\theoremstyle{plain}
\newtheorem{proposition}{Proposition}
\newtheorem{theorem}[proposition]{Theorem}
\newtheorem{lemma}[proposition]{Lemma}
\newtheorem{remark}[proposition]{Remark}
\newtheorem*{proposition*}{Proposition}
\newtheorem*{theorem*}{Theorem}
\newtheorem*{corollary*}{Corollary}
\newtheorem*{lemma*}{Lemma}
\newtheorem*{remark*}{Remark}
\newtheorem*{example*}{Example}
\newcommand{\Z}{\mathbb{Z}}
\newcommand{\Q}{\mathbb{Q}}
\newcommand{\R}{\mathbb{R}}
\newcommand{\C}{\mathbb{C}}
\begin{document}

\title{Quantum Master Equation and Open Gromov-Witten Theory 2}
 
\author{Vito Iacovino}


\email{vito.iacovino@gmail.com}

\date{version: \today}


\begin{abstract}

We define the not abelian Open Gromov-Witten potential.

\end{abstract}

\maketitle

\section{Introduction}

Let $X$ be a Calabi-Yau simplectic six-manifold and let $L$ be a Maslov zero Lagrangian submanifold of $X$.
In \cite{MCH} we introduced the abelian Multi-curve chain complex of $L$, which is defined in terms of  certain decorated graphs. Each vertex of the graph is decorated by an Euler characteristic and a degree, the order of the half-edges attached to the vertex is not fixed.   The relations defining $MC$-cycles are in correspondence with the constrains  of the perturbation of the moduli space of (multi-)pseudo-holomorphic-curves. In \cite{MCH} ,  a $MC$-cycle $Z_{\beta}^{ab}$  is associated to each $\beta \in H_2(X,L)$, well defined up to isotopies  :
$$ \text{Moduli space of (multi-)curves in the class  }\beta \leadsto Z_{\beta}^{ab}/isotopies .$$

The definition of not-abelian  $MC$-chain complex can be made in a similar way using not abelian decorated graphs.  These graphs are defined in terms of components decorated by a genus and a degree, each vertices belong to a component,  the set of half-edges attached to a vertex is equipped with a cyclic order. 
 From the perturbation of the moduli space of multi-curves defined in \cite{MCH}  we actually obtain the not-abelian Gromov-Witten $MC$-cycle $Z_{\beta}^{not-ab}$
$$ \text{Moduli space of (multi-)curves in the class }\beta \leadsto Z_{\beta}^{not-ab} / isotopies .$$


If the decoration of the degree of the components is forgotten, the $MC$-chain complex reduces  to a mathematical formulation of the \emph{point-splitting} Perturbative Chern-Simons ($PSPCS$).
A $MC$-cycle may be considered as the analogous in $PSPCS$ of the configurations space of the points used in the standard approach to $PCS$. 
The relations defining a $MC$-cycle are necessary for a consistent integration compatible with the singularity of the Chern-Simons propagator. A choice of a frame of the manifold (or of a link) picks a particular $MC$-cycle, well defined up to isotopy (see \cite{PSPCS}), which we call coherent cycle.  

The picture arising in $PSPCS$ is quite different from the one arising in the standard 
perturbative Chern-Simons. In $PCS$ the space of configurations of points is canonically defined and the frame is introduced to define correction terms necessary to cancel the so called anomalies (see \cite{CS}).  
In contrast, in $PSPCS$ there are not anomalies but the  configuration space of the points depends on the choice of a frame.

In open Gromov-Witten theory the picture is reacher than the picture of $PSPCS$.
When  the degree of the component of the decorated graphs is included,
$MC$-cycles associated to different degrees are related by what we call factorization property.  This allows us to write the Partition function as the exponential of the Gromov-Witten potential, which is a solution of Quantum Master Equation defined up to master isotopy. In contrast, in $PCS$ to a Wilson loop is associated  an  observable of $QME$ (see \cite{CS}).  
Roughly, the factorization property is related to the standard fact that contribution of unconnected graphs is obtained from the product of its connected components.
However, in $PSPCS$ transversality destroys the product structure of the graphs making this claim more delicate.

\section{Multi-Curve Chain Complex}

In  this section we define Multi-Curve Chain Complex associated to the following data: 
\begin{itemize}
\item A compact oriented three manifold $M$,
\item a finite-rank abelian group $\Gamma$, called \emph{topological charges},
\item an homorphism of abelian groups
$$ \partial : \Gamma \rightarrow H_1(M, \Z) $$
called boundary homomorphism.
\item an homomorphism of abelian groups 
$$ \omega : \Gamma \rightarrow \R $$
called \emph{symplectic area}.
\end{itemize}

\subsection{Decorated Graphs} \label{section-graphs}
A decorated graph $G$ consists in an array 
$$(Comp, (V_c, D_c, \beta_c ,g_c )_c, (H_v)_v, E)$$ 
where 
\begin{itemize}
\item  A finite set $Comp(G)$, called set of components of $G$;
\item To each $c \in Comp(G)$ are assigned 
\begin{itemize}
\item a finite set $V_c$, called  vertices of $c$;
\item a finite set $D_c$, called degenerate vertices of $c$;
\item a class $\beta_c \in \Gamma$, called charge of $c$
\item a positive integer numbers $g_c \in \Z_{\geq 0}$, called  genus of $c$.  
\end{itemize}
Set
$$\beta(G) :=  \sum_{c \in Comp(G)} \beta_c \in \Gamma , V(G) := \sqcup_{c \in Comp(G) } V_c , D(G)= \sqcup_{c \in Comp(G)} D_c ;$$ 
\item  To each $v \in V(G)$ is assigned a cyclic ordered finite set $H_v$. Define $H(G)= \sqcup_{v \in V_G} H_v$ the set of half-edges of $G$.
\item  $E(G)$ is a partition of $H(G)$ in sets of cardinality one or two, called set of edges $G$. The sets of cardinality two are called internal edges $E^{in}(G)$, the sets of cardinality one are called external edges $E^{ex}(G)$;
\end{itemize}

We assume that
$$ \beta_c \in \Gamma_{tors} \Rightarrow \beta_c=0 $$


A component $c$ is called unstable if $\beta_c = 0$ and $2 \chi_c - | H_c | \geq 0 $, where $\chi_c= 2-2g_c - |V_c|$.
The graph $G$ is called stable if each of its components is stable.

Fix a positive real number $C^{supp}$ and a norm  $ \parallel \bullet \parallel$ on $\Gamma_{\R}= \Gamma \otimes  \R$.
 Denote by $ \mathfrak{G}(\beta, \kappa,C^{supp})$ the set of stable decorated graphs with topological charge $\beta$ with $|E^{ex}(G)| - \chi(G)= \kappa$ and
 \begin{equation} \label{support}
\parallel \beta_c \parallel \leq C^{supp}  \omega(\beta_c)
\end{equation}
 for each  $ c \in Comp(G)$.

Observe that $\mathfrak{G}(\beta, \kappa, C^{supp}) $ is a \emph{finite} set.
In the following  we fix the constant $C^{supp}$ and we omit the dependence on  $ \parallel \bullet \parallel$ and $C^{supp} $ in the notation. 



In the next subsection, for each $e \in E^{in}(G) \sqcup D(G)$ it is defined a new graph $\delta_e G$.  The operation $\delta_e$ associated two different edges commute:
$$   \delta_{e_1} \circ \delta_{e_2} G =  \delta_{e_2} \circ \delta_{e_1}  G \text{   for each   }  e_1 , e_2 \in E^{in}(G) \sqcup D(G).$$
Given a set of edges $\{e_1,e_2,...,e_n\} \subset E^{in}(G) \sqcup D(G)$ we denote by $G/\{ e_1,e_2,...,e_n \} $ the graph that we get applying all the $\delta_{e_i}$ to $G$: 
$$  G/\{e_1,e_2,...,e_n\} =  \delta_{e_1} \circ \delta_{e_2} \circ ...  \circ \delta_{e_n} (G) . $$

The graph
$$ \Sigma_G =  G / \{ E^{in}(G ) \sqcup D(G) \} $$ 
has only external edges ($E^b(\Sigma_G)= E^{in}(G)= \emptyset $) , $\beta(G)= \beta(\Sigma_G)$ and $E^{ex}(G)=E^{ex}(\Sigma_G)$. We can identify $\Sigma_G$ with a (not necessarily connected) surfaces with boundary marked points $E^{ex}(G)$. 
Define  $g(G)= g(\Sigma_G )$ the genus of $G$, $h(G) = |V(\Sigma_G)| $ the number of boundary components of $G$.

For a decorated graph $G$ we denote by $\mathfrak{G}(G_0)$ the set of pairs $(G,E)$ where $G$ is a decorated graphs , $E \subset E^{in}(G) \sqcup D(G)$ with  $G/E \cong G_0$, modulo equivalence:
\begin{equation} \label{sub-graphs}
 \mathfrak{G}(G) =\{  (G',E') | G' \in \mathfrak{G},  E' \subset E^{in}(G') \sqcup D(G'),  G'/E' \cong G \}/\sim 
\end{equation}





\subsubsection{Operation $\delta_e$}
To a graph $G$ and $e \in E^{in}(G) \sqcup D(G)$ it is associated a graph $\delta_e G$  as follows.

We first consider the case $e \in D(G)$.
Let $c_0 \in Comp(G)$ be the component such that $e \in E_{c_0}$.  $\delta_e G$ is defined discarding $e$ from $E_c$ and adding to $V_{c}$ a new vertex $v_e$, with $H_{v_e} = \emptyset$. 
 All the other data defining $G$ stay the same.

We now consider the case $ e \in E^{in}(G)$
Let $e=\{ h_1,h_2 \} \in E^{in}(G)$ be an internal edge of $G$. 
We have different cases:
\begin{itemize}
\item
Assume $h_1 \in H_{v_1}$ and $h_2 \in H_{v_2}$ for $v_1 , v_2 \in V(G)$ with $v_1 \neq v_2$.  Define ordered sets $I_1$ and $I_2$ such that $H_{v_1 }= \{ h_1 , I_1  \}$ and  $H_{v_2 }= \{ h_2 , I_2  \}$ as cyclic ordered sets. $V(\delta_e G)$ is defined by replacing in $V(G)$ the vertices $v_1$ and $v_2$   by a unique vertex $v_0$, with $ H_{v_0} =  \{ I_1 , I_2  \} .$

\begin{itemize}
\item If  $v_1 \in V_{c_1}$, $v_2 \in V_{c_2}$ for  $c_1 , c_2 \in Comp(G)$ with $c_1 \neq c_2$, $Comp(\delta_e G)$ is obtained replacing in $Comp(G)$ the components $c_1$ and $c_2$ with a unique component $c_0$. $V_{c_0}$ is obtained by $V_{c_1} \sqcup V_{c_2} $ replacing the vertices $v_1$ and $v_2$ with $v_0$. $E_{c_0}=E_{c_1} \sqcup E_{c_2}$, $  g_{c_0} = g_{c_1}+ g_{c_2}$.
\item If  $v_1,v_2 \in V_{c_0}$ for some  $c_0  \in Comp(G)$, set $Comp(\delta_e G) =Comp(G)$ with the genus $g_{c_0}$ increased by one and all the other data of $c_0$ remain the same. 
\end{itemize}
\item
Assume $h_1,h_2 \in H_{v_0}$ for $v_0 \in V(G)$. Write the cyclic order set $H_{v_0 }$ as $H_{v_0 }= \{ h_1 , I_1 , h_2, I_2 \}$ for some order sets $I_1$ and $I_2$.  $V(\delta_e G)$ is given by $V(G)$ replacing $v_0$ by two vertices $v_0', v_0''$ with
$ H_{v_0'} =  I_1 \text{    }  H_{v_0''} =  I_2 .$

Set $Comp(\delta_e G) = Comp(G)$. Let $c_0 \in Comp(G)$ such that $v_0 \in Comp_{c_0}$. $V_{c_0}$ in $\delta_e G$ is obtained replacing $v_0$ with $v_0', v_0''$,   and all the other data of $c_0$ remain the same. 
\end{itemize}

\subsection{Multi Curve Chain Complex}

We now define the$MC$-chains complex $(\{ \mathcal{C}_d \}_d,\hat{\partial})$. We need to introduce some notation.

In  this paper by chain we we always mean smooth chains  up to triangulations  and reparametrizations. 


\begin{remark} \label{orbitfold}
We  need to consider chains on global orbitfolds.

Let $X$ be a manifold, and let be $G$ a finite group that acts on $X$. Moreover assume that we have a finite set $\mathfrak{o}$ on which $G$ acts. 
 $\mathfrak{o}$ defines  a  local system on the global orbit-fold $X/G$.  The chains on $X/G$ with coefficients on  $\mathfrak{o}$ can be identified with the $G$-invariant chains on $X$:
$$C_*(X/G, \mathfrak{o}) =( C_*(X)  \otimes  \mathfrak{o})^G.$$
To an homomorphism of groups $h: G_1 \rightarrow G_2$ and an equivariant   smooth map  $f : (X_1, \mathfrak{o_1} ) \rightarrow (X_2, \mathfrak{o_2})$ it is associated  a map of orbit-folds $(X_1, \mathfrak{o_1} )   /G_1 \rightarrow (X_2, \mathfrak{o_2} )/G_2$. The induced map on the chains is given by 
\begin{equation} \label{maps-orbitfold}
 ( C_*(X_1)  \otimes  \mathfrak{o_1})^{G_1} \rightarrow ( C_*(X_2)  \otimes  \mathfrak{o_2})^{G_2} 
\end{equation}
$$ C  \mapsto \frac{1}{|G_1|} \sum_{g_2 \in G_2} (g_2)_*(f_*(C)).$$
\end{remark}


For finite  set $S$,  denote by $\mathfrak{o}_S$ the set of ordering of $S$ up to parity: 
$$\mathfrak{o}(S) = \frac{\text{ordering of }S}{\text{even permutations of  }  S}.$$

For a decorated graph $G$ and $e \in E^{in}(G)$,  denote by $\pi_e : M^{H(G)} \rightarrow M \times M $ the projection to the components associated to $e$ and by $Diag$ the diagonal of $M \times M$.





Fix the data $Z_{Ann0}, Z^{\clubsuit}$ as in Section \ref{forget-MC-section}.

A  dimension $d$ $MC$-chain  $C \in \mathcal{C}_d(\beta)$  consists in a collection  of chains
\begin{equation} \label{MC-chains}
    C=  (  C_{G,m}  )_{(G,m) \in \mathfrak{G}_*(\beta)} ,
\end{equation}
where, for each $G.m$, 
$$C_{G,m} \in C_{|H(G)| + |m| +d}(M^{H(G)}, \mathfrak{o}_{H(G)}  )^{Aut(G,m)}.$$ 
Here we  use the notation of  Remark \ref{orbitfold}.

We quotient the space of $MC$-chains by the following equivalence relation. 
For each $(G,m) \in \mathfrak{G}_*(\beta)$ and $e \in D(G)$,  the equivalence relation set to zero each $MC$-chain with      support on $ (\delta_e G,m) , (G,m)$ and
$$  C_{\delta_e G,m} + C_{G,m}= 0 .$$


We require the following properties:
\begin{enumerate}
\item  
$C_{G,m}$ is transversal  to   $ \cap_{e \in E'} \pi_e^{-1}(Diag)$ for each subset $E' \subset E^{in }(G)  \setminus E^l$;
\item  the forgetful compatibility holds in the sense of subsection \ref{forget-MC-section}.
\item $C_{G,m}= C_{cut_{E_0}G,m}$, where $cut_{E_0}G$ is the graph obtained cutting the edges $E_0$. Hence $E^{in }(cut_{E_0}G)= E^{in}(G) \setminus E_0, H(cut_{E_0}G)= H(G)$.  
\end{enumerate}


The operator $\hat{\partial}$ is defined by
\begin{equation} \label{derivation0}
\hat{\partial} = \partial + \delta + \eth : \mathcal{C}_{d}(\beta)  \rightarrow \mathcal{C}_{d-1}(\beta)
\end{equation}
where:
\begin{itemize}
\item $ \partial: \mathcal{C}_{d}(\beta) \rightarrow \mathcal{C}_{d-1}(\beta)$  is the usual boundary operator on the chains;
\item 
$$(\delta C)_{(G,m)}= \sum_{(G',m')| \delta_{e'} (G',m')= (G,m) } \delta_{e'} C_{(G',m')} $$
where $ \delta_{e'} C_{(G',m')}$ is defined in subsection \ref{operator-delta}.

\item 
$$ (\eth  C)_{(G,m)} = (-1)^{d+1} \sum_{0 \leq i \leq l} (-1)^i C_{(G,\partial_i m)}.$$
For each $i$, $C_{(G,\partial_i m)}$ it is understood as element of  $C_*(M^{H(G)})^{\text{Aut}(G,m) }$ using the map $C_*(M^{H(G)})^{\text{Aut}(G, \partial_i m) } \rightarrow C_*(M^{H(G)})^{\text{Aut}(G,m) }$.

\end{itemize}

It is easy to check that 
$$\partial^2=0, \delta^2=0, \eth^2=0, \partial \delta + \delta \partial =0, \partial \eth + \eth \partial =0,  \delta \eth + \eth \delta =0 .$$ 
Hence 
$$\hat{\partial}^2=0  .$$

\subsubsection{Operator $\delta_e$} \label{operator-delta}

Let $(G,m) \in \mathfrak{G}_l$ and $e \in E(G) \setminus E_l$. 
For $C \in C_*(M^{H(G)})^{\text{Aut}((G,m))}$ we define the chain $\delta_e C$ as follows.

Let $\text{Aut}((G,m),e) < \text{Aut}(G,m)$ be the group of automorphims of $(G,m)$  fixing the edge $e$.
Consider the chain $C \cap Diag_e$  as an element of  $C_*(M^{H(G)})^{\text{Aut}((G,m),e) }$.
The orientation of $C \cap  \pi_e^{-1} (Diag) $ is defined according to the relation $ T_*C = N_{Diag}(M \times M) \oplus T_*( C \cap  \pi_e^{-1} (Diag))  $, where $N_{Diag}(M \times M) \subset T_*(M \times M)$ is the normal bundle to the diagonal.


There is an homormorphism of groups  $\text{Aut}((G,m),e) \rightarrow \text{Aut}((\delta_e G,m))$ which, together the projection $M^{H(G)} \rightarrow M^{H(\delta_e(G))}$, induces a map of global orbitfold
$$pr: M^{H(G)}/\text{Aut}((G,m),e) \rightarrow M^{H(\delta_e G)}/ \text{Aut}((\delta_e G,m)).$$

Using (\ref{maps-orbitfold}), set
 $$\delta_e C = - pr_*( C \cap Diag_e ).$$


\subsubsection{Isotopies}

We can define the one parameter version $\tilde{\mathcal{C}}_*$ of the $MC$-chain complex. 

An element $\tilde{C} \in \tilde{\mathcal{C}}_d$ consists in a collection of chains 
\begin{equation} \label{collection-chains-isotopy}
\tilde{C}_d=  ( \tilde{C}_{G,m} )_{G,m} ,
\end{equation}
with
$$\tilde{C}_{G,m} \in C_{|H(G)| + |m|+d +1} (\R \times M^{H(G)} , \mathfrak{o}_{H(G)})^{\text{Aut}(G,m)}  .$$
Here we consider Borel-Moore chains.
We require that
\begin{enumerate}
\item 
For each subset $E' \subset E^{in }(G)  \setminus E^l$, $\tilde{C}_{G,m} $ is transversal  to   
$ \sqcap_{e \in E'} \pi_e^{-1}(Diag ) \times [0,1]$;
\item forgetful compatibility holds. 
\item $ \tilde C_{G,m}= \tilde C_{cut_{E_0}G,m}$.
\end{enumerate}

To define forget-compatibility for the collection of chains  (\ref{collection-chains-isotopy})  we need to consider the lift of the multi-loop space
$$\tilde{\overline{C}}_{G,m} \in  C_*( \tilde{\mathfrak{L}}_G(M) )  $$
where $\tilde{\mathfrak{L}}_G(M)  =\prod_{v \in V(G)} \mathfrak{L}_{H_v}(M) \times [0,1]$. 
 Analogously to $\mathfrak{L}_G(M)$, to define the notion of chain on $\tilde{\mathfrak{L}}_G(M)$ we need to define the notion of chain on   $ \mathfrak{L}_S(M) \times [0,1]$ for any finite set $S$.  
A generator consists in an array $(N, (\tilde{w},(\tilde{t}_{i})_i,\mathfrak{t} ))$ with
\begin{subequations} \label{not-constant-simplex-iso}
\begin{align}
(\tilde{t}_{i})_i: N \rightarrow Conf_S(\mathbb{S}^1)  \label{delta-configuration-iso} \\
\tilde{w}_{}: \overline{Conf}_S^+(\mathbb{S}^1) \times_{ \overline{Conf}_S(\mathbb{S}^1)   } N \rightarrow M \label{delta-loop-iso} \\
\mathfrak{t} : N \rightarrow [0,1]
\end{align}
\end{subequations}

The operators $\partial, \delta, \eth$ are extended straightforwardly. 

Given two $MC$-cycles $Z_0$ and $Z_0$. An isotopy of $MC$-cycles  between $Z_0$ and $Z_1$ is an element of  $\tilde{Z} \in \tilde{\mathcal{C}}_0$ such that 
$$ \hat{\partial}\tilde{Z} =0 ,$$
$$ \tilde{Z}^{<-T} = \R_{<-T} \times  Z_0,$$
$$ \tilde{Z}^{>T} = \R_{>T} \times  Z_1,$$
for $T$ positive real big enough. 
 
Isotopy of isotopies of $MC$-cycles  can be defined analogously  taking two parameter families of chains instead of one parameter families.
Hence an isotopy of  isotopies of $MC$-cycles  $\tilde{\tilde{Z}}$ consists in a collection of chains 
$(\tilde{\tilde{Z}}_{G,m})_{G,m}$ with 
$\tilde{\tilde{Z}}_{G,m} \in C_*(M^{H(G)} \times \R_t \times \R_s )$. As for isotopies, we require that  $\tilde{\tilde{Z}}$ is  $\hat{\partial}$-closed and satisfies forgetful compatibility.



\subsection{Forgetful Compatibility} \label{forget-MC-section}

To define forgetful compatibility we consider chains on the (multi-)loop space (see section \ref{chain-loop-section}). Set
$$\mathfrak{L}_{G}(M) =  \prod_{v \in V(G)} \mathfrak{L}_{H_v}(M) .$$

A generator of a chain on $  \mathfrak{L}_{ G}(M)$ is defined by $(N, ( N^0_v )_v , (w_v, t_v )_v)$ , where, for each $v \in V(G)$, $(N,N^0_v, (w_v,t_v))$ is a generator of  $\mathfrak{L}_{ H_v}(M)$ as in section (\ref{chain-loop-section}).
We assume that the the manifolds $ \{ N^0_v \}_v$ are transversal.



Let $(G,m) \in \mathfrak{G}_l$ and $e \in E^{ex}(G)$. We want to define a decorated graph $(G',m')= forget_e(G,m) \in \mathfrak{G}_l$  obtained removing the edge $e$.  The definition of $(G',m')$   is straightforward in the case that $G$ is stable after removing $e$ .  

Assume that $G$ becomes unstable after removing $e$.
Let $v \in V(G)$ and $c \in Comp(G)$ such that $v \in V_c$ and $e \in H_v$. We have $\beta_c=0$ and $g_c = 0$.
Let $G_e$ be the decorated graph defined by
$$\beta(G_e)= 0 , Comp(G_e)= \{  c \}, V(G_e)= V_c, D(G_e)= D_c,$$
$$ H(G_e)=H_c, E^{in}(G_e) = \{ e \in E^{in}(G)| e \subset H_c \} .$$
We have the following cases:
\begin{enumerate} \label{unstable-cases}
\item  $|V_c|=1, |D_c|= 0, |H_c| =3 ,  |E^{in}_c|=  0$; \label{disk}
\item  $|V_c|=1, |D_c|=0$,  $|H_c| =3 $,  $|E^{in}_c|=1$;  \label{disk-edge}
\item   $|V_c|=2, |D_c|=0$ , $|H_c| = 1 $;   \label{annelus} 
\item    $|V_c|=1, |D_c|=1$ ,$|H_c| = 1 $.   \label{annelus-deg}
\end{enumerate}

Denote by $Disk0$ the graph defined by  (\ref{disk}).
Let $Ann0$  be the set graphs given by (\ref{disk-edge}), (\ref{annelus}) and (\ref{annelus-deg}).

 We say that $e$ is not removable if $G_e$ is given by (\ref{disk-edge} )  and  $E^{in}_c \subset E_l$ . In all the other cases we say that $e$ is removable.


In the case (\ref{disk}), define $G'$ by removing the component $c$ and gluing the two elements of $H_v \setminus \{ e \}$. More precisely let $H(G_e)= \{ h_1, h_2, e \}$.  If there exists $h_2' \in H(G)$ such that $h_1 \in E^{ex}(G)$, $\{ h_2, h_2' \} \in E^{in}(G)  $, declare $h_2' \in E^{ex}(G')$. 
If there exists $h_1', h_2' \in H(G)$ such that $\{ h_1, h_1' \} \in E^{in}(G)$, $\{ h_2, h_2' \} \in E^{in}(G)  $
set $\{ h_1', h_2' \} \in E^{in}(G')$.
If $\{ h_1, h_1' \} \in E_i,  \{ h_2, h_2' \} \in E(G) \setminus E_{i-1} $  set   $\{ h_1', h_2' \} \in E_i$. 

In the cases $G_e \in Ann0$,
$G_e$ is a connected component of $G$ and $G'$ is defined removing $G_e$ from $G$.


We can consider the truncation of the $MC$-chain complex to $Ann0$. Denote by $ \mathcal{Z}_{Ann0}$ the associate  space of $MC$-cycles. The isotopy classes of $ \mathcal{Z}_{Ann0}$ are in bijection with the homology classes of Euler Structure of $M$ (see \cite{Boundary-States}):
\begin{equation}    \label{isotopy-classes-Ann}
    \mathcal{Z}_{Ann0}/isotopy \cong \mathfrak{Eul}(M)^{\blacktriangle}.
\end{equation}


Let  $\mathfrak{G}^{\clubsuit}$ be the set of decorated graphs whose connected components are  isomorphic to (\ref{disk-edge}) in the list above.  We can consider the truncation of the $MC$-chain complex to  $\mathfrak{G}^{\clubsuit}_*$. Fix a  cycle $Z^{\clubsuit}= (\overline Z_{G,m})_{(G,m)  \in \mathfrak{G}^{\clubsuit}_*}$ such that
$$  \overline Z_{G,m} =  \overline Z_{G',m'} \times  \overline Z_{G_e}  $$
if $e $ is a removable external edge.

Observe that $Z^{\clubsuit}$ is unique up to isotopy.


Fix the data $Z_{Ann0}, Z^{\clubsuit}$.

The chains (\ref{MC-chains}) are said forget compatible if there exists a collection of chains 
\begin{equation} \label{chains-loops}
\overline{C}_{G,m} \in  C_*(\mathfrak{L}_{G}(M) )  .
\end{equation}
such that for each $(G,m)$
\begin{equation} \label{chains-ev}
C_{G,m}= \text{ev} ( \overline{C}_{G,m}) 
\end{equation}
and for each $e \in E_0$ removable the following happen:
\begin{itemize}
    \item If $G$ is stable after removing $e $  we require that
    $$ \overline{C}_{G,m} = \overline{Z}_{G',m'} \times_{\textbf{forget}_e} \mathfrak{L}_G( M)  .$$
 \item In the case $G$ is unstable after removing $e$  we require that:
 $$\overline{C}_{G,m}=\overline{C}_{G',m'} \times \overline{Z}_{G_e} .$$
\end{itemize}




Assume that there are not external removable external edges. Let  $G^{\clubsuit}$ be the subgraph of $G$ which is the union of the connected components isomorphic to (\ref{disk-edge}). 
Write 
$ (G,m) = (G',m') \sqcup (G^{\clubsuit}, m^{\clubsuit} ) $.  $G'$ is a subgraph of $G$ without external edges.
We require that
$$ \overline{C}_{G,m} =  \sum_{0 \leq r \leq l}  \overline{C}_{G', m'_{[0,r]}} \times  \overline{Z}_{G^{\clubsuit}, m^{\clubsuit}_{[r,l]}}.   $$

\begin{remark}
A collection of chains $(\ref{chains-loops})$ which satisfies the above property, if it exists, is uniquely determined by the collection of chains (\ref{MC-chains}). 
\end{remark}


\subsubsection{Extension of $\hat{\partial} $ to multi-loops}

We now extend the operator $\hat{\partial}$ to $\overline{\mathcal{C}}$.
For this we need to extend the operators $\partial , \delta, \eth$.

The operator $\partial$ is extended straightforwardly on $ C_*( \mathfrak{L}_{ G}(M))$. However we need to be careful about the boundary faces associated to constant loops, i.e., the boundary face $N_0$ appearing in  (\ref{delta-loop0}). 
Let  $v_0 \in V(G)$ with $H_{v_0} = \emptyset$.
Let $G'$ and $v' \in D(G')$ such that $\delta_{v'}G' =G$. 
The boundary face 
associated to the boundary face $N_{0,v_0}$  is identified with its image by the projection  $C_*(  \mathfrak{L}_{ G}(M) ) \rightarrow C_*(  \mathfrak{L}_{ G'}(M) )$ .

In order to define $\delta_e$ we observe that, from the forgetful compatibility follows that there exists a unique collection of chains
\begin{equation} \label{chains-loops-edge}
\{ \delta_e \overline{C}_{G,m} \}_{G,m,e}  .
\end{equation}
where $e \in E(G) \setminus E_l$, 
$\delta_e \overline{C}_{G,m} \in  C_*(  \mathfrak{L}_{\delta_e G}(M) )$  ,
such that 
\begin{equation} \label{chains-delta}
\delta_e C_{G,m}= \text{ev} ( \delta_e \overline{C}_{G,m}) 
\end{equation}
for each $(G,m,e)$.
Relation (\ref{chains-delta}) defines the operator $\delta$ on the chains of loops (\ref{chains-loops}). It can be seen as the higher genus generalization of the topological string bracket.

The operator $\eth$ is extended straightforwardly.


We have
$$  \hat{\partial} \overline{Z} =0 \Longleftrightarrow \hat{\partial} Z =0 .$$

\subsection{Chains on loops space} \label{chain-loop-section}

Denote by $Map(\mathbb{S}^1,M) $ the set of piecewise smooth maps between the circle $\mathbb{S}^1$ and $M$. 
 For a cyclic ordered fined set $S$, denote by
 $Conf_S(\mathbb{S}^1)$ the set of injective maps between $S$ and $\mathbb{S}^1$ respecting the cyclic order.

The set $  \mathfrak{L}_S(M) $ of loops with marked points labeled by $S$ is defined by 
$$  \mathfrak{L}_S(M) = (  Map(\mathbb{S}^1,M) \times Conf_S(\mathbb{S}^1) )/(Diff^+(\mathbb{S}^1) ) .$$

Denote by $\mathfrak{L}_S^0(M) \subset  \mathfrak{L}_S(M)$  the subset of constant loops.

In the case $S= \emptyset$, we denote $  \mathfrak{L}(M)  =   \mathfrak{L}_{\emptyset}(M)$.

 



The space   $\mathfrak{L}_S(M)$ comes with the evaluation map on the marked  points 
$$ ev_S : \mathcal{L}_SM \rightarrow M^S .$$

Let $Conf_S^+(\mathbb{S}^1)'$ be the space of injective maps between $S \sqcup \{  \star  \}$  to $\mathbb{S}^1$ respecting the cyclic order of $S$, and let $\overline{Conf}_S^+(\mathbb{S}^1)'$ be its compactification, which 
is a manifold with corners. $\overline{Conf}_S^+(\mathbb{S}^1)'$ has $|S|$ connected components  corresponding to the position of $\star$ with respect of $S$.  Let $\overline{Conf}_S^+(\mathbb{S}^1)$ be the manifold with boundary and corners defined attaching for each $s \in S$ the boundary components  of $\overline{Conf}_S^+(\mathbb{S}^1)'$ corresponding  to the collision of the pair points $\{ s, \star \}$ and $\{ \star ,s \}$. 
The forget map $\overline{Conf}_S^+(\mathbb{S}^1) \rightarrow \overline{Conf}_S(\mathbb{S}^1)$ is a $\mathbb{S}^1$-fibration, which is trivial if $S \neq \emptyset$, i.e.,
$$Conf_S^+(\mathbb{S}^1)  \cong Conf_S(\mathbb{S}^1) \times \mathbb{S}^1 .$$

\subsubsection{k-simplexes on $ \mathfrak{L}_S(M)$}


We now want to consider  $k$-chains on $ \mathfrak{L}_S(M)$.  A generator of a $k$-chain consists in a  pair $(N,(w, ))$ defined as follows. 

Let us first consider assume that the support of the chain does not intersect $ \mathfrak{L}_S^0(M)$.
 A generator of a $k$-chain consists in a  pair $(N,(w, (t_i)_i ))$ where
\begin{itemize}
    \item  $N$ is a compact oriented $k$-manifold with corners
    \item  $(t_i)_i: N \rightarrow Conf_S(\mathbb{S}^1) \label{delta-configuration}$
    \item $w: \overline{Conf}_S^+(\mathbb{S}^1) \times_{ \overline{Conf}_S(\mathbb{S}^1)   } N \rightarrow M $ \label{delta-loop}
\end{itemize}
    We assume $(w, (t_i)_i ) $ are continuous and piecewise smooth.




To include constant loops we modify the definition of $(N,(w, (t_i)_i ))$ as follows.
Assume first $S \neq \emptyset$. We have
\begin{itemize}
    \item a   sub-manifold $N_0 \subset N$ of codimension one intersecting transversally the boundary of $N$;
    \item $(t_i)_i : \hat{N} \rightarrow Conf_S(\mathbb{S}^1) \label{delta-configuration1}$;
    \item  $w : \overline{Conf}_S^+(\mathbb{S}^1) \times_{ \overline{Conf}_S(\mathbb{S}^1)   } \hat{N} \rightarrow M $\label{delta-loop1}
\end{itemize}
where $\hat{N} $ is the differential blow-up of $N$ along $N_0$.
Let $\hat{N}_0 $ be the pre-image of $N_0$ by the blow-down map $\hat{N} \rightarrow N$.
By definition $ \hat{N}_0$ comes with an action of $ \mathbb{S}^1$ with $\hat{N}_0/  \mathbb{S}^1 = N_0$.
We assume that the restriction to  $\hat{N}_0 $ of $(w, (t_i)_i )$ are  $\mathbb{S}^1$-equivariant. Here we consider the obvious  $\mathbb{S}^1$-action is on $ \overline{Conf}_S^+(\mathbb{S}^1)$ and $ \overline{Conf}_S(\mathbb{S}^1) $ and the trivial $\mathbb{S}^1$-action on $M$.

Using the evaluation map, to $(N,(w, (t_i)_i ))$ it is associated a map $N  \rightarrow M^S$ given by 
$z \mapsto w((t_i(z))_i, z)$. We use the  
 $\mathbb{S}^1$-equivariance  in order to blow-down the map from $\hat{N}$ to $N$.

In the case $S = \emptyset$, we assume that 
$N_0 $ is a boundary face of $N$ and 
\begin{equation} \label{delta-loop0}
w: N \times \mathbb{S}^1 \rightarrow M .
\end{equation}
We assume that the restriction of $w$ to 
$N_0 \times \mathbb{S}^1 $ is constant along the $\mathbb{S}^1$-direction.

A $k$-chain is a formal linear combinations of the objects $(N,(w, (t_i)_i ))$. We consider the equivalence relation given by:
$$(N,(w, (t_i)_i )) \cong (N^1,(w^1, (t_i^1)_i )) +(N^2,(w^2, (t_i^2)_i ))$$    
if  $ N= N^1 \sqcup_{P} N^2$,  $(w, (t_i)_i ) = (w^1, (t_i^1)_i )) \sqcup_{P} (w^2, (t_i^2)_i )$ for some $k-1$-manifold $P$ identified with  a boundary face of $N_1$  and  $N_2 $.

\subsubsection{Forgetting map}

Consider a set $S' = S \sqcup {s_0} $. There is forgetting map
$$ \mathbf{forget}_{s_0}:  \mathfrak{L}_{S'}(M) \rightarrow   \mathfrak{L}_{S}(M)  $$ 
which should be considered as a fibration whose fibers are closed intervals if $S' \neq \emptyset$, or circles $\mathbb{S}^1$ if $S' = \emptyset$.
The precise meaning of this statement is that to each $k$-chain in $\mathcal{L}_{S'}(M) $ corresponds a $(k+1)$-chain on $\mathcal{L}_{S'}(M) $, which we can consider as the pull-back chain .
This chain can be explicitly described as follows.

We asssociate to  $(N,(w, (t_i)_i ))$ in $ \mathfrak{L}_{S}(M)$ we can associate a $k+1$-family $(N',(w', (t_i')_i ))$ in  $ \mathfrak{L}_{S'}(M)$ as follows.

Consider first the case without constant loops.
Set
\begin{equation} \label{pull-back-delta}
N' = \overline{Conf}_{S'}(\mathbb{S}^1) \times_{ \overline{Conf}_S(\mathbb{S}^1)   } N  .
\end{equation}
The map 
$$(t_i')_i  :   N' \rightarrow Conf_{S'}(\mathbb{S}^1)  $$
 is defined by the projection on the first factor.
The map
$$w': \overline{Conf}_{S'}^+(\mathbb{S}^1) \times_{ \overline{Conf}_{S'}(\mathbb{S}^1)   } N'  \rightarrow M $$
 is defined   by 
  the isomorphism
 $$ \overline{Conf}_{S'}^+(\mathbb{S}^1) \times_{ \overline{Conf}_{S'}(\mathbb{S}^1)   } N' = \overline{Conf}_{S'}^+(\mathbb{S}^1) \times_{ \overline{Conf}_S(\mathbb{S}^1)   }  N,$$ 
 the forget  map 
  $ \overline{Conf}_{S'}^+(\mathbb{S}^1) \rightarrow   Conf_{S}^+(\mathbb{S}^1)  $ 
  and applying $w$.

Now consider chains which intersect the space of constant loops.  
\begin{equation} \label{pull-back-delta-2}
\hat{N}' = \overline{Conf}_{S'}(\mathbb{S}^1) \times_{ \overline{Conf}_S(\mathbb{S}^1)   } \hat{N}  .
\end{equation}
$$\hat{N}_0'  := \overline{Conf}_{S'}(\mathbb{S}^1) \times_{ \overline{Conf}_S(\mathbb{S}^1)   } \hat{N}_0 \subset \hat{N}' $$
The maps 
\begin{subequations} 
\begin{align}
(t_i')_i : \hat{N}' \rightarrow Conf_{S'}(\mathbb{S}^1) \label{delta-configuration-pull-back-2} \\
w':  \overline{Conf}_{S'}^+(\mathbb{S}^1) \times_{ \overline{Conf}_{S'}(\mathbb{S}^1)   } \hat{N}'  \rightarrow M \label{delta-loop-pull-back-2}
\end{align}
\end{subequations}
are defined 
as before.
The sub-manifold $\hat{N}_0' $ has the $\mathbb{S}^1$-action induced from the one on $ \overline{Conf}_{S'}(\mathbb{S}^1)$ and  $\hat{N}_0$.
Define $N'$ as the quotient of $\hat{N}'$ with respect the $\mathbb{S}^1$-action. 
 The maps (\ref{delta-configuration-pull-back-2}) ,  (\ref{delta-loop-pull-back-2}) restricted to $\hat{N}_0'$ are  $\mathbb{S}^1$-equivariant.

Finally consider (\ref{delta-loop0}). Set 
\begin{equation} \label{pull-back-delta-0}
\hat{N}' = \overline{Conf}_{s_0}(\mathbb{S}^1) \times N.
\end{equation}
$$\hat{N}_0' = \overline{Conf}_{s_0}(\mathbb{S}^1) \times N_0.$$
Define $N'$ as the quotient of $N$ with respect the $\mathbb{S}^1$-action on $\hat{N}_0' $ . 
The definition of $s'$ is similar to the case above.





\subsection{Open Gromov-Witten $MC$-cycle}

Let $(X,L)$ be a pair given by a Calabi-Yau simplectic six-monidold $X$ and a Maslov index zero lagrangian submanifold $L$. 
We assume $[L] =0 \in H_3(X, \Z)$. 
Fix  a four chain $K$ with $\partial K =L$.

To the four chain $K$ it is associated an Euler Structure  
 $ [U_K] \in \mathfrak{Eul}(M)^{\blacklozenge}$  as follows (see \cite{Boundary-States}). Assuming transversality  between $L$ and $K$ ,  we can define a four chain $\hat{K} $ on the differential blow-up $\hat{X} $ of $X$ along $L$. Set 
$$  U_K=  \partial  \hat{K}.$$

Denote by $ \mathcal{Z}$ the vector space of $MC$-cycles on the manifold $L$.

\begin{theorem} \label{main-theorem} (\cite{MCH})
Let $\beta \in H_2(X,L, \Z)$. To the moduli space of pseudoholomotphic multi-curves of homology class $\beta$  it is associated a multi-curve cycle $Z_{\beta} \in \mathcal{Z}_{ \beta| U_K}$ of Euler  class $[U_K]$. $Z_{\beta}$ depends by the varies choices we made to define the  Kuranishi structure and its perturbation on the moduli space of multi-curves.
Different choices lead to isotopic $MC$-cycles.
\end{theorem}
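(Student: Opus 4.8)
The plan is to construct $Z_\beta$ by equipping the compactified moduli space of stable multi-curves in class $\beta$ with a coherent Kuranishi structure and a compatible abstract perturbation, and then to push the resulting perturbed fundamental chains forward along the boundary evaluation maps. First I would observe that a stable multi-curve has a \emph{combinatorial type} which is exactly a decorated graph $G$: the connected components of the domain give the elements of $Comp(G)$ (recording genus $g_c$ and charge $\beta_c$), the boundary circles give the vertices $V_c$, the boundary marked points give the half-edges $H_v$ with the cyclic order induced by the boundary orientation, the nodes give the internal edges $E^{in}(G)$, and the remaining evaluation points give $E^{ex}(G)$; ghost/constant configurations are recorded by $D(G)$. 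For each $(G,m)$ the evaluation at the half-edges gives a map $\overline{\mathcal{M}}_{G,m} \to M^{H(G)}$, and I would define $C_{G,m}$ as the pushforward of the perturbed fundamental chain, taken with coefficients in $\mathfrak{o}_{H(G)}$ and invariant under $\mathrm{Aut}(G,m)$, exactly as in (\ref{MC-chains}). The equivalence relation $C_{\delta_e G,m}+C_{G,m}=0$ for $e\in D(G)$ is built in by treating a degenerate vertex and its resolution to a constant loop as cancelling contributions.

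Second, I would choose the perturbation \emph{coherently} along the stratification. Over each stratum the multisection must be chosen so that (i) it is transverse, producing chains of the expected dimension; (ii) its restriction over a stratum indexed by $\delta_e G$ agrees with the induced perturbation over $G$, since forming a node is geometrically the operation of intersecting the evaluation image with $\pi_e^{-1}(Diag)$; and (iii) it is compatible with forgetting an external marked point, so that near a stratum where forgetting destabilises a disk or annulus the chain factors through the fixed data $Z_{Ann0}$ and $Z^{\clubsuit}$. Property (i) is the transversality requirement (1); property (ii) is precisely what makes $\delta_e C_{G,m} = -pr_*(C_{G,m}\cap Diag_e)$ geometric; and property (iii) is the forgetful compatibility of subsection \ref{forget-MC-section}. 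The cutting property (3) holds because cutting an edge changes neither $H(G)$ nor the evaluation. The Euler-class constraint is imposed through the boundary behaviour of the perturbation near the annulus strata $Ann0$: the four-chain $K$, equivalently $U_K=\partial\hat K$, fixes the framing that selects the coherent cycle in $\mathcal{Z}_{Ann0}$ via (\ref{isotopy-classes-Ann}), so that $Z_\beta\in\mathcal{Z}_{\beta|U_K}$.

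Third, I would prove $\hat\partial Z_\beta = 0$ by matching the codimension-one boundary of the perturbed moduli space with the three summands of $\hat\partial=\partial+\delta+\eth$. The ordinary boundary faces of the chains contribute $\partial$; the faces where a node degenerates, i.e.\ where two boundary points collide onto $Diag$, contribute $\delta$ through the operator of subsection \ref{operator-delta}; and the faces coming from the simplicial $m$-direction contribute $\eth$. Since the compactified moduli space has no other codimension-one boundary, every geometric degeneration being one of these three types, the total boundary of the perturbed fundamental chain vanishes as an $MC$-chain, which is the identity $\hat\partial Z_\beta=0$. This is exactly the point at which the relations defining $MC$-cycles in \cite{MCH} were engineered to absorb the singularity along $Diag$.

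Finally, independence of the choices up to isotopy is the parametrised version of the same construction: given two systems of Kuranishi data producing $Z_\beta^0$ and $Z_\beta^1$, I would interpolate over $\R$ to obtain a coherent perturbation of the moduli space times $\R$, whose perturbed chains assemble into $\tilde Z\in\tilde{\mathcal{C}}_0$ with $\hat\partial\tilde Z=0$ and the required behaviour $\tilde Z^{<-T}=\R_{<-T}\times Z_\beta^0$, $\tilde Z^{>T}=\R_{>T}\times Z_\beta^1$. The main obstacle throughout is \emph{coherence}: arranging a single perturbation that is simultaneously transverse, compatible with all the node-forming maps $\delta_e$, and compatible with every forgetful map, including the delicate non-abelian bookkeeping of the cyclic orders and the factorisation through $Z^{\clubsuit}$ near destabilising bubbles. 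As noted in the introduction, transversality destroys the naive product structure of disconnected graphs, so the forgetful and factorisation compatibilities must be installed by an inductive choice of perturbation over the poset of decorated graphs, which is the technical core of \cite{MCH}.
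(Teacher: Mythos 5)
The paper does not actually prove this theorem: it is imported verbatim from \cite{MCH} (note the citation attached to the statement), so there is no internal proof here to compare against line by line. Your sketch follows precisely the approach the statement and the surrounding framework presuppose --- combinatorial types of stable multi-curves as decorated graphs, a coherent Kuranishi perturbation pushed forward along evaluation maps, identification of the codimension-one boundary strata with the three summands of $\hat{\partial}=\partial+\delta+\eth$, the role of $Z_{Ann0}$, $Z^{\clubsuit}$ and $[U_K]$ in pinning down forgetful compatibility and the Euler class, and a parametrized ($\R$-family) version of the same construction for isotopy invariance --- so it is consistent with, and essentially a reconstruction of, the argument the paper delegates to \cite{MCH}; the one piece of the formalism your sketch does not really engage with is the filtration data $m=\{E_0\subset\cdots\subset E_l\}$ indexing the chains $C_{G,m}$, which in this framework records the ordering/scales at which the diagonal constraints are imposed by the perturbation and is what $\eth$ acts on, and carrying that bookkeeping through the inductive choice of perturbations is the technical core that a complete proof would have to supply.
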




\subsection{Nice Multi-Curve Cycles}






 

Let $\mathbf{w}= ( w_i )_{i \in I}$ be a multi-loop. We say that a  multi-loop $\mathbf{w}'= ( w_i' )_{i \in I'}$ is $\epsilon$-close to  $\mathbf{w}$ if there is an identification of $I$ with a subset of $I'$ such that
\begin{itemize}
\item  for each $i \in I$, $w_i'$ is $\epsilon$-close to $w_i$ in the $C^0$-topology; 
\item  for each $i \in I' \setminus I$, $w_i'$ is $\epsilon$-close to a constant loop in the $C^0$-topology.
 \end{itemize}
 
 We say that a chain $\overline{C}$ on $\mathfrak{L}(M)^I$ is  $\epsilon$-close to a finite set of multi-loops $S$
 if  each point of the support of $\overline{C}$ is   
 $\epsilon$-close to an element of $S$.
  
  Given a sequence of chains $( \overline{\overline{C}}^n )_n$  we write 
  $$\lim supp(\overline{C}^n) =  \{ \mathbf{w}^j\}_j $$
  if for each $\epsilon>0$, $\overline{C}^n$ is $\epsilon$-close to $\{ \mathbf{w}^j\}_j$ for $n \gg 0$, and  $\{ \mathbf{w}^j\}_j$ is the minimal set with this property.

A nice $MC$-cycle  $Z^{\diamond } $  consists in a sequence of $MC$-cycles $( Z^n )_n$ such that 
\begin{itemize}
\item for each $(G,m)$  $\lim Supp(Z_{G,m}^n)_n$ is finite ;
\item there exists a sequence  $( \tilde{Z}^{int,n} )_n$ where, for each $n$, $\tilde{Z}^{int, n}$ is an isotopy of  $MC$-cycles between $Z^n $ and $Z^{n-1}$ 
with
$$ \lim Supp( \tilde{Z}_{G,m}^{int}) =   \lim Supp(Z_{G,m})  \qquad \forall (G,m).$$
\end{itemize}

A nice  $MC$-cycle  $Z^{\diamond } $ is said homological trivial if there exists a sequence of $MC$-one chains $( B^n )_n$ such that 
\begin{itemize}
\item $\hat{\partial}B^n = Z^n$;
\item for each $(G,m)$ , $\lim Supp(B_{G,m}^n)_n =   \lim Supp(Z_{G,m})$  .
\end{itemize}

 For $\mathfrak{w}$ a one dimensional current, denote by $MCH(M, \mathfrak{w})$ the elements of $MCH(M)^{\diamond}$ such that, for each $(G,m)$, all the elements of $\lim Supp (\overline{Z}_{G,m}^n)_n$ represents the current $\mathfrak{w}$.

 In order to construct nice $MC$-cycles, we shall often use inductive argument on the set of decorated graphs. We shall use the following partial order on the set of decorated graphs:
we declare $G' \prec G$ if one of the following holds
\begin{itemize}
\item $\omega(\beta(G')) < \omega(\beta(G'))$
\item $\omega(\beta(G')) = \omega(\beta(G'))$ and $|E^{ex}(G')| - \chi(G') < |E^{ex}(G)| - \chi(G)$
\item $\delta_{E'} G' \cong G$ for some $E' \subset E(G)$
\end{itemize}
We also consider truncations of the $MC$-complex. Namely, for a decorated graph $G_0$, an element of $\mathcal{C}_{\prec G_0}$ consists in a collection of chains $( C_{G,m} )_{(G,m), G \prec G_0}$. $\mathcal{C}_{\prec G_0}$ is a subcomplex of $\mathcal{C}$, i.e. , it is invariant under $\hat{\partial}$.
We denote by $\mathcal{Z}_{\prec G_0}$ the corresponding cycles.

In the same way we define $\mathcal{C}_{\preccurlyeq G_0}$ and $\mathcal{Z}_{\preccurlyeq G_0}$. 
 
\begin{lemma} \label{nice-extension}

Let $G \in \mathfrak{G}$ with $H(G) \neq \emptyset$.
Let  $Z_{\prec G} \in \mathcal{Z}_{\prec G}$ be  a $MC$-cycle up to $G$. There exists a  $MC$-cycle $Z_{\preccurlyeq G}$ extending $Z_{\prec G}$. 

\end{lemma}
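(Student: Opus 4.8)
\medskip
\noindent\emph{Proof idea.}
The plan is to treat this as a single extension step and to construct the missing chains $Z_{G,m}$, for all markings $m$ of the fixed graph $G$, by induction on the marking degree $l=|m|$. The point is that $\hat\partial$-closedness prescribes the boundary of $Z_{G,m}$ entirely in terms of data already fixed on $\mathcal{Z}_{\prec G}$. Indeed, $(\hat\partial Z)_{(G,m)}=0$ reads
$$\partial Z_{G,m}=-(\delta Z)_{(G,m)}-(\eth Z)_{(G,m)}=:W_{G,m},$$
and every term on the right is available: the $\delta$-contribution runs over graphs $G'$ with $\delta_{e'}G'\cong G$, which satisfy $G'\prec G$ by the third clause of the partial order, so $Z_{G',m'}$ is given; the $\eth$-contribution involves the faces $(G,\partial_i m)$ of strictly smaller marking degree, available by the inner induction. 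Hence $W_{G,m}$ is a well-defined $\mathrm{Aut}(G,m)$-equivariant chain on $M^{H(G)}$.

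First I would check that $W_{G,m}$ is a cycle, $\partial W_{G,m}=0$, so that it is a legitimate candidate boundary. This is the same diagram chase that proves $\hat\partial^2=0$: using $\partial\delta+\delta\partial=0$ and $\partial\eth+\eth\partial=0$ I rewrite $\partial W_{G,m}$ through $\partial Z$ on the strata $\prec G$, substitute there the closedness relations already satisfied by $Z_{\prec G}$, and cancel the remaining terms with $\delta^2=\eth^2=0$ and $\delta\eth+\eth\delta=0$. I expect this to reproduce verbatim the computation verifying $\hat\partial^2=0$.

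The substance is to fill $W_{G,m}$ by a chain that is simultaneously forget-compatible, transversal to all the diagonals $\pi_e^{-1}(Diag)$, and cut-compatible; here the hypothesis $H(G)\neq\emptyset$ is essential. Since $G$ carries a half-edge I pass to the multi-loop space $\mathfrak{L}_G(M)$ and lift $W_{G,m}$ to a loop-space cycle $\overline{W}_{G,m}$ with $\mathrm{ev}(\overline{W}_{G,m})=W_{G,m}$; such a lift exists and is forget-compatible because the lower data out of which $W_{G,m}$ was assembled is already forget-compatible by hypothesis. I then distinguish two cases. If $G$ has a removable external edge $e$, forgetful compatibility forces $\overline{Z}_{G,m}$ to be the pullback $\overline{Z}_{G',m'}\times_{\mathbf{forget}_e}\mathfrak{L}_G(M)$ (respectively the product with $\overline{Z}_{G_e}$ in the unstable case), built from $\overline{Z}_{G',m'}$ with $G'=\mathrm{forget}_e(G)\prec G$; nothing needs to be filled, and the work is the consistency check that distinct removable edges give the same chain, a cocycle condition inherited from the forgetful compatibilities already imposed on $\mathcal{Z}_{\prec G}$. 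If $G$ has no removable external edge, I produce a genuine filling in the loop space by coning $\overline{W}_{G,m}$ along the interval fibers of the forgetting map $\mathbf{forget}_{s_0}$ attached to some half-edge $s_0\in H(G)$ (which exists precisely because $H(G)\neq\emptyset$), obtaining $\overline{Z}_{G,m}$ with $\partial\,\mathrm{ev}(\overline{Z}_{G,m})=W_{G,m}$, and I arrange the $\clubsuit$-decomposition formula (trivial when $G^{\clubsuit}=\emptyset$) by performing the cone compatibly with the fixed data $Z^{\clubsuit}$.

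Finally I would set $Z_{G,m}=\mathrm{ev}(\overline{Z}_{G,m})$ and secure transversality and cut-compatibility by a relative general-position perturbation, supported away from $\overline{W}_{G,m}$ (transversal by induction) and away from the loci where forgetful compatibility already pins down the chain. I expect the main obstacle to be exactly this last step: maintaining all three families of constraints at once, in particular perturbing into general position against the diagonals without destroying either the prescribed boundary or the product and sum structures required by forgetful compatibility, and carrying out the filling $\mathrm{Aut}(G,m)$-equivariantly on the orbifold quotient so that it descends as in Remark \ref{orbitfold}.
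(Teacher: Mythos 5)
Your reduction of the problem to filling a prescribed boundary $W_{G,m}$, your use of forgetful compatibility when there are external edges, and your final transversality-by-perturbation step all agree with the paper. But the core step of your argument --- actually producing a chain with $\partial Z_{G,m}=W_{G,m}$ when there are no external edges --- has a genuine gap. Knowing $\partial W_{G,m}=0$ is not enough: $M^{H(G)}$ and $\mathfrak{L}_G(M)$ have plenty of homology, so a cycle need not bound, and your proposed filling by ``coning along the interval fibers of $\mathbf{forget}_{s_0}$'' does not do what you claim. The pull-back of a $k$-cycle along the forgetting map (an interval fibration) is a $(k+1)$-chain whose boundary consists of the two collision faces, where the new marked point $s_0$ hits its neighbours; this is precisely the mechanism behind the operator $\delta$ (the string-bracket-type operator), not a cone, and its boundary is not the cycle you started from. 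So with $H(G)\neq\emptyset$ alone you have no argument that $W_{G,m}$ bounds, let alone bounds compatibly with the forgetful and cut constraints.

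The paper never has to prove that a cycle bounds, because it runs a two-tier induction that carries explicit fillings along. Besides the chains $Z_{G,m}$ with $|m|<l$, the inductive data includes auxiliary chains $Z_{G,\,m\sqcup\{E(G)\}}$, indexed by markings extended by the full edge set; the base case $Z_{G,E(G)}$ is fixed by forgetful compatibility, and this is where $H(G)\neq\emptyset$ really enters: cutting all edges (property (3) of $MC$-chains) turns every half-edge into an external edge, so forgetful compatibility pins the chain down. The inductive identity (\ref{inductive-partial}) then exhibits the prescribed boundary not merely as a cycle but, up to sign, as $\partial$ of the explicit chain $\sum_i(-1)^i Z_{G,\partial_i m\sqcup\{E(G)\}}-\sum_{e'}\delta_{e'}Z_{G',m'\sqcup\{E(G)\}}$. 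One then takes $Z_{G,m}$ to be a $C^0$-small transversal perturbation of that chain (so the boundary condition comes for free, and transversality is arranged by the finite-dimensional family argument in $\mathfrak{L}_G(M)$), and the isotopy between the two produces the next auxiliary chain $Z_{G,\,m\sqcup\{E(G)\}}$, which closes the induction. Without this bookkeeping device --- or some substitute proof that $W_{G,m}$ bounds compatibly with all the constraints --- your outline cannot be completed as stated.
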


\begin{proof}



Let $l \in \Z_{\geq 0}$, and assume that we have constructed $Z_{G, m}$ and $Z_{G,m \sqcup \{ E(G)\}}$ for each $|m| <l$. In the case $l=0$,  $Z_{G, E(G)} $ is defined using forgetful compatibility.




If there are external edges use forgetful compatibility to define $Z_{G, m}$ and $Z_{G,m \sqcup \{ E(G)\}}$.

Assume that there are not external edges. 
From the induction hypothesis we have
\begin{equation} \label{inductive-partial}
(-1)^l \partial( \sum_i (-1)^i Z_{G, \partial_i m \sqcup \{ E(G) \}} - \sum_{e'}  \delta_{e'} Z_{G', m' \sqcup \{ E(G)\}}  ) -
  \sum_i (-1)^i Z_{G, \partial_i m } + \sum_{e'}  \delta_{e'} Z_{G',m' }   =0
  \end{equation}


Using (\ref{inductive-partial}) we obtain that there exists $ Z_{G,  m  } $ 
 close in the $C^0$-topology to
 \begin{equation}  \label{perturbation-Z1}
   (-1)^l(   \sum_i (-1)^i Z_{G, \partial_i m \sqcup \{ E(G) \}} - \sum_{G'/e'=G}  \delta_{e'} Z_{G', m' \sqcup \{ E(G)\}}) ;
 \end{equation}
such that 
\begin{enumerate}
\item  $Z_{G,  m  }$ is transversal to $ \sqcap_{e \in E'} Diag_e $  for each  $E' \subset E(G) \setminus E_l$;
\item $ \partial Z_{G,  m  }  =   \sum_i (-1)^i Z_{G, \partial_i m } + \sum_{e'}  \delta_{e'} Z_{G',m' }   $.
\end{enumerate}

From (\ref{perturbation-Z1}) we obtain that there exists $ Z_{G,  m \sqcup \{ E(G) \}}$ such that 
$$\partial Z_{G,  m \sqcup \{ E(G) \}}  = (-1)^{l+1} Z_{G,  m  } +  \sum_i (-1)^i Z_{G, \partial_i m \sqcup \{ E(G) \}} - \sum_{e'}  \delta_{e'} Z_{G', m' \sqcup \{ E(G)\}} .$$


The  transversality for   $Z_{G,m} $ of point $(1)$ can be achieved by a standard transversality argument considering a finite dimensional family of elements of $\mathfrak{L}_G(M)$ such that the evaluation on the punctures labelled by $H(G) $  is submersive in the family. The argument produces a chain $\overline{Z}_{G,m}$ isotopic to   $(-1)^l( \sum_i (-1)^i \overline{Z}_{G, \partial_i m \sqcup \{  E(G) \}} + \sum_{e'}  \delta_{e'} \overline{Z}_{G', m \sqcup \{E(G)\}} ) $ such that 
$$  \partial  \overline{Z}_{G,m}  =   \sum_i (-1)^i \overline{Z}_{G, \partial_i m} + \sum_{e'}  \delta_{e'} \overline{Z}_{G',m}.$$
From the isotopy between 
$\overline{Z}_{G,m}$ and     $ (-1)^l (\sum_i (-1)^i \overline{Z}_{G, \partial_i m \sqcup \{  E(G) \}} - \sum_{G'/e'=G}   \delta_{e'} \overline{Z}_{G',m \sqcup \{E(G)\}}  )$ we obtain a chain  $ \overline{Z}_{G,m \sqcup \{E(G)\}}$ such that
$$\partial \overline{Z}_{G_0,m \sqcup \{E(G)\}} =
(-1)^{l+1} \overline{Z}_{G,m} + \sum_i (-1)^i \overline{Z}_{G, \partial_i m \sqcup \{  E(G) \}}  - \sum_{G'/e'=G}  \delta_{e'} \overline{Z}_{G', m \sqcup \{E(G)\}}. $$




\end{proof}

With a similar argument, we prove the following:

\begin{lemma} \label{nice-extension-B}
Let $G \in \mathfrak{G}$ with $H(G) \neq \emptyset$.
Let $Z_{\preccurlyeq G} \in \mathcal{Z}_{\preccurlyeq G}$ and
assume that there exists a  $MC$-one chain  $B_{\prec G} \in \mathcal{C}_{\prec G}$ such that $\hat{\partial } B_{\prec G} = Z_{\prec G}$.

There exists a $MC$-one chain $B_{\preccurlyeq G} \in    \mathcal{C}_{\preccurlyeq G}$ extending $B_{\prec G}$ such that $\hat{\partial } B_{\preccurlyeq G} = Z_{\preccurlyeq G}$.
\end{lemma}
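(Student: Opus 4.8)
The plan is to follow the inductive scheme of Lemma \ref{nice-extension} verbatim, replacing the cycle $Z$ by the one-chain $B$ and the closedness condition $\hat\partial Z=0$ by the bounding condition $\hat\partial B=Z$. Since $Z_{\preccurlyeq G}$ and $B_{\prec G}$ are given, the only new data to produce are the graph-$G$ components $B_{G,m}$, and as before I would construct them by induction on $l=|m|$, building at each stage both $B_{G,m}$ and $B_{G,m\sqcup\{E(G)\}}$. When $G$ has a removable external edge the component $B_{G,m}$ is forced by forgetful compatibility together with the fixed data $Z_{Ann0},Z^{\clubsuit}$ (Subsection \ref{forget-MC-section}), so it suffices to treat the case $H(G)\neq\emptyset$ with no external edges; the base case $l=0$ is handled the same way.

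In the remaining case the equation $\hat\partial B_{\preccurlyeq G}=Z_{\preccurlyeq G}$ reads, on the $(G,m)$-component,
\begin{equation*}
\partial B_{G,m}=Z_{G,m}-(\delta B)_{G,m}-(\eth B)_{G,m}=:R_{G,m}.
\end{equation*}
The point is that this localizes the problem at $G$: since $\delta_{e'}(G',m')=(G,m)$ forces $G'\prec G$, the term $(\delta B)_{G,m}$ only involves the already-fixed chains $B_{\prec G}$, while $(\eth B)_{G,m}$ only involves the $B_{G,\partial_i m}$ of strictly lower length; conversely $\delta$ sends a graph-$G$ chain to $\delta_e G\succ G$, so modifying $B_{G,\bullet}$ does not disturb $\hat\partial B$ at any graph $\prec G$. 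Thus $R_{G,m}$ is assembled entirely from known data, and the first thing to verify is that it is $\partial$-closed. This is the $B$-analogue of (\ref{inductive-partial}) and follows by combining $\hat\partial Z=0$, the hypothesis $\hat\partial B_{\prec G}=Z_{\prec G}$, and the anticommutation relations $\partial^2=0$, $\partial\delta+\delta\partial=0$, $\partial\eth+\eth\partial=0$.

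As in the passage leading to (\ref{perturbation-Z1}) I would not fill $R_{G,m}$ abstractly but exhibit an explicit primitive, using the extra column indexed by $\{E(G)\}$ as a contracting homotopy in the $\eth$-direction: the chain
\begin{equation*}
\Xi_{G,m}:=(-1)^l\Big(\sum_i(-1)^i B_{G,\partial_i m\sqcup\{E(G)\}}-\sum_{G'/e'=G}\delta_{e'}B_{G',m'\sqcup\{E(G)\}}\Big)
\end{equation*}
satisfies $\partial\Xi_{G,m}=R_{G,m}$ by the same inductive identity. I would then take $B_{G,m}$ to be a perturbation of $\Xi_{G,m}$, $C^0$-close to it and transversal to $\sqcap_{e\in E'}Diag_e$ for every $E'\subset E(G)\setminus E_l$, produced by the standard transversality argument (a finite-dimensional family in $\mathfrak{L}_G(M)$ submersive on the punctures labelled by $H(G)$), and read off $B_{G,m\sqcup\{E(G)\}}$ from the isotopy realizing this perturbation, exactly as $\overline Z_{G,m\sqcup\{E(G)\}}$ is produced at the end of Lemma \ref{nice-extension}. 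Carrying everything on the loop-space lifts $\overline B_{G,m}\in C_*(\mathfrak{L}_G(M))$ keeps forgetful compatibility automatic, and taking the perturbations $C^0$-small keeps the supports under control.

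The step I expect to be the main obstacle is precisely the identity $\partial\Xi_{G,m}=R_{G,m}$, i.e.\ the $B$-analogue of (\ref{inductive-partial}). Unlike in Lemma \ref{nice-extension}, where the right-hand side is built solely from the cycle, here it mixes the prescribed data $Z_{G,m}$ with the inductively constructed $B$-terms, so the cancellations must use \emph{both} $\hat\partial Z=0$ and $\hat\partial B_{\prec G}=Z_{\prec G}$ simultaneously; the signs coming from the simplicial faces $\partial_i m$ and from the orientation rule for $\delta_e$ (the normal-bundle-to-diagonal convention of Subsection \ref{operator-delta}) must be matched exactly so that the graph-$G$ terms cancel as claimed. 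Once this single identity is checked, transversality, forgetful compatibility and support control are all supplied by the same devices as in Lemma \ref{nice-extension}, so no genuinely new analytic input is required.
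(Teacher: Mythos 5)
Your skeleton coincides with the paper's proof: induction on $l=|m|$ constructing both $B_{G,m}$ and $B_{G,m\sqcup\{E(G)\}}$, forgetful compatibility for external edges and for the base case $l=0$, a $C^0$-small perturbation of an explicit model to gain transversality, and a lift to the multi-loop space. The genuine gap is in the one step you yourself flagged as critical: your explicit primitive is wrong. The paper's model for $B_{G,m}$, equation (\ref{perturbation-B}), is
\begin{equation*}
(-1)^l\Big(\sum_i(-1)^i B_{G,\partial_i m\sqcup\{E(G)\}}+\sum_{e'}\delta_{e'}B_{G',m'\sqcup\{E(G)\}}-Z_{G,m\sqcup\{E(G)\}}\Big),
\end{equation*}
and the term $-Z_{G,m\sqcup\{E(G)\}}$ that you dropped cannot be dispensed with. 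Indeed, when you compute $\partial\Xi_{G,m}$ for your $\Xi_{G,m}$, every $\partial B$-term must be expanded using $\hat{\partial}B=Z$ (not $\hat{\partial}B=0$): this produces, besides the wanted $(\eth B)_{G,m}$ and $(\delta B)_{G,m}$ pieces of $R_{G,m}$, leftover terms $\sum_i(-1)^i Z_{G,\partial_i m\sqcup\{E(G)\}}$ and $\sum_{G',e'}\delta_{e'}Z_{G',m'\sqcup\{E(G)\}}$ that nothing cancels, and it never produces the term $Z_{G,m}$ that $R_{G,m}$ contains. So $\partial\Xi_{G,m}\neq R_{G,m}$, and your appeal to ``the same inductive identity'' fails: the correct $B$-analogue of (\ref{inductive-partial}) is the displayed multline in the paper's proof, which has $Z_{G,m\sqcup\{E(G)\}}$ inside the $\partial(\cdots)$. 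That term repairs both defects simultaneously, because $\hat{\partial}Z=0$ at the component $(G,m\sqcup\{E(G)\})$ turns its boundary into $(\delta Z)_{G,m\sqcup\{E(G)\}}+(\eth Z)_{G,m\sqcup\{E(G)\}}$: the face of $m\sqcup\{E(G)\}$ dropping $E(G)$ supplies exactly $Z_{G,m}$, and the remaining faces cancel the leftovers. Structurally this is forced: the unperturbed model is ``what $B_{G,m}$ would have to be if $B_{G,m\sqcup\{E(G)\}}$ were zero'', read off from $\hat{\partial}B=Z$ at $(G,m\sqcup\{E(G)\})$; in Lemma \ref{nice-extension} the right-hand side of that equation is $0$, here it is $Z_{G,m\sqcup\{E(G)\}}\neq 0$, so the cycle at the full edge set must enter the model.

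The omission also propagates to the next stage of the induction: the companion chain must satisfy
\begin{equation*}
\partial B_{G,m\sqcup\{E(G)\}}=(-1)^{l}B_{G,m}-\sum_i(-1)^i B_{G,\partial_i m\sqcup\{E(G)\}}-\sum_{e'}\delta_{e'}B_{G',m'\sqcup\{E(G)\}}+Z_{G,m\sqcup\{E(G)\}},
\end{equation*}
which is precisely $\hat{\partial}B=Z$ at the component $(G,m\sqcup\{E(G)\})$; the chain you would ``read off from the isotopy'' between $B_{G,m}$ and your $\Xi_{G,m}$ misses the $Z$-term, so the collection you build violates $\hat{\partial}B=Z$ on all components $(G,m\sqcup\{E(G)\})$ and the inductive hypothesis is destroyed at step $l+1$. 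Two smaller remarks: your sign on the $\delta_{e'}B$-sum copies the cycle case (\ref{perturbation-Z1}), whereas the paper's (\ref{perturbation-B}) has the opposite sign there; and verifying that $R_{G,m}$ is $\partial$-closed, as you propose to do first, buys nothing by itself --- exactness is the actual content (it genuinely fails for graphs with $H(G)=\emptyset$, cf. (\ref{obstructions})), and in this lemma it is obtained only by exhibiting the correct explicit primitive.
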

\begin{proof}
We use an inductive argument analogous to the one use in the proof of Lemma \ref{nice-extension}.

Let $l \in \Z_{\geq 0}$ and assume that we have constructed $B_{G, m}$ and $B_{G,m \sqcup \{ E(G)\}}$ for each $|m| <l$. In the case $l=0$,  $B_{G, E(G)} $ is defined using forgetful compatibility.

If there are  external edges use forget compatibility to define $B_{G,m}$.

Assume that there are not external edges.
From induction hypothesis we obtain
\begin{multline}
    (-1)^l \partial( \sum_i (-1)^i B_{G, \partial_i m \sqcup \{ E(G) \}} + \sum_{G', e'}  \delta_{e'} B_{G', m' \sqcup \{ E(G)\}} - Z_{G,  m \sqcup \{ E(G) \}}   )+ \\
  \sum_i (-1)^i B_{G, \partial_i m } + \sum_{G' ,e'}  \delta_{e'} B_{G',m' }   = Z_{G,m}.
\end{multline}

It follows that there exists $ B_{G,  m  } $  
 close in the $C^0$-topology to 
 \begin{equation}  \label{perturbation-B}
    (-1)^l( \sum_i (-1)^i B_{G, \partial_i m \sqcup \{ E(G) \}} + \sum_{e'}  \delta_{e'} B_{G', m' \sqcup \{ E(G)\}} - Z_{G,  m \sqcup \{ E(G) \}} ) ;
 \end{equation}
such that
\begin{itemize}
\item   $B_{G,  m  }$ is transversal to $ \sqcap_{e \in E'} Diag_e $  for each  $E' \subset E(G) \setminus E_l$;
\item $ \partial B_{G,  m  }  +  \sum_i (-1)^i B_{G, \partial_i m } + \sum_{e'}  \delta_{e'} B_{G',m' }   = Z_{G,m}  $.
\end{itemize}

From (\ref{perturbation-B}) it follows that there exists also $ B_{G,  m \sqcup \{ E(G) \}}$ such that 
$$\partial B_{G,  m \sqcup \{ E(G) \}}  = (-1)^{l} B_{G,  m  } - \sum_i (-1)^i B_{G, \partial_i m \sqcup \{ E(G) \}} - \sum_{e'}  \delta_{e'} B_{G', m' \sqcup \{ E(G)\}} + Z_{G,  m \sqcup \{ E(G) \}} .$$

As in  Lemma \ref{nice-extension} we can lift the argument to  the multi-loop-space and obtain $\overline{B}_{G,m}$.


 \end{proof}

For each nice $MC$-cycle $Z$, Lemma \ref{nice-extension-B} implies that
the obstructions to find a a nice $MC$-one chain $B$ such that  
\begin{equation} \label{eq-trivial}
\hat{\partial } B = Z 
\end{equation}
are concentrate on the graphs without half edges. 
  If $G$ is a graph with $H(G)= \emptyset$, and $\overline{B}_{\prec G}$ has been constructed such that  $\hat{\partial } B_{\prec G} = Z_{\prec G}$, $\overline{B}_{G}$ exists if and only if
$  \sum_{e'}  \delta_{e'} \overline{B}_{G'} +    \overline{Z}_{G }  \in C_0(\mathfrak{L}_G(M))$
is homological trivial as zero-chain on  $\mathfrak{L}_G(M)$:
\begin{equation}  \label{obstructions}
 [ \sum_{e'}  \delta_{e'} \overline{B}_{G'} -    \overline{Z}_{G } ] =0  \in H_0(\mathfrak{L}_G(M)) .
 \end{equation}

 \begin{lemma} \label{basic-cycle}
 Let $ \mathbf{w}  = ( w_i  )_{i \in I } \in \mathfrak{L}(M)^I$ be a multi-loop, for some finite set $I$. Let $G^{\heartsuit}$ be a decorated graph with
 $$ H(G^{\heartsuit}) = \emptyset, \quad  V(G^{\heartsuit})=I.$$
 
 There exists a nice $MC$-cycle $( Z^n )_n$ such that
 $$\overline{Z}_{G^{\heartsuit}} = \mathbf{w} ; $$
$$Z_{G,m} \neq 0 \implies G^{\heartsuit} \prec G .$$
$$\lim Supp(Z)= \{ \mathbf{w} \}.$$

  \end{lemma}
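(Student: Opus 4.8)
The plan is to build the sequence $(Z^n)_n$ by induction over the partial order $\prec$, using $\mathbf{w}$ as a seed placed on $G^{\heartsuit}$ and propagating it to all larger graphs by the two mechanisms already available: forgetful compatibility, which produces the chains on graphs carrying external edges, and Lemma \ref{nice-extension}, which produces the chains on the remaining graphs once the cycle has been built strictly below. First I would fix the base of the induction: set $Z_{G,m}=0$ for every $(G,m)$ with $G \prec G^{\heartsuit}$ and put $\overline{Z}_{G^{\heartsuit}} = \mathbf{w}$, so that $Z_{G^{\heartsuit}} = \mathrm{ev}(\mathbf{w})$ by (\ref{chains-ev}). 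Since $G^{\heartsuit}$ has no edges, $H(G^{\heartsuit}) = \emptyset$ and $E(G^{\heartsuit}) = \emptyset$, so the transversality and cut-invariance requirements are vacuous and forgetful compatibility holds trivially with $\overline{C}_{G^{\heartsuit}} = \mathbf{w}$. The closedness $\hat{\partial} Z = 0$ up to $G^{\heartsuit}$ holds as well: $\partial \overline{Z}_{G^{\heartsuit}} = 0$ because $\mathbf{w}$ is a single multi-loop (a $0$-chain), the operator $\delta$ receives nothing since every $G'$ with $G'/E' \cong G^{\heartsuit}$ satisfies $G' \prec G^{\heartsuit}$ and hence $Z_{G'} = 0$, and $\eth$ is trivial as $G^{\heartsuit}$ carries no edge filtration. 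Thus $Z$ restricted to $\preccurlyeq G^{\heartsuit}$ is a legitimate $MC$-cycle from which the induction can start.

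For the inductive step, take $G$ with $G^{\heartsuit} \prec G$ and suppose the cycle has been constructed on $\mathcal{Z}_{\prec G}$. If $G$ carries a removable external edge, then $\overline{Z}_{G,m}$ is forced by forgetful compatibility as the forgetting-pullback of a chain already defined on a smaller graph; iterating this down to $G^{\heartsuit}$ expresses these chains ultimately in terms of $\mathbf{w}$, so nothing new is chosen. If $G$ has no external edge but $H(G) \neq \emptyset$, I would apply Lemma \ref{nice-extension} verbatim to extend $Z_{\prec G}$ to $Z_{\preccurlyeq G}$; the hypothesis $H(G) \neq \emptyset$ of that lemma is exactly what holds here. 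The only remaining graphs are the edge-free $G$ with $G^{\heartsuit} \prec G$ and $G \neq G^{\heartsuit}$. For these the evaluation target is $M^{\emptyset}$, a dimension count shows that the closedness condition lands in $C_{-1}$ and is vacuous, no external edge couples them to $\mathbf{w}$ through forgetful compatibility, and every $\delta_{e'}$-contribution reaching them lives in dimension $-1$ and so vanishes; hence I may set $Z_{G} = 0$. This shows $G^{\heartsuit}$ is the unique edge-free graph on which $Z$ is nonzero, giving $Z_{G,m} \neq 0 \Rightarrow G^{\heartsuit} \preccurlyeq G$ together with $\overline{Z}_{G^{\heartsuit}} = \mathbf{w}$.

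To obtain a \emph{nice} cycle rather than a single cycle I would run the construction relative to a sequence $\epsilon_n \to 0$ controlling the transversality perturbations inside Lemma \ref{nice-extension}, producing cycles $Z^n$, and join consecutive $Z^n, Z^{n-1}$ by isotopies $\tilde{Z}^{int,n} \in \tilde{\mathcal{C}}_0$ built by the same inductive scheme in the one-parameter complex $\tilde{\mathcal{C}}$, exactly as Lemma \ref{nice-extension-B} lifts the argument to families. At every graph the chain produced is $C^0$-close to an explicit combination of forgetting-pullbacks and $\delta_{e'}$-images of chains already built: forgetting leaves the underlying loop unchanged and only inserts a marked point, while $\delta_{e'}$ intersects with $Diag$ and joins two marked components. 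Thus every multi-loop appearing in the limit support is obtained from the components of $\mathbf{w}$ by inserting marked points and gluing along the diagonal, and no independent multi-loop is created; consequently $\lim Supp(Z_{G,m})$ is generated by $\mathbf{w}$ for every $(G,m)$, and choosing $\epsilon_n$ small enough at each graph gives $\lim Supp(\tilde{Z}^{int,n}_{G,m}) = \lim Supp(Z_{G,m})$, which is precisely the data of a nice $MC$-cycle with $\lim Supp(Z) = \{\mathbf{w}\}$.

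I expect the main obstacle to be the simultaneous control of the limit support through the transversality perturbations. Lemma \ref{nice-extension} only guarantees $C^0$-closeness to the prescribed combination (\ref{perturbation-Z1}), and a priori a perturbation achieving transversality to all the diagonals $\sqcap_{e \in E'} \pi_e^{-1}(Diag)$ could spread the support away from the configurations built out of $\mathbf{w}$; the delicate point is to carry out these perturbations within families of multi-loops that remain uniformly $C^0$-close to $\mathbf{w}$, so that the accumulated support collapses back to $\mathbf{w}$ in the limit. A secondary but necessary check is that the seeded truncation, namely the seed on $G^{\heartsuit}$ together with zero strictly below it, is genuinely $\hat{\partial}$-closed and forget-compatible as an element of $\mathcal{Z}_{\preccurlyeq G^{\heartsuit}}$, so that the hypotheses of Lemma \ref{nice-extension} are met at the very first step of the induction.
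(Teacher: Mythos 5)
Your base case and your treatment of the graphs with $H(G) \neq \emptyset$ follow the paper's own scheme (induction along $\prec$, forgetful compatibility when there are external edges, Lemma \ref{nice-extension} for the extension, and its one-parameter analogue for the connecting isotopies --- which, as a minor point, is Lemma \ref{nice-extension-isotopy}, not Lemma \ref{nice-extension-B}). The genuine gap is your handling of the edge-free graphs $G \succ G^{\heartsuit}$ with $H(G)=\emptyset$, where you set $Z_G = 0$ on the strength of a dimension count. That count is correct for a \emph{single} $MC$-cycle: for $Z \in \mathcal{C}_0$ the contributions $\delta_{e'} Z_{G'}$ landing on an edge-free graph have negative dimension and the closedness condition there is vacuous. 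But the lemma asks for a \emph{nice} $MC$-cycle, and niceness is where the condition bites. The connecting isotopies $\tilde{Z}^{int,n}$ live one dimension higher, so $\delta_{e'} \tilde{\overline{Z}}^{int,n}_{G'}$ is a generically non-zero $0$-chain on the multi-loop space (it records the instants at which the two ends of the edge $e'$ cross the diagonal during the isotopy). The requirement $\hat{\partial}\tilde{Z}^{int,n} = 0$ at an edge-free $G$ reads
$$ \partial \tilde{\overline{Z}}^{int,n}_G + \sum_{G',e'} \delta_{e'}\tilde{\overline{Z}}^{int,n}_{G'} = 0 ,$$
and the boundary of the interpolating chain $\tilde{\overline{Z}}^{int,n}_G$ contains the difference of its ends $\overline{Z}^{n}_G - \overline{Z}^{n-1}_G$. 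If you freeze $\overline{Z}^{n}_G = 0$ for all $n$, you are demanding that $\sum_{G',e'} pr_*\bigl(\delta_{e'}\tilde{\overline{Z}}^{int,n}_{G'}\bigr)$ be null-homologous in $\mathfrak{L}_G(M)$; its class in $H_0(\mathfrak{L}_G(M))$ is exactly the type of obstruction (\ref{obstructions}) discussed in the paper (see also the remark following Lemma \ref{extension-isotopies-inductive}), and it does not vanish in general. So with your choice the sequence $(Z^n)_n$ would admit no connecting isotopies and would fail to be nice.

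This edge-free case is precisely where the paper's proof does its real work: it defines the chains on edge-free graphs recursively in $n$ by
$$ \overline{Z}^{n}_G = \overline{Z}^{n-1}_G + \sum_{G',e'} pr_*\bigl(\delta_{e'} \tilde{\overline{Z}}^{int,n}_{G'}\bigr) ,$$
which makes the required isotopy $\tilde{\overline{Z}}^{int,n}_G$ exist by construction, and then modifies $\overline{Z}^{n}_G$ so that the loops attached to vertices $v \notin I$ converge to constant loops as $n \to \infty$; since the definition of $\epsilon$-closeness of multi-loops tolerates extra components that are close to constants, this still yields $\lim Supp(Z) = \{\mathbf{w}\}$. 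Note also that your intermediate claim --- that $G^{\heartsuit}$ is the \emph{unique} edge-free graph carrying a non-zero chain --- is stronger than what the lemma asserts and stronger than what the construction can deliver: the conclusion $Z_{G,m} \neq 0 \implies G^{\heartsuit} \prec G$ deliberately allows non-zero chains on larger edge-free graphs, and the paper's recursion produces them.
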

 \begin{proof}

We proceed by induction on the graphs. Assume that we have constructed $\overline{Z}_{\prec G}^n$  and  $\tilde{ \overline{Z}}^{int, n}_{\prec G}$ 
with  
$$\lim Supp (\overline{Z}_{\prec G})= \lim Supp ( \tilde{ \overline{Z}}^{int, n}_{\prec G})= \{ \mathbf{w} \} .$$

If $H(G) \neq \emptyset$ use Lemma  \ref{nice-extension} to obtain  $\overline{Z}_{\preccurlyeq G}^n$ for each $n$. Apply Lemma  \ref{nice-extension-isotopy} to obtain $\tilde{ \overline{Z}}^{int, n}_{\preccurlyeq G}$ isotopy between $\overline{Z}_{\preccurlyeq G}^n$ and $\overline{Z}_{\preccurlyeq G}^{n-1}$ . By construction we have
$$  \lim Supp (\overline{Z}_{\preccurlyeq G})= \lim Supp (\overline{Z}_{\prec G}),  \lim Supp (\tilde{ \overline{Z}}^{int})= \lim Supp (\tilde{ \overline{Z}}^{int}_{\prec G}) .$$

Assume now $H(G) = \emptyset$. By induction on $n$ set 
 $$ \overline{Z}^{n}_G=  \overline{Z}^{n-1}_G + \sum_{G',e'} pr_*(\delta_{ e'} \tilde{ \overline{Z}}^{int, n}_{G'} ) .$$
 There exists $\tilde{ \overline{Z}}^{int, n}_{G} $ isotopy between $ \overline{Z}^{n}_G$ and  $  \overline{Z}^{n-1}_G$ with
$$\partial \tilde{ \overline{Z}}^{int, n}_{G} +  \sum_{G',e'} \delta_{ e'} \tilde{ \overline{Z}}^{int, n}_{G'} =0 .$$
$$\lim supp (\overline{Z}_G^{int}) = \{ \mathbf{w} \}.$$
We can modify $\overline{Z}^{n}_G$  such that  $w_v^n $ converges to a constant loop for $n \rightarrow \infty$, for each $v \notin I$.

 \end{proof}

 
 
 

 
 


\subsubsection{Isotopies}

Isotopies of nice $MC$-cycles can be defined adapting the definition of nice $MC$-cycles to isotopies.

In the definition of the limit support $\lim \tilde{Supp}$ for isotopies we need to consider one parameter family of  mmulti-loops $\tilde{ \mathbf{w}} = (\mathbf{w}^t)_{ a \leq  t \leq b}$, where  we assume $\mathbf{w}^t$ is independent on $t$ for $t \gg 0$ or $t \ll 0$,  if $b = \infty$  or $a = - \infty$.

An isotopy of nice $MC$-cycle  $\tilde Z^{\diamond } $  consists in a sequence of isotopies of $MC$-cycles $( \tilde Z^n )_n$ such that 
\begin{itemize}
\item for each $(G,m)$  $\lim \tilde{Supp} (\tilde Z_{G,m}^n)_n$ is finite ;
\item there exists a sequence  $( \tilde{\tilde{Z}}^{int,n} )_n$ where, for each $n$, $\tilde{\tilde{Z}}^{int, n}$ is an isotopy of isotopy of $MC$-cycles between $\tilde Z^n $ and $  \tilde Z^{n-1}$ 
with
$$ \lim \tilde{Supp}( \tilde{\tilde{Z}}_{G,m}^{int}) =   \lim \tilde{Supp} (\tilde Z_{G,m}^n)  \qquad \forall (G,m).$$
\end{itemize}

An isotopy of  nice  $MC$-cycles  $\tilde Z^{\diamond } $ is said homological trivial if there exists a sequence of isotopies  of $MC$-one chains $ ( \tilde B^n )_n$ such that 
\begin{itemize}
\item $\hat{\partial} \tilde B^n = \tilde Z^n$;
\item for each $(G,m)$ , $\lim \tilde{Supp} (\tilde B_{G,m}^n)=   \lim \tilde{Supp} (\tilde Z_{G,m}^n)$  .
\end{itemize}

 We have the following extension Lemma:
 
  \begin{lemma} \label{nice-extension-isotopy}
 Assume $H(G) \neq \emptyset$.
 
 Let $Z_{\preccurlyeq G}^0, Z_{\preccurlyeq G}^1 \in  \mathcal{Z}_{\preccurlyeq G} $ and  let  $\tilde{Z} _{\prec G} \in  \tilde{ \mathcal{Z}}_{\prec G} $ be an isotopy between
 $Z_{\prec G}^0$ and $ Z_{\prec G}^1$. 
 There exists  
 $\tilde{Z} _{\preccurlyeq G} \in  \tilde{ \mathcal{Z}}_{\preccurlyeq G} $ isotopy between 
 $Z_{\preccurlyeq G}^0$ and $Z_{\preccurlyeq G}^1 $ extending $\tilde{Z} _{\prec G} $.     
 \end{lemma}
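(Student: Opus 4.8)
The plan is to run the inductive scheme of Lemma~\ref{nice-extension} one dimension up, now for the one-parameter Borel--Moore chains on $\R \times M^{H(G)}$, with the behaviour at $t = \pm\infty$ pinned down by $Z^0_{\preccurlyeq G}$ and $Z^1_{\preccurlyeq G}$. I would induct on $l = |m|$, at each stage producing both $\tilde{Z}_{G,m}$ and the auxiliary chain $\tilde{Z}_{G, m \sqcup \{E(G)\}}$. For $l = 0$ the chain $\tilde{Z}_{G, E(G)}$ is fixed by forgetful compatibility, and whenever $G$ has a removable external edge both chains are determined by forgetful compatibility from smaller graphs; so the content is entirely in the case with no external edges.

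In that case the closure condition $\hat{\partial}\tilde{Z}_{\preccurlyeq G} = 0$ forces
$$\partial \tilde{Z}_{G,m} = \sum_i (-1)^i \tilde{Z}_{G, \partial_i m} + \sum_{e'} \delta_{e'} \tilde{Z}_{G', m'},$$
where the right-hand side is already determined by the inductive data. As in Lemma~\ref{nice-extension}, combining the inductive hypothesis with $\hat{\partial}^2 = 0$ shows this prescribed boundary is a cycle, and near $t = \pm\infty$ it matches the geometric boundaries of $\R_{<-T} \times Z^0_{G,m}$ and $\R_{>T} \times Z^1_{G,m}$ because $\tilde{Z}_{\prec G}$ already restricts correctly to the two ends. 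I would then look for $\tilde{Z}_{G,m}$ realizing this boundary and equal to the product chains in the regions $t < -T$ and $t > T$.

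Since $\R$ is contractible, I expect to build such a filling by gluing the two product ends to a chain coning off the prescribed boundary over $[-T, T]$, chosen $C^0$-close to the natural candidate (\ref{perturbation-Z1}); transversality to $\sqcap_{e \in E'}\pi_e^{-1}(Diag) \times [0,1]$ for all $E' \subset E^{in}(G) \setminus E^l$ is then arranged by the same finite-dimensional transversality argument as before, applied in the family over $\R$. The auxiliary chain $\tilde{Z}_{G, m \sqcup \{E(G)\}}$ comes out of the isotopy witnessing this transversality, and the whole step is performed at the level of $\mathfrak{L}_G(M)$ so that the lift $\tilde{\overline{Z}}_{G,m}$ exists and forgetful compatibility holds.

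The hard part will be arranging the filling so that it simultaneously has the correct product form at $t = \pm\infty$, is transversal to the diagonals crossed with $[0,1]$, and is forget-compatible, all while matching the already-fixed $\tilde{Z}_{\prec G}$. As in Lemma~\ref{nice-extension} the compatibility of these constraints rests on carrying out the perturbation on the multi-loop space and on the prescribed boundary being genuinely closed; the only genuinely new bookkeeping is the pair of boundary conditions at infinity, which are automatically consistent since $Z^0_{\preccurlyeq G}$ and $Z^1_{\preccurlyeq G}$ are cycles whose restrictions to $\prec G$ are exactly the two ends of the given isotopy $\tilde{Z}_{\prec G}$.
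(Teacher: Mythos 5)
Your proposal follows essentially the same route as the paper: the same induction on $|m|$ with forgetful compatibility handling the base case and external edges, the same prescribed-boundary-is-a-cycle verification via the inductive identity, and the same perturbation of a natural candidate chain with product ends at $t=\pm\infty$ (your ``cone over $[-T,T]$ glued to the product ends'' is exactly the role of the paper's auxiliary chain $\tilde{Z}^{\dagger}_{G,m\sqcup\{E(G)\}}$, whose boundary term corrects the candidate (\ref{perturbation-Z1}) into (\ref{perturbation-Z-isotopy})), followed by extraction of $\tilde{Z}_{G,m\sqcup\{E(G)\}}$ and the lift to the multi-loop space. No substantive deviation or gap.
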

 
\begin{proof}

Let $l \in \Z_{\geq 0}$, and assume that we have constructed $\tilde{Z}_{G,m}$ and $\tilde{Z}_{G, m \sqcup \{ E(G) \}}$ for each $m$ with $|m| <l$ and $E(G) \notin m$. In the case $l=0$,  $\tilde{Z}_{G, E(G)} $ is defined using forgetful compatibility.

If there exists a external edges use forgetful compatibility to define  $\tilde{Z}_{G,m}$ and $\tilde{Z}_{G, m \sqcup \{ E(G) \}}$. 



Assume there are not external edges.
From the induction assumption we obtain the identity
\begin{equation} \label{inductive-partial-isotopy}
(-1)^l \partial( \sum_i (-1)^i \tilde{Z}_{G, \partial_i m \sqcup \{ E(G) \}} - \sum_{e'}  \delta_{e'} \tilde{Z}_{G', m' \sqcup \{ E(G)\}}  ) -  
  \sum_i (-1)^i \tilde{Z}_{G, \partial_i m } + \sum_{e'}  \delta_{e'} \tilde{Z}_{G',m' }  =0. 
  \end{equation}

Pick a chain $\tilde{Z}_{G,m \sqcup \{ E(G) \} }^{\dagger}$ which agrees with $ Z_{G,m \sqcup \{ E(G) \} }^0 \times \R_{<-T}$ and  $ Z_{G,m \sqcup \{ E(G) \} }^1 \times \R_{>T}$  for $T \gg 0$. 
There exists $  \tilde{Z}_{G,  m  }$ close in the $C^0$-topology to  
\begin{equation}   \label{perturbation-Z-isotopy}
 (-1)^l  ( \sum_i (-1)^i \tilde{Z}_{G, \partial_i m \sqcup \{ E(G) \}} - \sum_{G'/e'=G}  \delta_{e'} \tilde{Z}_{G', m' \sqcup \{ E(G)\}} -\partial \tilde{Z}_{G,m \sqcup \{ E(G) \} }^{\dagger}) ;
\end{equation} 
such that 
\begin{enumerate}
\item $$\tilde{Z}_{G,m}^{<-T} = Z_{G,m}^0 \times \R_{< -T}$$
$$\tilde{Z}_{G,m}^{>T} = Z_{G,m}^1 \times \R_{> T} $$
for $T \gg 0$.
\item  $\tilde{Z}_{G,  m  }$ is transversal to $( \sqcap_{e \in E'} Diag_e ) \times \R $  for each  $E' \subset E(G) \setminus E_l$;

\item $ \partial \tilde{Z}_{G,  m  }  -  \sum_i (-1)^i \tilde{Z}_{G, \partial_i m } + \sum_{G',e'}  \delta_{e'} \tilde{Z}_{G',m' }  =0 $   ;

\end{enumerate}


From (\ref{perturbation-Z-isotopy}) we obtain that $ \tilde{Z}_{G,  m \sqcup \{ E(G) \}}$ such that 
$$\partial \tilde{Z}_{G,  m \sqcup \{ E(G) \}}  = (-1)^{l+1} \tilde{Z}_{G,  m  } + \sum_i (-1)^i \tilde{Z}_{G, \partial_i m \sqcup \{ E(G) \}} - \sum_{G', e'}  \delta_{e'} \tilde{Z}_{G', m' \sqcup \{ E(G)\}} .$$

The argument can be lifted to multi-loop space as usual.

\end{proof}

From the last Lemma we deduce the following
  \begin{lemma} \label{extension-isotopies-inductive}
 Let $Z_{\prec G}^0 \in  \mathcal{Z}_{\prec G} $,  $Z_{\prec G}^1 \in  \mathcal{Z}_{\prec G} $ and    $\tilde{Z} _{\prec G} \in  \tilde{ \mathcal{Z}}_{\prec G} $  isotopy between $Z_{\prec G}^0$ and $Z_{\prec G}^1$.
 Let $Z_{\preccurlyeq G}^0    \in  \mathcal{Z}_{\preccurlyeq G} $ extending $Z_{\preccurlyeq G}^0 $.
 

There exist
$Z_{\preccurlyeq G}^1 \in  \mathcal{Z}_{\prec G} $ and 
 $\tilde{Z} _{\preccurlyeq G} \in  \tilde{ \mathcal{Z}}_{\preccurlyeq G} $  isotopy between $Z_{\preccurlyeq G}^0$ and $Z_{\preccurlyeq G}^1  $ such that
\begin{itemize} 
\item $Z_{\preccurlyeq G}^1  $ extends  $Z_{\prec G}^1  $;
\item  $\tilde{Z} _{\preccurlyeq G}  $ extends $\tilde{Z} _{\prec G}  $. 
 \end{itemize}
 \end{lemma}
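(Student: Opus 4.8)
The plan is to rerun the inductive construction of Lemma \ref{nice-extension-isotopy} almost verbatim, the one change being that we now prescribe the boundary behaviour of the isotopy only at $t \ll 0$. There the isotopy must coincide with the \emph{given} extension $Z_{\preccurlyeq G}^0 \times \R_{<-T}$; at $t \gg 0$ we impose no constraint in advance and instead \emph{define} the second endpoint as the large-$t$ slice of the isotopy we have built. Thus $Z_{\preccurlyeq G}^1$ is produced simultaneously with $\tilde Z_{\preccurlyeq G}$, rather than being supplied as an input.

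As in the previous lemmas we induct on $l = |m|$, the base case $l = 0$ and the case of a graph carrying external edges being handled by forgetful compatibility. Assume there are no external edges and that $\tilde Z_{G,m}$ and $\tilde Z_{G, m \sqcup \{E(G)\}}$ have been constructed for $|m| < l$. The induction hypothesis yields the compatibility identity (\ref{inductive-partial-isotopy}) for the combination assembled from the lower data. We choose an auxiliary chain $\tilde Z_{G, m \sqcup \{E(G)\}}^{\dagger}$ agreeing with $Z_{G, m \sqcup \{E(G)\}}^0 \times \R_{<-T}$ for $T \gg 0$, with \emph{no} condition imposed at $t \gg 0$, and then perturb as in (\ref{perturbation-Z-isotopy}) to obtain a transversal chain $\tilde Z_{G,m}$ that is $C^0$-close to the prescribed combination, satisfies $\tilde Z_{G,m}^{<-T} = Z_{G,m}^0 \times \R_{<-T}$, and has the correct boundary
$$ \partial \tilde Z_{G,m} - \sum_i (-1)^i \tilde Z_{G, \partial_i m} + \sum_{G', e'} \delta_{e'} \tilde Z_{G', m'} = 0 . $$
The essential extra requirement, absent from Lemma \ref{nice-extension-isotopy}, is that the perturbation be chosen \emph{independent of $t$ for $t \gg 0$}, so that $\tilde Z_{G,m}^{>T}$ is a genuine product $Z_{G,m}^1 \times \R_{>T}$; this is what defines $Z_{G,m}^1$. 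We then produce $\tilde Z_{G, m \sqcup \{E(G)\}}$ from (\ref{perturbation-Z-isotopy}) exactly as before, and lift the whole construction to the multi-loop space $\mathfrak{L}_G(M)$ in the usual way.

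It remains to check that the collection $Z_{\preccurlyeq G}^1 = (Z_{G,m}^1)_{G,m}$ so defined lies in $\mathcal{Z}_{\preccurlyeq G}$ and extends $Z_{\prec G}^1$. Extension is automatic: since $\tilde Z_{\preccurlyeq G}$ restricts to $\tilde Z_{\prec G}$ on graphs $\prec G$, and $\tilde Z_{\prec G}^{>T} = Z_{\prec G}^1 \times \R_{>T}$ by hypothesis, the large-$t$ slice of $\tilde Z_{\preccurlyeq G}$ reproduces $Z_{\prec G}^1$ on those graphs. The cycle property $\hat\partial Z_{\preccurlyeq G}^1 = 0$ follows by restricting $\hat\partial \tilde Z_{\preccurlyeq G} = 0$ to the region $t \gg 0$, where every piece is $t$-independent and the $\R$-direction boundary contributions drop out; transversality and forgetful compatibility for $Z_{\preccurlyeq G}^1$ likewise descend from the corresponding properties of $\tilde Z_{\preccurlyeq G}$ restricted to a generic slice $t = T$.

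The main obstacle is precisely the coherence of the requirement that every perturbation be $t$-independent for $t \gg 0$: this must be arranged simultaneously for all $(G,m)$ in a way compatible with forgetful compatibility, so that the assembled large-$t$ slice is not merely a transversal, $\hat\partial$-closed collection but an honestly forget-compatible cycle. Since the forgetful relations are themselves products in the relevant loop-space factors, one can take the interpolating chains $\tilde Z^{\dagger}$ to be products in the $\R$-direction for $t \gg 0$ and propagate this through the perturbation inductively; verifying that this choice survives the transversality perturbation at each stage, and stays consistent across the faces $\partial_i m$ and the edge operations $\delta_{e'}$, is the part of the argument that requires care.
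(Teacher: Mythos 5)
Your core idea --- leave the $t \gg 0$ end of the isotopy unconstrained, require it to be a product there, and \emph{define} $Z^1_{\preccurlyeq G}$ as the large-$t$ slice --- is sound, and for graphs with $H(G) \neq \emptyset$ it gives a workable (if laborious) alternative to what the paper actually does. The paper disposes of that case in one line: first extend $Z^1_{\prec G}$ to a cycle $Z^1_{\preccurlyeq G}$ by Lemma \ref{nice-extension}, then connect the two now-pinned endpoints by Lemma \ref{nice-extension-isotopy}. That modular route avoids exactly the complication you flag at the end: since the endpoint is built \emph{first} as an honest forget-compatible cycle, one never needs the perturbations to be $t$-independent at infinity coherently across all $(G,m)$, so the issue you yourself call ``the part of the argument that requires care'' simply does not arise.

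The genuine gap is that you never address the case $H(G) = \emptyset$, which is the only reason this lemma is stated separately at all. Both Lemma \ref{nice-extension} and Lemma \ref{nice-extension-isotopy} carry the hypothesis $H(G) \neq \emptyset$, while the present lemma does not: as the remark immediately following it explains, when $H(G) = \emptyset$ an isotopy with \emph{both} endpoints prescribed is obstructed by a class in $H_0(\mathfrak{L}_G(M))$, and the whole point of leaving $Z^1_G$ free is to absorb that obstruction. In that case your inductive machinery is empty --- there are no edges, no markings $m$, no external edges, no transversality to diagonals, and no combination (\ref{perturbation-Z-isotopy}) to perturb $C^0$-close to --- so ``rerunning Lemma \ref{nice-extension-isotopy} with a free endpoint'' is not a construction there. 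What is needed instead is the paper's explicit chain-level formula: the diagonal crossings of the lower isotopies form a $0$-chain $\sum_{G',e'}\delta_{e'}\tilde{\overline{Z}}_{G'}$, and one sets $\overline{Z}^1_G := \overline{Z}^0_G + pr_*\bigl(\sum_{G',e'}\delta_{e'}\tilde{\overline{Z}}_{G'}\bigr)$, after which the connecting chain $\tilde{\overline{Z}}_G$ exists by inspection. Your free-endpoint philosophy is consistent with this formula, but the proposal as written neither identifies this case as the crux nor supplies the construction for it.
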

\begin{proof}
If $H(G) \neq \emptyset$, the lemma follows from Lemma \ref{nice-extension-isotopy} and Lemma \ref{nice-extension}.

If $H(G) = \emptyset$ , define $\overline{Z}_{G}^{1}$ as $\overline{Z}_{G}^{0}+ pr(\delta_{e'} \tilde{\overline{Z}}_{G'}  )$, where $pr: [0,1] \times M \rightarrow  M$ is the projection on the second factor. It is immediate to check that there exists 
$\tilde{\overline{Z}}_G$ such that the Lemma holds. 

\end{proof}

\begin{remark}
Given two nice $MC$-cycles $Z^0,Z^1$, we can try to use an inductive argument as above  to construct an isotopy $\tilde{Z}$ of $MC$-cycles between $Z^0$ and $Z^1$ . Lemma \ref{nice-extension-isotopy} implies that the obstruction to the existence of $\tilde{Z}$ are concentrated on the graphs $G$ with $H(G) = \emptyset$. Namely if $G$ is graph with $H(G)= \emptyset$, and $\tilde{\overline{Z}}_{\prec G}$ has been defined, $\tilde{\overline{Z}}_{G}$ exists if and only if
$  \sum_{e'} pr( \delta_{e'} \tilde{\overline{Z}}_{G'}) +    \overline{Z}_{G }^0 -  \overline{Z}_{G }^1 \in C_0(\mathfrak{L}_G(M))$
is homological trivial as zero-chain in   $\mathfrak{L}_G(M)$:
\begin{equation} 
 [ \sum_{e'} pr(  \delta_{e'} \tilde{\overline{Z}}_{G'} )+    \overline{Z}_{G }^0 -  \overline{Z}_{G }^1 ] =0  \in H_0(\mathfrak{L}_G(M)) .
 \end{equation}

\end{remark}  
 
 \begin{lemma}   \label{nice-extension-isotopy-B} 
 Let $G \in \mathfrak{G}$ with $H(G) \neq \emptyset$.
Let $Z$ be a nice $MC$-cycle, and let  $\tilde{Z} \in \tilde{ \mathcal{Z}}$ with   
 $\tilde{Z}_{\preccurlyeq G}^{< -T} = Z_{ \preccurlyeq G} \times (-\infty, -T)$ for  $T \gg 0$ .
 Assume
\begin{itemize}
\item $Z = \hat{\partial} B$, for some nice $MC$-one chain $B$;
\item   $\tilde{Z}_{\prec G} =   \hat{\partial} \tilde{B}_{\prec G}$  
 for some $\tilde{B}_{\prec G} \in  \tilde{ \mathcal{C}}_{\prec G} $, with $\tilde{B}_{\prec G}^{< -T} = B_{ \prec G} \times (-\infty, -T)$ for  $T \gg 0$.
\end{itemize} 
 There exists $\tilde{B}_{\preccurlyeq G}  \in  \tilde{ \mathcal{C}}_{\preccurlyeq G} $  extending $\tilde{B}_{\prec G} $ with  $\tilde{Z}_{\preccurlyeq G} =   \hat{\partial} \tilde{B}_{\preccurlyeq G}$ and 
 $\tilde{B}_{\preccurlyeq G}^{< -T} = B_{ \preccurlyeq G} \times (-\infty, -T)$ for  $T \gg 0$
 \end{lemma}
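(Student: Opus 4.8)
The plan is to run the same level-by-level induction on $l=|m|$ used in Lemmas \ref{nice-extension-B} and \ref{nice-extension-isotopy}, now applied to the isotopy-of-one-chains $\tilde{B}$. Since $H(G)\neq\emptyset$ by hypothesis, I expect no genuine obstruction to appear: as recorded in the Remark preceding this Lemma, the only obstructions to building such boundaries are concentrated on graphs with $H(G)=\emptyset$. Thus the entire content is to produce the chains $\tilde{B}_{G,m}$ and $\tilde{B}_{G,\,m\sqcup\{E(G)\}}$ inductively, while simultaneously enforcing the end normalization $\tilde{B}_{\preccurlyeq G}^{<-T}=B_{\preccurlyeq G}\times\R_{<-T}$ for $T\gg 0$.

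First I would set up the induction. Assume $\tilde{B}_{G,m'}$ and $\tilde{B}_{G,\,m'\sqcup\{E(G)\}}$ have been constructed for all $|m'|<l$, the base case $l=0$ coming from forgetful compatibility. If $G$ has external edges, I define $\tilde{B}_{G,m}$ directly by forgetful compatibility. If $G$ has no external edges, I combine the induction hypothesis with the two standing assumptions $\hat{\partial}\tilde{B}_{\prec G}=\tilde{Z}_{\prec G}$ and $Z=\hat{\partial}B$ to derive a $\partial$-closed identity of exactly the shape displayed in Lemmas \ref{nice-extension-B} and \ref{nice-extension-isotopy}, but carrying the inhomogeneous term $\tilde{Z}_{G,m}$ together with the prescribed end data.

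Next I would solve for $\tilde{B}_{G,m}$. Following the device used for $\tilde{Z}^{\dagger}_{G,m\sqcup\{E(G)\}}$ in Lemma \ref{nice-extension-isotopy}, I first fix a chain $\tilde{B}^{\dagger}_{G,\,m\sqcup\{E(G)\}}$ that already restricts to $B_{G,\,m\sqcup\{E(G)\}}\times\R_{<-T}$ for $T\gg 0$; this is where the prescribed end enters. Using the closed identity, a relative transversality argument then produces $\tilde{B}_{G,m}$ that is $C^0$-close to the explicit combination of the lower-level data, is transversal to $(\sqcap_{e\in E'}Diag_e)\times\R$ for each $E'\subset E(G)\setminus E_l$, restricts to $B_{G,m}\times\R_{<-T}$ for $t\ll 0$, and satisfies the boundary equation $\partial\tilde{B}_{G,m}+\sum_i(-1)^i\tilde{B}_{G,\partial_i m}+\sum_{e'}\delta_{e'}\tilde{B}_{G',m'}=\tilde{Z}_{G,m}$. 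A second application, starting from the analogue of (\ref{perturbation-B}), yields $\tilde{B}_{G,\,m\sqcup\{E(G)\}}$ with its own boundary identity. Finally I lift the whole construction to the multi-loop space $\mathfrak{L}_G(M)$ to restore forgetful compatibility, exactly as in the preceding lemmas.

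The main obstacle will be the \emph{relative} character of the transversality step: the chain is pinned down to equal $B\times\R_{<-T}$ in the far-negative region, so the perturbation achieving both transversality and the correct boundary must be carried out rel this prescribed end. The key consistency check is that the prescribed end already solves the equation there, since $\hat{\partial}(B\times\R_{<-T})=(\hat{\partial}B)\times\R_{<-T}=Z\times\R_{<-T}=\tilde{Z}^{<-T}$, the Borel--Moore factor $\R$ contributing no boundary. Hence the perturbation can be taken to vanish for $t\ll 0$, and the finite-dimensional family transversality argument is performed only on the complementary region, with $H(G)\neq\emptyset$ ensuring that the evaluation on the punctures labelled by $H(G)$ stays submersive and that no half-edge-free obstruction is encountered.
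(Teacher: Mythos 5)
Your proposal is correct and follows essentially the same route as the paper's proof: induction on $|m|$ with forgetful compatibility handling the base case and the external-edge case, the closed identity (\ref{inductive-partial-isotopy-B}) derived from the induction hypothesis, the auxiliary chain $\tilde{B}^{\dagger}_{G,\,m\sqcup\{E(G)\}}$ pinning the prescribed end, a $C^0$-close transversal solution of the inhomogeneous boundary equation followed by the construction of $\tilde{B}_{G,\,m\sqcup\{E(G)\}}$, and the lift to the multi-loop space. The only (cosmetic) difference is that you impose the end condition only at $-\infty$, as the statement requires, while the paper's proof also writes a condition at $+\infty$ in notation carried over from Lemma \ref{nice-extension-isotopy}; your explicit check that the prescribed end already solves the equation is a point the paper leaves implicit.
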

 
 \begin{proof}

 Let $l \in \Z_{\geq 0}$, and assume that we have constructed $\tilde{B}_{G,m}$ and $\tilde{B}_{G, m \sqcup \{ E(G) \}}$ for each $m$ with $|m| <l$ and $E(G) \notin m$. In the case $l=0$,  $\tilde{B}_{G, E(G)} $ is defined using forgetful compatibility.

If there exists external edges use forgetful compatibility to define  $\tilde{B}_{G,m}$ and $\tilde{B}_{G, m \sqcup \{ E(G) \}}$. 



Assume  there are not external edges.
From the induction assumption we obtain the identity
\begin{equation} \label{inductive-partial-isotopy-B}
(-1)^l \partial( \sum_i (-1)^i \tilde{B}_{G, \partial_i m \sqcup \{ E(G) \}} + \sum_{e'}  \delta_{e'} \tilde{B}_{G', m' \sqcup \{ E(G)\}}  - \tilde{Z}_{G,  m \sqcup \{ E(G) \}}  ) + 
  \sum_i (-1)^i \tilde{B}_{G, \partial_i m } + \sum_{e'}  \delta_{e'} \tilde{B}_{G',m' }  = \tilde{Z}_{G,  m }. 
    \end{equation}

Pick a chain $\tilde{B}_{G,m \sqcup \{ E(G) \} }^{\dagger}$ which agrees with $ B_{G,m \sqcup \{ E(G) \} }^0 \times \R_{<-T}$,  $ B_{G,m \sqcup \{ E(G) \} }^1 \times \R_{>T}$  for $T \gg 0$. From
 (\ref{inductive-partial-isotopy-B}) there exists $ \tilde{B}_{G,  m  } $ 
 close on the $C^0$-topology to 
 
 \begin{equation} \label{perturbation-B-isotopy}
 (-1)^l( \sum_i (-1)^i \tilde{B}_{G, \partial_i m \sqcup \{ E(G) \}} + \sum_{G'/e'=G}  \delta_{e'} \tilde{B}_{G', m' \sqcup \{ E(G)\}} - \tilde{Z}_{G,  m \sqcup \{ E(G) \}} + \partial \tilde{B}_{G,m \sqcup \{ E(G) \} }^{\dagger} ); 
 \end{equation}
 such that
 \begin{itemize}
 \item $$\tilde{B}_{G,m}^{<-T} = B_{G,m}^0 \times \R_{< -T}$$
$$\tilde{B}_{G,m}^{>T} = B_{G,m}^1 \times \R_{> T} $$
for $T \gg 0$.
 \item  $\tilde{B}_{G,  m  }$ is transversal to $ \sqcap_{e \in E'} Diag_e \times \R $  for each  $E' \subset E(G) \setminus E_l$;
\item $ \partial \tilde{B}_{G,  m  }  + \sum_i (-1)^i \tilde{B}_{G, \partial_i m } + \sum_{e'}  \delta_{e'} \tilde{B}_{G',m' }  = \tilde{Z}_{G,  m }  $;

\end{itemize}


From (\ref{perturbation-B-isotopy}) we obtain that there exists  $ \tilde{B}_{G,  m \sqcup \{ E(G) \}}$ such that 
$$\partial \tilde{B}_{G,  m \sqcup \{ E(G) \}}  = (-1)^{l} \tilde{B}_{G,  m  } -  \sum_i (-1)^i \tilde{B}_{G, \partial_i m \sqcup \{ E(G) \}} -  \sum_{G', e'}  \delta_{e'} \tilde{B}_{G', m' \sqcup \{ E(G)\}} + \tilde{Z}_{G,  m \sqcup \{ E(G) \}} .$$

 \end{proof}
 

 \begin{lemma} \label{deformation-cycles}
 Let $\tilde{\mathbf{w}}= ( \mathbf{w}_t  )_{t \in [0,1]}$ be one parameter family of multi-loops such that $\mathbf{w}_t$ is an embedded link for $t \in [0,1)$.
Let $Z \in \mathcal{Z}_{\mathbf{w}_0}$ be a nice $MC$-cycle with limit-support equal to $\mathbf{w}_0$. There exists $\tilde{Z} \in \tilde{ \mathcal{Z}}_{\tilde{\mathbf{w}}}$ with 
$$\tilde{Z}^{< -T} = Z \times \{ \R_{< -T} \}  \text{   for   }T  \gg 0 .$$ 
The homology class $[\tilde{Z}] \in MCH(M, \tilde{\mathbf{w}})$ is determined by the homology class $[Z] \in MCH(M, \mathbf{w})$.
 
 \end{lemma}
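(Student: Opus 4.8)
The plan is to construct $\tilde Z$ by induction over the partial order $\prec$ on decorated graphs, in exactly the manner of the proof of Lemma \ref{basic-cycle}, arranging at every stage that the limit-support follows the prescribed family $\tilde{\mathbf w}=(\mathbf w_t)_{t\in[0,1]}$. I fix a continuous realization of the link isotopy $t\mapsto \mathbf w_t$ and build $\tilde Z$ as a sequence $(\tilde Z^n)_n$ of isotopies of $MC$-cycles with $\lim\tilde{Supp}(\tilde Z^n_{G,m})=\tilde{\mathbf w}$ for every $(G,m)$, normalized so that $\tilde Z^{<-T}=Z\times\R_{<-T}$ for $T\gg 0$. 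Thus $\tilde Z$ transports the given cycle $Z$ along the family, starting at $\mathbf w_0$ and following $\mathbf w_t$.

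For graphs $G$ with $H(G)\neq\emptyset$ the inductive step is supplied directly by Lemma \ref{extension-isotopies-inductive}: given the isotopy $\tilde Z_{\prec G}$ together with the extension of the initial cycle $Z_{\preccurlyeq G}$, that lemma produces $\tilde Z_{\preccurlyeq G}$ and a compatible extension of the terminal cycle. The transversality and $C^0$-closeness used there, together with the perturbation argument of Lemma \ref{nice-extension-isotopy}, keep the limit-support equal to $\tilde{\mathbf w}$, exactly as in Lemma \ref{basic-cycle}.

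The only genuine obstruction occurs on graphs $G$ with $H(G)=\emptyset$. As recorded in the Remark preceding Lemma \ref{nice-extension-isotopy-B}, extending $\tilde{\overline Z}_{\prec G}$ to $\tilde{\overline Z}_G$ is possible precisely when the zero-chain $\sum_{e'}pr(\delta_{e'}\tilde{\overline Z}_{G'})+\overline Z_G^0-\overline Z_G^1$ is null-homologous in $H_0(\mathfrak L_G(M))$. This is exactly where the hypothesis that $\mathbf w_t$ is an embedded link for $t\in[0,1)$ enters: the path $t\mapsto\mathbf w_t$ determines a one-chain in $\mathfrak L_G(M)$ whose boundary is the obstruction zero-chain, and since the family stays embedded the limit-support never splits off extra components, so the two endpoints land in the matching connected components of $\mathfrak L_G(M)$. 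Hence the obstruction class vanishes and $\tilde{\overline Z}_G$ exists. I expect this $H_0(\mathfrak L_G(M))$ computation, together with the bookkeeping of limit-supports across the possibly singular endpoint $t=1$, to be the main difficulty.

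For the final assertion I show that $[\tilde Z]$ depends only on $[Z]$. If $Z^0,Z^1$ are nice $MC$-cycles with limit-support $\mathbf w_0$ and $Z^1-Z^0=\hat\partial B$ for a nice $MC$-one chain $B$, I run the same induction using Lemma \ref{nice-extension-isotopy-B} on graphs with $H(G)\neq\emptyset$ to build $\tilde B$ with $\hat\partial\tilde B=\tilde Z^1-\tilde Z^0$ and $\tilde B^{<-T}=B\times\R_{<-T}$; on graphs with $H(G)=\emptyset$ the associated $H_0(\mathfrak L_G(M))$-obstruction again vanishes because the family $\tilde{\mathbf w}$ furnishes the connecting one-chain. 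The same argument gives independence of the choices made in the inductive construction. Therefore $[\tilde Z]\in MCH(M,\tilde{\mathbf w})$ is determined by $[Z]\in MCH(M,\mathbf w_0)$.
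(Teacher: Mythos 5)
Your overall skeleton coincides with the paper's proof: induction over $\prec$, the extension lemmas (Lemma \ref{nice-extension-isotopy} / Lemma \ref{extension-isotopies-inductive}, and Lemma \ref{nice-extension-isotopy-B} for the homology statement) on graphs with $H(G)\neq\emptyset$, and a separate treatment of graphs with $H(G)=\emptyset$. The divergence, and the genuine gap, is in how you handle the $H(G)=\emptyset$ case. You reduce it to the vanishing of an obstruction class in $H_0(\mathfrak{L}_G(M))$ and argue this vanishes because the embedded family supplies a connecting one-chain. But in the isotopy setting this is not where the difficulty lies: isotopy chains are Borel--Moore chains on $\mathfrak{L}_G(M)\times\R$ with only the negative end pinned, so a primitive of the relevant closed zero-chain always exists (it may simply escape to $t=+\infty$); no $H_0$ computation is needed for bare existence. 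What the statement actually requires --- for $\tilde Z$ to lie in $\tilde{\mathcal{Z}}_{\tilde{\mathbf{w}}}$, and for $\tilde B$ to witness homological triviality in the nice sense --- is that the primitive can be chosen with limit-support exactly $\tilde{\mathbf{w}}$, with the prescribed product form $\overline{Z}_G\times\R_{<-T}$ at the negative end, and compatibly with the interpolating isotopies of isotopies $\tilde{\tilde{Z}}^{int,n}$. This support control is the entire content of the step; you explicitly defer it (``I expect this \ldots to be the main difficulty'') without providing an argument, whereas the paper isolates it as Lemma \ref{loops-contraction} and applies that lemma to $\sum_{G',e'}\delta_{e'}\tilde{\overline{Z}}^n_{G'}-\partial\tilde{\overline{Z}}^{\dagger,n}_G$ after subtracting a reference chain $\tilde{\overline{Z}}^{\dagger,n}_G$ with the correct negative end (and likewise in the $\hat\partial\tilde B=\tilde Z$ argument).

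A secondary inaccuracy: your claim that the path $t\mapsto\mathbf{w}_t$ ``determines a one-chain whose boundary is the obstruction zero-chain'' is not literally true. By the inductive hypothesis the chain $\sum_{e'}\delta_{e'}\tilde{\overline{Z}}_{G'}$ is only $\epsilon$-close (in the limit-support sense) to multi-loops built from $\tilde{\mathbf{w}}$; it is not equal to a difference of the $\mathbf{w}_t$, it may carry rational multiplicities, and near $t=1$ the family degenerates to the current $\mathfrak{w}$. Turning ``close to $\tilde{\mathbf{w}}$'' into an actual bounding chain with limit-support $\tilde{\mathbf{w}}$ is precisely the content of Lemma \ref{loops-contraction}, so your argument as written presupposes the statement it needs to prove. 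To repair the proposal, you should either invoke that lemma at both places where you currently invoke the $H_0$ argument, or supply a proof of its content.
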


 \begin{proof}

We proceed by induction on  graphs. Assume that we have constructed $\tilde{\overline{Z}}_{\prec G}^n$  and $\tilde{\tilde{\overline{Z}}}_{\prec G}^{int,n}$ for each $n$.


If $H(G) \neq \emptyset$ use Lemma \ref{nice-extension-isotopy} to define  $\tilde{Z}_{\preccurlyeq G}^n$.
From the analogous lemma for extension of isotopy of isotopies, we obtain 
$\tilde{\tilde{\overline{Z}}}_{\preccurlyeq G}^{int,n}$.

 If $H(G) = \emptyset$  by induction we have 
  $$\partial (\sum_{G',e'} \delta_{e'}   \tilde{\overline{Z}}_{G' } ) = 0 .$$
Let $\tilde{\overline{Z}}_{G}^{\dagger,n}$
such that
$\tilde{\overline{Z}}_{G}^{\dagger, n,<-T} = \overline{Z}_{G}^n \times \R_{< -T}$.
  Apply Lemma \ref{loops-contraction} to
$ \sum_{G',e'} \delta_{e'}   \tilde{\overline{Z}}_{G' }^n - \partial \tilde{\overline{Z}}_{G}^{\dagger,n}$
to obtain $ \tilde{\overline{Z}}_{G}^{n \prime} $ such that 
$$\partial {\tilde{\overline{Z}}_{G}^{n \prime}}=\sum_{G',e'} \delta_{e'}   \tilde{\overline{Z}}_{G'}^n - \partial \tilde{\overline{Z}}_{G}^{\dagger,n}.$$
Set 
$$ \tilde{\overline{Z}}_{G}^n=  {\tilde{\overline{Z}}_{G}^{n \prime}} + \tilde{\overline{Z}}_{G}^{\dagger,n} .$$

It is easy to check that there exists $\tilde{\tilde{\overline{Z}}}_{G}^{int,n}$ extending   ${\tilde{\overline{Z}}}_{G}^{int,n}$.



 Now assume $Z = \hat{\partial} B$. Let  $\tilde{Z} \in \tilde{\mathcal{Z}}$ with $\tilde{Z}^{>T}=  Z \times \R_{>T}$. We need to show that there exists $\tilde{B} \in \tilde{\mathcal{C}_1} $ such that $\tilde{Z} = \hat{\partial} \tilde{B}$ and $\tilde{B}^{<-T}=  B \times \R_{<-T}$ for $T \gg 0$. 
We proceed by induction on the graphs again. Suppose we have defined $\tilde{\overline{B}}_{\prec G}^n$ such that $\hat{\partial} \tilde{\overline{B}}_{\prec G}^n = \tilde{\overline{Z}}_{\prec G}^{n} $, for each $n$.
 
  If $H(G) \neq  \emptyset$ apply Lemma \ref{nice-extension-isotopy-B} to define $\tilde{\overline{B}}_{\preccurlyeq G}^n$. 
 
 Assume $H(G) = \emptyset$.
  By induction we have 
  $$\partial (\delta_{e'}   \tilde{\overline{B}}_{G' } +   \tilde{\overline{Z}}_{G } ) = 0 .$$
  Let $ \tilde{\overline{B}}_{G}' $ be  the  one-chain isotopy obtained applying 
   Lemma \ref{loops-contraction} to $\delta_{e'}   \tilde{\overline{B}}_{G'} +   \tilde{\overline{Z}}_{G } + \partial \tilde{\overline{B}}_G^{\dagger}$ . Set $ \tilde{\overline{B}}_G =  \tilde{\overline{B}}_G' +   \tilde{\overline{B}}_G^{\dagger} $.


\end{proof}

  \begin{lemma} \label{loops-contraction}
  Let $\tilde{\overline{Q}}_n \in C_k( \mathcal{L}(M) \times \R )$ be a sequence of closed $k$-chains with
  \begin{itemize}
      \item $\tilde{\overline{Q}}_n ^{< T} = 0 $ for $T << 0$;
      \item $\lim Supp (\tilde{\overline{Q}}_n ) =  \tilde{\mathbf{w}}$.
  \end{itemize}
  
  There exist a sequence of $k+1$-chain $\tilde{\overline{R}}_n  \in C_{k+1}( \mathcal{L}(M) \times \R )$ such that 
 \begin{itemize}
     \item $\partial \tilde{\overline{R}}_n =\tilde{\overline{Q}}_n $;
      \item $\tilde{\overline{R}}_n ^{< T} = 0 $ for $T << 0$;
      \item $\lim Supp (\tilde{\overline{R}}_n ) =  \tilde{\mathbf{w}}$.
  \end{itemize}
    \end{lemma}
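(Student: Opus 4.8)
The plan is to produce the primitive $\tilde{\overline{R}}_n$ by coning off $\tilde{\overline{Q}}_n$ along a deformation retraction of a neighborhood of $\tilde{\mathbf{w}}$ onto $\tilde{\mathbf{w}}$ inside $\mathcal{L}(M) \times \R$. Fix $\epsilon > 0$. By the limit-support hypothesis, for $n \gg 0$ the chain $\tilde{\overline{Q}}_n$ is supported in the $\epsilon$-neighborhood $U_\epsilon$ of $\tilde{\mathbf{w}}$. First I would construct, for $\epsilon$ small, a deformation retraction $\rho_s : U_\epsilon \to U_\epsilon$, $s \in [0,1]$, with $\rho_1 = \mathrm{id}$ and $\rho_0$ mapping onto $\tilde{\mathbf{w}}$, that leaves the $\R$-coordinate fixed. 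Concretely, using the exponential chart of $\mathcal{L}(M)$ at each reference loop $\mathbf{w}^t$, a loop $\epsilon$-close to $\mathbf{w}^t$ is written as $\exp_{\mathbf{w}^t}(v)$ for a small section $v$ of $(\mathbf{w}^t)^*TM$; rescaling $v \mapsto s\,v$ contracts it back to $\mathbf{w}^t$, and a local slice for the $Diff^+(\mathbb{S}^1)$-action fixes the reparametrization ambiguity. This retraction is performed fiberwise over the time parameter $t$, so it keeps $t$ fixed.

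Next I would set $\tilde{\overline{R}}_n := \pm\,\rho_*\!\left([0,1] \times \tilde{\overline{Q}}_n\right)$, the image under $\rho$ of the prism on $\tilde{\overline{Q}}_n$. The prism (chain-homotopy) identity reads
\[
\partial \tilde{\overline{R}}_n = \tilde{\overline{Q}}_n - (\rho_0)_* \tilde{\overline{Q}}_n \mp \rho_*\!\left([0,1] \times \partial \tilde{\overline{Q}}_n\right).
\]
Since $\tilde{\overline{Q}}_n$ is closed the last term drops, and $(\rho_0)_* \tilde{\overline{Q}}_n$ is a $k$-chain whose image lies in the one-dimensional arc $\tilde{\mathbf{w}}$; for $k \geq 1$ such a chain is degenerate and hence zero in the smooth-chain complex, giving $\partial \tilde{\overline{R}}_n = \tilde{\overline{Q}}_n$ exactly. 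Because $\rho$ preserves $U_\epsilon$ and fixes $t$, the chain $\tilde{\overline{R}}_n$ is supported in $U_\epsilon$ and vanishes for $t < T$ whenever $\tilde{\overline{Q}}_n$ does. Letting $\epsilon \to 0$ as $n \to \infty$ yields $\lim Supp(\tilde{\overline{R}}_n) \subseteq \tilde{\mathbf{w}}$, and the reverse inclusion follows from $\mathrm{Supp}(\partial \tilde{\overline{R}}_n) \subseteq \mathrm{Supp}(\tilde{\overline{R}}_n)$ together with $\partial \tilde{\overline{R}}_n = \tilde{\overline{Q}}_n$, whose limit support is already $\tilde{\mathbf{w}}$; hence equality.

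The hard part will be making the retraction rigorous in the infinite-dimensional, singular loop space: one must (a) set up the $\exp$-chart compatibly with the $Diff^+(\mathbb{S}^1)$-quotient and with the generator model $(N,(w,(t_i)_i))$ of Section \ref{chain-loop-section}, (b) handle the strata near constant loops, where that model uses the $\mathbb{S}^1$-blow-up $\hat{N}$, and (c) control the endpoint of the family $\tilde{\mathbf{w}}$ where the reference loop may cease to be embedded. For (b) I would carry out the cone construction $\mathbb{S}^1$-equivariantly on the blow-ups so that it is compatible with the gluing in (\ref{delta-loop0}); for (c) I would observe that $\rho$ is only needed on the support of $\tilde{\overline{Q}}_n$, which is uniformly $\epsilon$-close to $\tilde{\mathbf{w}}$, and that the fiberwise $\exp$-contraction extends continuously across the exceptional fiber by a limiting argument. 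The degeneracy step in the second paragraph is the conceptual crux, and it is precisely here that the one-dimensionality of the limit support $\tilde{\mathbf{w}}$ is used.
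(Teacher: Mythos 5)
First, a point of context: the paper states Lemma \ref{loops-contraction} with no proof at all (it is asserted bare and then invoked in Lemma \ref{deformation-cycles}), so there is no ``paper proof'' to compare against. Your cone-off-via-retraction strategy is the natural way to supply the missing argument, and most of it is sound: the prism identity, the preservation of the $t$-coordinate and of the $\epsilon$-neighborhood, and the support bookkeeping are all fine, and you correctly flag the loop-space technicalities (the $Diff^+(\mathbb{S}^1)$ quotient, the blow-up model at constant loops, and --- importantly --- the fact that the paper's notion of $\epsilon$-closeness allows extra components near constant loops, so the retraction must also shrink those).

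However, the step you call the ``conceptual crux'' is wrong as stated, and it fails exactly in the cases the paper uses. You claim that $(\rho_0)_*\tilde{\overline{Q}}_n$, having image in the one-dimensional arc $\tilde{\mathbf{w}}$, is degenerate and hence zero for $k \geq 1$. For $k=1$ this is false: a $1$-chain mapping into a $1$-dimensional arc can run along the arc with nonzero multiplicity and is not degenerate. The conclusion $(\rho_0)_*\tilde{\overline{Q}}_n = 0$ can still be rescued for $k=1$, but by a different argument: a closed (Borel--Moore) $1$-chain supported on an arc fibered over the $t$-line, vanishing for $t \ll 0$, must have locally constant multiplicity with no jumps and hence vanish identically. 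Worse, your proof says nothing about $k=0$, where ``closed'' is vacuous: there $(\rho_0)_*\tilde{\overline{Q}}_n$ is in general a genuinely nonzero $0$-chain on the arc (e.g.\ a single point), and the prism construction alone cannot produce a primitive --- indeed a compact chain can never bound a $0$-chain of nonzero degree. The repair is to add a second cone: push $(\rho_0)_*\tilde{\overline{Q}}_n$ along the arc toward $t \to +\infty$, i.e.\ take $\tilde{\overline{R}}_n = \rho_*([0,1]\times \tilde{\overline{Q}}_n) \pm (\text{the ray-cone of } (\rho_0)_*\tilde{\overline{Q}}_n \text{ inside } \tilde{\mathbf{w}})$. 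This uses precisely the fact that the chains in this section are Borel--Moore and are only required to vanish for $t \ll 0$, not for $t \gg 0$. This is not an edge case: in the proof of Lemma \ref{deformation-cycles} the lemma is applied to $\sum_{G',e'}\delta_{e'}\tilde{\overline{Z}}^n_{G'} - \partial\tilde{\overline{Z}}^{\dagger,n}_G$ (a $0$-chain, since $\tilde{\overline{Z}}_G$ has dimension one when $H(G)=\emptyset$) and to $\delta_{e'}\tilde{\overline{B}}_{G'} + \tilde{\overline{Z}}_G + \partial\tilde{\overline{B}}_G^{\dagger}$ (a $1$-chain), so $k=0$ and $k=1$ are exactly the cases that matter.
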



  
  \begin{proposition} \label{transfer-map-proposition}
  Let  $\tilde{\mathbf{w}}= \{ \mathbf{w}^t  \}_{t \in [0,1]}$ be one parameter family of multi-loops such that $\mathbf{w}_t$ is an embedded link for $t \in [0,1)$ . To $\tilde{\mathbf{w}}$ it is associate a map
 \begin{equation} \label{transfer-map}
   transfer_{\tilde{\mathbf{w}}}: MCH(M, \mathbf{w}^0)  \rightarrow MCH(M, \mathfrak{w} ) .
   \end{equation}  
   
  where $\mathfrak{w}$ is the one-dimensional current represented by $\mathbf{w}^1$. 
   
   If  $(  \mathbf{w}^{t,s} )_{t,s}$ is a two parameter family of multi-loops, such that
   \begin{itemize}
       \item  $ \mathbf{w}^{t,s}$    is an embedded link if $t \neq 1$ ; 
       \item   $ \mathbf{w}^{0,s}= \mathbf{w}^{0}$ for each $s \in [0,1$ ;
       \item $ \mathbf{w}^{1,s}= \mathfrak{w}$  for each $s \in [0,1]$.
   \end{itemize}
     Then
   \begin{equation} \label{tranfer-indipendent}
            transfer_{(  \mathbf{w}^{t,0} )_{t}} = transfer_{(  \mathbf{w}^{t,1} )_{t}}  .
   \end{equation}
        \end{proposition}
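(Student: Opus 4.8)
The plan is to define the transfer map directly from Lemma~\ref{deformation-cycles} and then to deduce the homotopy invariance (\ref{tranfer-indipendent}) by running a two-parameter version of the same construction.

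\emph{Definition of the transfer map.} Given a class in $MCH(M, \mathbf{w}^0)$, I would first choose a nice $MC$-cycle representative $Z$ with $\lim Supp(Z) = \mathbf{w}^0$. Applying Lemma~\ref{deformation-cycles} to the family $\tilde{\mathbf{w}}$ produces an isotopy $\tilde{Z} \in \tilde{\mathcal{Z}}_{\tilde{\mathbf{w}}}$ with $\tilde{Z}^{<-T} = Z \times \R_{<-T}$ for $T \gg 0$; its $t \to +\infty$ end is a nice $MC$-cycle $Z^1$ with $\tilde{Z}^{>T} = Z^1 \times \R_{>T}$ and $\lim Supp(Z^1) = \mathbf{w}^1$, so that $[Z^1] \in MCH(M, \mathfrak{w})$. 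I would then set $transfer_{\tilde{\mathbf{w}}}([Z]) := [Z^1]$, which defines the map (\ref{transfer-map}). Well-definedness, namely independence of the representative $Z$ and of the chosen extension $\tilde{Z}$, is precisely the last assertion of Lemma~\ref{deformation-cycles}: the class $[\tilde{Z}]$ is determined by $[Z]$, and restriction to the $t \to +\infty$ end sends isotopic isotopies to isotopic cycles, so $[Z^1]$ depends only on $[Z]$.

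\emph{Reduction of the invariance.} Fix $[Z]$ with representative $Z$. For $s = 0$ and $s = 1$ the families $(\mathbf{w}^{t,0})_t$ and $(\mathbf{w}^{t,1})_t$ yield isotopies $\tilde{Z}^0, \tilde{Z}^1$ sharing the common $t \to -\infty$ end $Z$ and having $t \to +\infty$ ends $Z^{1,0}, Z^{1,1}$ that compute the two transfer images. To establish (\ref{tranfer-indipendent}) it is enough to produce an isotopy of $MC$-cycles between $Z^{1,0}$ and $Z^{1,1}$, i.e. $[Z^{1,0}] = [Z^{1,1}]$ in $MCH(M, \mathfrak{w})$. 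I would obtain such an isotopy as the $t \to +\infty$ restriction of an isotopy of isotopies $\tilde{\tilde{Z}}$, built over the two-parameter family $(\mathbf{w}^{t,s})$, that restricts to $\tilde{Z}^0$ at $s = 0$, to $\tilde{Z}^1$ at $s = 1$, and to the constant family $Z \times \R_t \times \R_s$ as $t \to -\infty$.

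\emph{Construction of $\tilde{\tilde{Z}}$.} I would construct $\tilde{\tilde{Z}}$ by induction on the poset of decorated graphs, lifting the proof of Lemma~\ref{deformation-cycles} by one parameter. For graphs $G$ with $H(G) \neq \emptyset$ the chains are supplied by forgetful compatibility together with the two-parameter analogues of Lemma~\ref{nice-extension-isotopy} and Lemma~\ref{nice-extension-isotopy-B} (extension of isotopies of isotopies), the prescribed boundary behavior at $s = 0, 1$ and at $t \to -\infty$ being imposed exactly as in those lemmas. As in the remark following Lemma~\ref{extension-isotopies-inductive}, all obstructions are concentrated on the graphs $G$ with $H(G) = \emptyset$, where $\mathfrak{L}_G(M) = \mathfrak{L}(M)^{V(G)}$. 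There the inductively determined datum is a closed chain on $\mathfrak{L}_G(M) \times \R_t$ carrying the additional $\R_s$ direction, with limit support pinned to $\tilde{\mathbf{w}}$; to it I would apply Lemma~\ref{loops-contraction} to obtain a bounding chain with the same limit support, which defines $\tilde{\tilde{Z}}_G$.

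\emph{Main obstacle.} The crux is the case $H(G) = \emptyset$: the bounding problem then lives on the free loop space $\mathfrak{L}(M)^{V(G)}$, whose $H_0$ is free on $V(G)$-tuples of conjugacy classes of $\pi_1(M)$ and is generally nonzero, so an arbitrary closed chain need not bound. The construction nevertheless closes up because the relevant chain vanishes for $t \ll 0$ (the $t \to -\infty$ end is the fixed cycle $Z$) and has limit support pinned to the family $\tilde{\mathbf{w}}$ of embedded links, whose component loops stay in fixed free-homotopy classes for $t \neq 1$; these are exactly the hypotheses under which Lemma~\ref{loops-contraction} supplies a contraction with the correct limit support. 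The one point requiring separate attention is the degenerate slice $t = 1$, where $\mathbf{w}^{1,s} = \mathfrak{w}$ need not be an embedded link; since this slice is independent of $s$, the isotopy of isotopies is forced to be limit-constant there and contributes no further obstruction, so the induction proceeds to completion.
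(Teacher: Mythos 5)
Your proposal is correct and follows essentially the same route as the paper: the paper's (very terse) proof likewise obtains existence of the transfer map directly from Lemma~\ref{deformation-cycles}, and proves the invariance (\ref{tranfer-indipendent}) by rerunning the argument of Lemma~\ref{deformation-cycles} one parameter up, on isotopies of isotopies, using Lemma~\ref{extension-isotopies-isotopies} as the extension step. Your write-up simply makes explicit the details the paper leaves implicit (the restriction-to-the-$t\to+\infty$-end definition, the concentration of obstructions on graphs with $H(G)=\emptyset$, and the use of Lemma~\ref{loops-contraction} there), all of which is consistent with the paper's intended argument.
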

  \begin{proof}
  The existence of the map (\ref{transfer-map}) is an immediate consequence of Lemma  (\ref{deformation-cycles}). 
  
  The identity (\ref{tranfer-indipendent}) follows applying the same argument of  Lemma  \ref{deformation-cycles} to isotopy of isotopy using Lemma  \ref{extension-isotopies-isotopies}.
    \end{proof}
  
 


Given a two parameter family of multi-loops $\tilde{\tilde{ w}}= (  \mathbf{w}^{t,s} )_{t,s}$ we can define the set of isotopies of isotopies $\tilde{\tilde{ \mathcal{Z}}}_{\tilde{\tilde{ w}}}$ with limit support on $\tilde{\tilde{ w}} $   analogously to what we have done above in the case of isotopies.
The following Lemma can be considered as a one parameter version of Lemma \ref{extension-isotopies-inductive}. 
 
 \begin{lemma} \label{extension-isotopies-isotopies}


Thus an isotopy of  
$\tilde{\tilde{Z}} _{\prec G} \in  \tilde{\tilde{ \mathcal{Z}}}_{\prec G} $ be an isotopy of isotopies such that
\begin{itemize}
\item $\tilde{\tilde{Z}} _{\prec G} \cap \{s <-S \} =  Z_{ \prec G}^{0, \bullet} \times \R_{s<-S}  \text{  for   }S \gg 0$
\item  $\tilde{\tilde{Z}} _{\prec G} \cap  \{t <-T \} =  Z_{ \prec G}^{ \bullet, 0 } \times \R_{t < -T}  \text{  for   }T \gg 0$
\item $\tilde{\tilde{Z}} _{\prec G} \cap \{ t>T \} =  Z_{ \prec G}^{ \bullet, 1} \times  \R_{t >T}  \text{  for   }T \gg 0$
\end{itemize}

Let  $Z_{ \preccurlyeq G}^{0, \bullet} , Z_{ \preccurlyeq G}^{ \bullet ,0 }, Z_{ \preccurlyeq G}^{ \bullet, 1} \in \tilde{ \mathcal{Z}}_{\preccurlyeq G} $ extending  $Z_{ \prec G}^{0 ,\bullet} ,Z_{ \prec G}^{ \bullet ,0 }, Z_{ \prec G}^{ \bullet, 1} $.

There exists $\tilde{\tilde{Z}}_{\preccurlyeq G} \in  \tilde{\tilde{ \mathcal{Z}}}_{\preccurlyeq G} $ extending 
$\tilde{\tilde{Z}} _{\prec G} $ such that 
\begin{itemize}
\item $\tilde{\tilde{Z}} _{\preccurlyeq G} \cap \{s <-S \} =  Z_{ \preccurlyeq G}^{0 ,\bullet} \times  \R_{s<-S }  \text{  for   }S \gg 0$
\item  $\tilde{\tilde{Z}} _{\preccurlyeq G_0} \cap \{t <-T \}=  Z_{ \preccurlyeq G}^{ \bullet ,0 } \times  \R_{t < -T } \text{  for   }T \gg 0$
\item $\tilde{\tilde{Z}} _{\preccurlyeq G} \cap \{t <T \} =  Z_{ \preccurlyeq G}^{ \bullet, 1} \times \R_{t>T } \text{  for   }T \gg 0$
\end{itemize}

 \end{lemma}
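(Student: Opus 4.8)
The plan is to run, for two-parameter families of chains on $M^{H(G)} \times \R_t \times \R_s$, the same inductive template that proves Lemma \ref{nice-extension-isotopy}; indeed this statement is to Lemma \ref{extension-isotopies-inductive} exactly what the isotopy-of-isotopies extension is to the isotopy extension, so the skeleton of the argument is already in place. I would fix $G$ with $H(G) \neq \emptyset$ and induct on $l = |m|$, assuming $\tilde{\tilde{Z}}_{G,m}$ and $\tilde{\tilde{Z}}_{G, m \sqcup \{E(G)\}}$ constructed for all $|m| < l$, with the base case $l=0$ and the case of a removable external edge both settled by forgetful compatibility precisely as in the earlier lemmas. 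In those cases $\tilde{\tilde{Z}}_{G,m}$ and $\tilde{\tilde{Z}}_{G, m \sqcup \{E(G)\}}$ are read off directly from the data already fixed on smaller graphs, so nothing must be built by hand.

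The substance is the inductive step with no external edges. Here I would first extract from the induction hypothesis the two-parameter analogue of the identity (\ref{inductive-partial-isotopy}), namely
$$ (-1)^l \partial\Big( \sum_i (-1)^i \tilde{\tilde{Z}}_{G, \partial_i m \sqcup \{E(G)\}} - \sum_{e'} \delta_{e'} \tilde{\tilde{Z}}_{G', m' \sqcup \{E(G)\}} \Big) - \sum_i (-1)^i \tilde{\tilde{Z}}_{G, \partial_i m} + \sum_{e'} \delta_{e'} \tilde{\tilde{Z}}_{G', m'} = 0, $$
which guarantees that the right-hand combination forming the prospective boundary of $\tilde{\tilde{Z}}_{G,m}$ is closed. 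Next I would pick a collar chain $\tilde{\tilde{Z}}^{\dagger}_{G, m \sqcup \{E(G)\}}$ matching the three prescribed products $Z^{0,\bullet}_{G,\dots} \times \R_{s<-S}$, $Z^{\bullet,0}_{G,\dots} \times \R_{t<-T}$, and $Z^{\bullet,1}_{G,\dots} \times \R_{t>T}$ near the corresponding faces. Subtracting $\partial$ of this collar and running the relative perturbation/transversality argument of Lemma \ref{nice-extension-isotopy}, I obtain $\tilde{\tilde{Z}}_{G,m}$ that is $C^0$-close to the appropriate linear combination, is transversal to $(\sqcap_{e \in E'} Diag_e) \times \R_t \times \R_s$ for every $E' \subset E(G) \setminus E_l$, restricts to the three prescribed products on the prescribed faces, and satisfies the required chain equation; the companion chain $\tilde{\tilde{Z}}_{G, m \sqcup \{E(G)\}}$ is then produced from the same perturbation datum, and the whole construction is lifted to $\mathfrak{L}_G(M) \times \R_t \times \R_s$ via forgetful compatibility as usual.

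The one genuinely new point, and the step I expect to be the main obstacle, is the consistency of the collar $\tilde{\tilde{Z}}^{\dagger}$ along the two corners where the $\{s<-S\}$ face meets the $\{t<-T\}$ and $\{t>T\}$ faces (the faces $\{t<-T\}$ and $\{t>T\}$ being disjoint, as in the one-parameter case, cause no difficulty). Matching all three products simultaneously forces $Z^{0,\bullet}$ to agree with $Z^{\bullet,0}$ at the corner $s,t \to -\infty$ and with $Z^{\bullet,1}$ at the corner $s \to -\infty$, $t \to +\infty$, i.e. both families must limit to the common cycles $Z^{0,0}$ and $Z^{0,1}$ respectively. This is exactly the corner-compatibility built into the definition of an isotopy of isotopies $\tilde{\tilde{\mathcal{Z}}}$, and it is inherited by the fixed $\preccurlyeq G$-extensions $Z^{0,\bullet}_{\preccurlyeq G}$, $Z^{\bullet,0}_{\preccurlyeq G}$, $Z^{\bullet,1}_{\preccurlyeq G}$. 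Once this compatibility is invoked, $\tilde{\tilde{Z}}^{\dagger}$ can be chosen as a manifold-with-corners chain interpolating between the three faces, and the remainder of the argument is identical to Lemma \ref{nice-extension-isotopy}.
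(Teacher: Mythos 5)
Your proposal is correct and is essentially the argument the paper intends: the paper gives no written proof of Lemma \ref{extension-isotopies-isotopies} at all, only the remark that it is the parametric version of Lemma \ref{extension-isotopies-inductive}, and your two-parameter rerun of the induction of Lemma \ref{nice-extension-isotopy} (forgetful compatibility for the base case and external edges, the closed combination supplied by the inductive identity, a collar chain $\tilde{\tilde{Z}}^{\dagger}_{G,m\sqcup\{E(G)\}}$ matching the three prescribed faces, relative $C^0$-small perturbation with transversality, the companion chain for $m\sqcup\{E(G)\}$, and the lift to multi-loop space) is exactly the adaptation the text points to. The one caveat is your claim that corner agreement is ``inherited'' by the given data: since $Z^{0,\bullet}_{\preccurlyeq G}$, $Z^{\bullet,0}_{\preccurlyeq G}$, $Z^{\bullet,1}_{\preccurlyeq G}$ are chosen independently, their restrictions at the two corners are a priori different extensions of the common $\prec G$ corner cycles, so corner compatibility must be read as an (implicit) hypothesis on the given $\preccurlyeq G$-extensions rather than a consequence of the definition of $\tilde{\tilde{\mathcal{Z}}}$ — with that reading, your argument goes through.
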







\subsubsection{From MC-cycles to nice MC-Cycles}

\begin{proposition} \label{nonice-nice}

Let $Z$ be a $MC$-cycle. There exists a nice $MC$-cycle $Z^{\diamond}$ such that 
$$ Z^{\diamond,0} =Z .$$
$Z^{\diamond}$ is canonical up to isotopy in the following sense. If $\tilde{Z}$ is an isotopy between two $MC$-cycles $Z^{\mathbf{0}}$ and $Z^{\mathbf{1}}$, and  
$Z^{\diamond,\mathbf{0}} , Z^{\diamond,\mathbf{1}}$ are constructed as in the proof, then there exists a nice $MC$-isotopy $\tilde{Z}^{\diamond}$ between 
$Z^{\diamond,\mathbf{0}} $ and $ Z^{\diamond,\mathbf{1}}$ such that 
$$\tilde{Z}^{\diamond,0} = \tilde{Z} .$$

\end{proposition}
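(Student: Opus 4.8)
The plan is to build the defining sequence $(Z^n)_n$ of $Z^{\diamond}$ by induction on the partial order $\prec$, running essentially the construction of Lemma \ref{basic-cycle} but with the initial term $Z^0 := Z$ fixed to be the given cycle instead of a cycle generated by a single multi-loop. Since niceness is an asymptotic condition as $n \to \infty$, prescribing the $n=0$ term is harmless, and the real task is to deform $Z$ through the isotopy-connected sequence $Z^1, Z^2, \dots$ so that the supports concentrate on a finite set. As noted after Lemma \ref{nice-extension-B}, all the genuine content sits on the graphs $G$ with $H(G)=\emptyset$, so I would first record, for each such $G$, a finite set $S_G$ of multi-loops onto which the compactly supported chain $\overline{Z}_G \in C_*(\mathfrak{L}_G(M))$ can be $C^0$-deformed; these $S_G$ will be the intended limit support.

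Inductively, assume $Z^n_{\prec G}$ and the connecting isotopies $\tilde{Z}^{int,n}_{\prec G}$ have been produced with matching finite limit supports. When $H(G) \neq \emptyset$ I would extend using Lemma \ref{nice-extension} for each $Z^n_{\preccurlyeq G}$ and Lemma \ref{nice-extension-isotopy} for each $\tilde{Z}^{int,n}_{\preccurlyeq G}$; at $n=0$ the forgetful compatibility already built into $Z$ forces these extensions to reproduce $Z_{\preccurlyeq G}$. Because $\overline{Z}_G$ is then assembled from strictly smaller graphs through forgetful pull-back and the operators $\delta_e$, its limit support is inherited from $\prec G$ and stays finite, so this case introduces no new choice.

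The essential case is $H(G) = \emptyset$. Following Lemma \ref{basic-cycle}, I would set
\[
\overline{Z}^n_G = \overline{Z}^{n-1}_G + \sum_{G',e'} pr_*\big(\delta_{e'}\tilde{\overline{Z}}^{int,n}_{G'}\big),
\]
which preserves $\hat{\partial} Z^n = 0$ and, by Lemma \ref{loops-contraction}, is joined to $\overline{Z}^{n-1}_G$ by an isotopy $\tilde{\overline{Z}}^{int,n}_G$ with controlled support. I would then modify $\overline{Z}^n_G$ within its isotopy class so that the loops carried by the auxiliary vertices shrink to constants and the remaining loops converge onto $S_G$ as $n \to \infty$, again realising the modification by isotopies via Lemma \ref{loops-contraction}; this yields $\lim Supp(Z^n_G) = S_G$ together with connecting isotopies of the same limit support. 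The main obstacle is precisely here: one must keep the cycle condition and the finiteness of the limit support under control \emph{simultaneously}, since the recursion couples $\overline{Z}^n_G$ to the lower-graph isotopies $\tilde{\overline{Z}}^{int,n}_{G'}$. The argument goes through only because the inductive hypothesis guarantees that those feed-in terms already have limit support inside the finite set, so that Lemma \ref{loops-contraction} can absorb them without creating new accumulation loops; verifying that the deformation onto $S_G$ can be performed by isotopies rather than mere homologies, compatibly across the whole graph poset, is the delicate point.

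For the canonicity statement I would run the identical induction one parameter higher, over the given isotopy $\tilde{Z}$, taking $\tilde{Z}^{\diamond,0} := \tilde{Z}$. For $H(G) \neq \emptyset$ the one-parameter extensions come from Lemma \ref{nice-extension-isotopy}, and for $H(G) = \emptyset$ Lemma \ref{loops-contraction} applies verbatim since it is already stated for chains on $\mathcal{L}(M) \times \R$; the isotopies between consecutive terms $\tilde{Z}^{\diamond,n}$ are supplied by the isotopy-of-isotopies extension of Lemma \ref{extension-isotopies-isotopies}, exactly as in the proof of Proposition \ref{transfer-map-proposition}. This produces a nice $MC$-isotopy $\tilde{Z}^{\diamond}$ between $Z^{\diamond,\mathbf{0}}$ and $Z^{\diamond,\mathbf{1}}$ restricting to $\tilde{Z}$ at level $0$, which is the asserted canonicity.
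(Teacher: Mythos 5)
Your proposal is correct and follows essentially the same route as the paper's proof: fix $Z^{\diamond,0}=Z$, run the graph-poset induction of Lemma \ref{basic-cycle} using Lemma \ref{nice-extension} and Lemma \ref{nice-extension-isotopy} when $H(G)\neq\emptyset$, use the recursion $\overline{Z}^n_G=\overline{Z}^{n-1}_G+\sum_{G',e'}pr_*(\delta_{e'}\tilde{\overline{Z}}^{int,n}_{G'})$ when $H(G)=\emptyset$, and obtain canonicity by rerunning the induction one parameter higher with $\tilde{Z}^{\diamond,0}=\tilde{Z}$ via the one-parameter lemmas. Your explicit invocation of Lemma \ref{loops-contraction} and of a prescribed target support $S_G$ only makes explicit what the paper leaves implicit in its support-containment estimate.
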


\begin{proof}

To construct the nice $MC$-cycle $(Z^n)_n$ we proceed with the same inductive argument of the proof of Lemma \ref{basic-cycle}. In this case we set $Z^{\diamond,0}=Z$.

We proceed by induction on the graphs. Assume that we have constructed $\overline{Z}_{\prec G}^n$  and  $\tilde{ \overline{Z}}^{int, n}_{\prec G}$ 
with  
$$\lim Supp (\overline{Z}_{\prec G})= \lim Supp ( \tilde{ \overline{Z}}^{int, n}_{\prec G})= \{ \mathbf{w} \} .$$

If $H(G) \neq \emptyset$ use Lemma  \ref{nice-extension} to obtain  $\overline{Z}_{\preccurlyeq G}^n$ for each $n>0$. Apply Lemma  \ref{nice-extension-isotopy} to obtain $\tilde{ \overline{Z}}^{int, n}_{\preccurlyeq G}$ isotopy between $\overline{Z}_{\preccurlyeq G}^n$ and $\overline{Z}_{\preccurlyeq G}^{n-1}$ . By construction we have
$$  \lim Supp (\overline{Z}_{\preccurlyeq G})= \lim Supp (\overline{Z}_{\prec G})=  \lim Supp (\tilde{ \overline{Z}}^{int})= \lim Supp (\tilde{ \overline{Z}}^{int}_{\prec G}) .$$

Assume now $H(G) = \emptyset$. By induction on $n$ set 
 $$ \overline{Z}^{n}_G=  \overline{Z}^{n-1}_G + \sum_{G',e'} pr_*(\delta_{ e'} \tilde{ \overline{Z}}^{int, n}_{G'} ) .$$
 There exists $\tilde{ \overline{Z}}^{int, n}_{G} $ isotopy between $ \overline{Z}^{n}_G$ and  $  \overline{Z}^{n-1}_G$ with
$$\partial \tilde{ \overline{Z}}^{int, n}_{G} +  \sum_{G',e'} \delta_{ e'} \tilde{ \overline{Z}}^{int, n}_{G'} =0 .$$

 We have
 $$lim-supp (\overline{Z}^n_G) \subset lim-supp(Z_{\prec G}^n)  \sqcup \bigsqcup_{G',e'} Supp (\delta_{e'}  \overline{Z}^{int,1}_{G'} ) .$$

 Now assume that we have an isotopy $\tilde{Z}$ between $Z^{ \mathbf{0}}$ and $ Z^{ \mathbf{1}}$. Let $Z^{\diamond, \mathbf{0}}, Z^{\diamond, \mathbf{1}}$ constructed as above.  
To construct $MC$-isotopy $\tilde{Z}^{\diamond}$ between 
$Z^{\diamond, \mathbf{0}} $ and $ Z^{\diamond, \mathbf{1}}$  we proceed with the same inductive argument setting $\tilde{Z}^0 = \tilde{Z}$ and applying the one parameter version of the Lemmas used above.

\end{proof}

\subsection{Forgetting the degree: Point Splitting Perturbative Chern Simons}

We now consider  a different  version of $MC$-homology complex associated to a set of decorated graphs 
$\mathfrak{G}^{\dagger}$ 
 obtained forgetting partially the decoration data of $\mathfrak{G}$.

An element of $G^{\dagger} \in \mathfrak{G}^{\dagger} $ is defined by the data 
\begin{equation} \label{array-point-splitting}
    ( \kappa^* , d^* , V^*,D^*, Comp_0 , (V_c,D_c)_c , (g_c)_c , (H_v )_v  , E )
\end{equation} 
where 
\begin{itemize}
\item $\kappa^*  \in \Z_{\geq 0}$;  
\item $d^* \in  \Z_{\geq 0}$;
\item $ V^*, D^*$ are  finite sets: 
\item for each $c \in Comp_0 $, $V_c,D_c$ are  finite sets and $g_c \in \Z_{\geq 0}$;
Set $V= V^* \sqcup \sqcup_c V_c $; 
\item for each $v \in V$,  $H_v$ is a finite cyclic order set. Set $H= \sqcup_v H_v$;
\item $E$ is a partition of $H$ in subset of cardinality two or one.
\end{itemize}
For each $c \in Comp_0$, set 
$$\chi_c= 2- 2g_c - |V_c|  - |D_c|.$$
We assume the stability condition
$$ \chi_c - \frac{1}{2} |H_c| <0 \quad \forall c \in Comp_0.$$



Set 
$$ 
\kappa(G^{\dagger})= \kappa^* - \sum_{c \in Comp_0} \chi_c  + \frac{1}{2} |H| \quad
d(G^{\dagger})= d^* + \sum_c |D_c|   .$$

The  $MC$-chain complex $\mathcal{C}^{\dagger}$ is defined using the decorated graphs $ \mathfrak{G}^{\dagger}$ instead of $ \mathfrak{G}$.
An element of  $\mathcal{C}^{\dagger}$ consists on a collection of chains
$$( C_{G^{\dagger},m}^{\dagger} )_{(G^{\dagger},m) \in \mathfrak{G}_*^{\dagger}}. $$
\begin{itemize}
\item  
$ C_{G^{\dagger},m}^{\dagger} $ is transversal  to   $ \sqcap_{e \in E'} \pi_e^{-1}(Diag)$ for each subset $E' \subset E^{in }(G^{\dagger} )  \setminus E^l$;
\item  the forgetful compatibility holds;
\item  for each $kappa \in \Z$, the set
$$ \{ (G^{\dagger},m) \in \mathfrak{G}_*^{\dagger}(\kappa) |  C_{G^{\dagger},m}^{\dagger} \neq 0  \}  $$
is finite.
\end{itemize}
The operator $\hat{\partial}$ is extended to $\mathcal{C}^{\dagger}$ straightforwardly. 


The semi-group of not-negative powers $(g_s^{k_0} a^{d_0})_{k_0,d_0 \in \Z_{\geq 0} }$  acts  on $\mathfrak{G}^{\dagger}$:
$$ \kappa^* (g_s^{k_0} a^{d_0} G) = \kappa^* (G) + k_0,  d^* (g_s^{k_0} a^{d_0} G) = d^*(G) + d_0 $$
and the other data remain the same. 
From this action $\mathcal{C}^{\dagger}$ acquire a module structure structure on the ring of formal power series $\Q [[g_s, a]]$.

We quotient by the following relations: 
 \begin{multline} \label{equivalence-zero-component}
       ( \kappa^* , d^* , V^*,D^*, Comp_0 , (V_c,D_c)_c , (g_c)_c , (H_v )_v  , E ) \sim  \\   
( \kappa^* - \chi_{c'} , d^* + |D_{c'}| , V^*,D^*, Comp_0 \setminus c' , (V_c,D_c)_c , (g_c)_c , (H_v )_v  , E ).
 \end{multline}
if $c' \in Comp_0$ is a component with $V_{c'} = \emptyset$, and
\begin{multline}
    ( \kappa^* , d^* , V^*, D^*, Comp_0 , (V_c,D_c)_c , (g_c)_c , (H_v )_v  , E ) \sim  \\
( \kappa^* +  |D^*| , d^* + |D^*|, V^*, \emptyset, Comp_0 , (V_c,D_c)_c , (g_c)_c , (H_v )_v  , E ).
\end{multline}




The chain complexes $ \mathcal{C}$ and $ \mathcal{C}^{\dagger}$ are related as follows. We introduce a shift version of the chain complex $ \mathcal{C}^{\dagger}$. Let $\mathfrak{G}^{\dagger}[ N^{eu}]$ be the set of decorated graphs defined using the same array (\ref{array-point-splitting})  except that  we require  $\kappa^* \geq -N^{eu}$ instead of $\kappa^* \geq 0$. 

Given $\beta \in \Gamma$ we have  map of sets  
\begin{equation} \label{graphs-MC-PS}
\mathfrak{G}(\beta, \kappa) \rightarrow \mathfrak{G}^{\dagger}[ N_{\beta}^{eu}  ]  ,
\end{equation}
for $N_{\beta}^{eu}$ integer big enough depending on $\beta$. 
To the decorated graph 
$$G= (Comp, (g_c, \beta_c, D_c , V_c)_c, (H_v)_v ,E ) \in \mathfrak{G} (\beta)$$ 
corresponds  the decorated graph 
$$G^{\dagger}=( \kappa^* , d^* , V^*,D^*, Comp_0 , (V_c,D_c)_c , (g_c)_c , (H_v )_v  , E ) \in \mathfrak{G}^{\dagger}[N_{\beta}] $$
defined by
$$Comp_0= \{c \in Comp| \beta_c  =0  \},$$
$$\kappa^* = - \sum_{c \in Comp_{\neq 0}} \chi_v ,d^*=0,   V^*= \sqcup_c V_c, D^* = \sqcup_c D_c,$$ 
 $(V_c,D_c)_c , (g_c)_c$  for each $c \in Comp_0$,  $ , (H_v )_v $  for each $v \in V$, and $E$ are the same. Here we used the notation $Comp_{\neq 0} = Comp \setminus Comp_0 $.


The support property  (\ref{support}) implies that there exists $N_{\beta } \in \Z_{> 0}$ such that $\kappa^* + N_{\beta} \geq 0$. Hence, from (\ref{graphs-MC-PS})  we obtain a map of $MC$-chain complexes
$$\mathcal{C}_{\beta} \rightarrow \left( \frac{1}{g_s^{N_{\beta}}} \Q[[g_s,a]] \right) \otimes_{\Q[[g_s,a]]} \mathcal{C}^{\dagger}.$$

We equip  the graphs $\mathfrak{G}^{\dagger}$ with a partial order:
we say $G' \prec G$ if one of the following holds
\begin{itemize}
\item $\kappa(G') < \kappa(G)$
\item $\kappa(G') = \kappa(G)$ and $d(G') < d(G)$
\item $\kappa(G') = \kappa(G), d(G') = d(G)$ and $\delta_{E'} G' \cong G$ for some $E' \subset E^{in}(G')$
\end{itemize}


\begin{proposition} \label{one-dimension}
If $\mathbf{w}= ( w_i )_i$ an embedded link, $MCH(M, \mathbf{w})^{\dagger}$ is a rank one free module over $\Q[[g_s, a]]$.
\end{proposition}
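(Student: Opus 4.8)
The plan is to prove the statement in three stages: produce a generator, show it generates the module, and show it is free. Throughout I would work inside the complex $\mathcal{C}^{\dagger}$ and use the nice $MC$-cycle formalism, since $MCH(M,\mathbf{w})^{\dagger}$ is computed by nice cycles with limit support on $\mathbf{w}$.

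First I would construct the candidate generator. Viewing the \emph{embedded} link $\mathbf{w}=(w_i)_{i\in I}$ as a multi-loop, I would run the construction of Lemma \ref{basic-cycle} inside $\mathcal{C}^{\dagger}$, taking $G^{\heartsuit}$ to be the graph with $V(G^{\heartsuit})=I$, $H(G^{\heartsuit})=\emptyset$ and trivial decoration $\kappa^{*}=d^{*}=0$. This produces a nice $MC$-cycle $Z$ with $\overline{Z}_{G^{\heartsuit}}=\mathbf{w}$, with $Z_{G,m}\neq 0\Rightarrow G^{\heartsuit}\prec G$, and with $\lim Supp(Z)=\{\mathbf{w}\}$. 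Its class $[Z]\in MCH(M,\mathbf{w})^{\dagger}$ is the candidate free generator, and the module structure then produces the elements $g_{s}^{k}a^{d}[Z]$.

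Second I would reduce the whole computation to the graphs $G$ with $H(G)=\emptyset$. By Lemma \ref{nice-extension} and its bounding companion Lemma \ref{nice-extension-B}, over any graph with $H(G)\neq\emptyset$ a cycle always extends and a bounding chain always exists, so such graphs carry no obstruction; as recorded around (\ref{obstructions}), the only obstruction to writing a given cycle in the form $g_{s}^{k}a^{d}[Z]+\hat{\partial}(\cdots)$ is concentrated in the groups $H_{0}(\mathfrak{L}_{G}(M))$ over the graphs with $H(G)=\emptyset$, restricted to the limit support $\mathbf{w}$, with differential induced by $\delta$ (resolving the degenerate vertices $D$ and contracting internal edges) together with the equivalences (\ref{equivalence-zero-component}). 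Running the induction over the partial order $\prec$ then reduces generation and freeness to a homology computation of this combinatorial complex.

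Third comes the geometric heart, where embeddedness is essential. Because $\mathbf{w}$ is embedded, any loop whose support lies near $\mathbf{w}$ is forced into exactly one of the distinct free-homotopy classes $[w_i]$ or the constant class, and these families are rigid, so each $H_{0}(\mathfrak{L}_{G}(M))$ restricted to $\mathbf{w}$ is a free $\Q$-module on an explicit finite set of vertex-labelings. I would then identify the homology of the resulting complex: under (\ref{equivalence-zero-component}) every component with $V_{c}=\emptyset$ and every block of degenerate vertices is traded for a monomial $g_{s}^{k}a^{d}$, so the closed decorations reproduce precisely $\Q[[g_{s},a]]$, while the boundary loops of the link survive as the single class $[Z]$; this yields $MCH(M,\mathbf{w})^{\dagger}=\Q[[g_{s},a]]\cdot[Z]$. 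Freeness follows from a leading-term argument: if $f(g_{s},a)[Z]=0$ with $f\neq 0$, its lowest nonzero monomial $c_{k,d}g_{s}^{k}a^{d}$ would force the evaluation on $G^{\heartsuit}$ decorated by $(\kappa^{*},d^{*})=(k,d)$ to bound, but the rigid embedded link represents a nonzero class in $H_{0}(\mathfrak{L}_{G^{\heartsuit}}(M))$ and cannot bound, a contradiction.

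The step I expect to be the main obstacle is the third one: matching the combinatorial homology of the closed-component and degenerate-vertex decorations \emph{exactly} with $\Q[[g_{s},a]]$ and ruling out any further generators. This is precisely the point at which embeddedness, rather than mere immersion, is indispensable, since self-intersections of $\mathbf{w}$ would permit loop recombination at the crossings and thereby enlarge $H_{0}(\mathfrak{L}_{G}(M))$ restricted to $\mathbf{w}$, destroying the rank-one conclusion.
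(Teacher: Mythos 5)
Your proposal is correct and follows essentially the same route as the paper: the same generator $Z^{\heartsuit}$ with $\overline{Z}^{\heartsuit}_{G^{\heartsuit}}=\mathbf{w}$ (Lemma \ref{basic-cycle}), the same reduction via Lemmas \ref{nice-extension} and \ref{nice-extension-B} concentrating all obstructions in $H_0(\mathfrak{L}_G(M))$ for graphs with $H(G)=\emptyset$, the same inductive absorption of closed components into monomials $g_s^{\kappa}a^{d}$ via (\ref{equivalence-zero-component}) for surjectivity, and the same leading-term argument for injectivity resting on the fact that near an \emph{embedded} link one-cycles bound while the zero-cycle $\mathbf{w}$ does not. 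The only compression relative to the paper is that your freeness step leaves implicit the inductive construction of the two-chain $T^n$ needed to dispose of the $\delta_{e'}B_{G'}$ contributions before concluding that $\mathbf{w}$ would bound in $H_0(\mathfrak{L}_I(M))$, but this is exactly the obstruction framework you set up in your second stage.
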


\begin{proof}



 Let $G^{\heartsuit}$ the only graph with $V^*(G^{\heartsuit})=I, H(G^{\heartsuit})= \emptyset, $
....

Fix a nice $MC$-cycle $Z^{\heartsuit} \in \mathcal{Z}_{\mathbf{w}}^{\dagger}$ with  the following property
\begin{equation} \label{generator-prop2}
\overline{Z}^{\heartsuit}_{G^{\heartsuit}}= \mathbf{w} .
\end{equation}

We claim that the map
$$    \Q[[g_s, a]] \rightarrow  MCH(M, \mathbf{w})^{\dagger}$$
\begin{equation} \label{iso-map}
  q(g_s, a) \mapsto  q(g_s, a)  Z^{\heartsuit} 
\end{equation}
is an isomorphism.

Let us first  prove that  (\ref{iso-map}) is injective. Assume that 
$$ \hat{\partial}  B =  q(g_s,a) Z^{\heartsuit}$$ 
for some $MC$-one chain $B$. 

We have
 $$ \hat{\partial}  B_{\prec g_s^{\kappa_0} a^{d_0} G^{\heartsuit}}^n = 0  $$
if  $(\kappa_0, d_0)$ is the leading term appearing in the formal power series  $q$.

Using an inductive argument on graphs we can show that, for $n$ big enough, there exists a $MC$ two-chain $T^n$ such that 
$$ \hat{\partial}  T_{\prec g_s^{\kappa_0} a^{d_0} G^{\heartsuit}}^n =  B_{\prec g_s^{\kappa_0} a^{d_0} G^{\heartsuit}}^n  ,$$
$$lim-supp(T_{\prec g_s^{\kappa_0} a^{d_0} G^{\heartsuit}}^n) = \{  \mathbf{w}  \}.$$
Observe that in this  case there are no obstructions to the existence of $T$, since, for $\epsilon$ small enough, any closed one cycle on $ \mathfrak{L}_I(M)$ whose support  is $\epsilon$-close to $\mathbf{w}$ is the boundary of a two chain. 

It follows that the zero chain on $ \mathfrak{L}_I(M)$ given by $\mathbf{w}$ is the boundary of one chain:
$$ \mathbf{w} = Z_{ g_s^{\kappa_0} a^{d_0} G^{\heartsuit}}^n  =  \partial B_{ g_s^{\kappa_0} a^{d_0} G^{\heartsuit}}^n + \sum_{G' ,e'} \delta_{e'} B_{ G'}^n = \partial B_{ g_s^{\kappa_0} a^{d_0} G^{\heartsuit}}^n + \partial( \sum_{G' ,e'}   \delta_{e'} T_{G'}^n).$$
This is clearly a contraction. 






Now we prove that the map (\ref{iso-map}) is surjective. We need to show that for each $Z \in \mathcal{Z}_{\mathbf{w}}$  there exists a formal power series  $q(g_s,a) \in \Q[[g_s ,a]]$ such that $[Z] = q(g_s, a) [ Z^{\heartsuit}]$ in $MCH(M, \mathbf{w})$. We shall construct the formal power series $q$ using an inductive argument, imposing in each step the vanishing of the obstructions (\ref{obstructions})  for the $MC$-cycle   $Z  - q(g_s, a) Z^{\heartsuit}$.

Assume that there exist $q$  and $B_{\prec  G}^n  \in \mathcal{C}_{\prec G} $ such that  $$ \hat{\partial}  B_{\prec G}^n = Z_{\prec  G}^n - (q(g_s,a) Z^{\heartsuit,n})_{\prec G} $$  $$ \lim Supp ( B_{\prec G})  = \{  \mathbf{w}  \} .$$

 If $H(G) \neq \emptyset$, as in Lemma  \ref{nice-extension-B}, there exists $B_{\preccurlyeq G}  \in \mathcal{C}_{\preccurlyeq  G} $ extending $B_{\prec G}$  such that    $$ \hat{\partial}  B_{\preccurlyeq G} = Z_{\preccurlyeq  G} - q(g_s,a) Z^{\heartsuit}_{\preccurlyeq G} ,$$ $$ lim-supp ( B_{\preccurlyeq G})  = \{  \mathbf{w}  \} .$$

 If $H(G) = \emptyset$ , write $\overline{Z}_{G} = r (w_v)_{v \in V(G)}$. If $V(G) \neq I$, let $v_0 \notin I$,  use $\overline{B}_{G}$ to shrink $w_{v_0}$ and apply  relation (\ref{equivalence-zero-component}) to obtain $$\overline{Z}_{G} + \partial \overline{B}_{G}  \rightsquigarrow 0$$ $$  \overline{Z}_{a G'} \rightsquigarrow  \overline{Z}_{a G'} + r (w_v)_{v \in V(G')}$$ where $G'$ is the graph obtained removing the vertex $v_0$ from $G$.

  If $H(G) = \emptyset$ and $V(G) = I$, there exists $r \in \Q$ and $ \overline{B}_{G}$ such that     $$(\overline{Z} -   q(g_s,a) Z^{\heartsuit}  )  _{G} + \partial  \overline{B}_{G}  +  \sum_{G',e'} \delta_{e'} \overline{B}_{G'} = r \mathbf{w} ,$$   $$lim-supp(\overline{B}_{G})=   \mathbf{w}.$$  Making the replacement  $$ q  \rightsquigarrow q + r a^{d(G)}  g_s^{\kappa(G)} ,$$ we hav $$(\overline{Z} -   q(g_s,a) Z^{\heartsuit}  )  _{G} + \partial  \overline{B}_{G}  +  \sum_{G',e'} \delta_{e'} \overline{B}_{G'} = 0 .$$


Finally, using a similar argument used in the proof of  the injectivity above, it follows that  $q$ does not depend on $n$, for $n$ big enough.






\end{proof}



The last lemma can be extended easily to links with a finite number of crossings. 

\begin{lemma} \label{crossing-dimension}
Let $\mathfrak{w}$ be a one dimensional current represented by a one dimensional manifold with $n$ crossing singularity. $MCH(M, \mathfrak{w})$  is a rank $2^n$ free module over $\Q[[g_s, a]]$.
\end{lemma}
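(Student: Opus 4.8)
The plan is to reduce the statement to Proposition \ref{one-dimension} by passing to the $2^n$ resolutions of $\mathfrak{w}$. A transverse crossing of $\mathfrak{w}$ admits exactly two embedded local resolutions (joining the four local rays in the two non-crossing ways), so a choice of resolution at each of the $n$ crossings produces $2^n$ embedded links $\{\mathbf{w}^{(s)}\}_{s\in\{0,1\}^n}$, each representing the current $\mathfrak{w}$. For every $s$ I would fix a one-parameter family $\tilde{\mathbf{w}}^{(s)}$ collapsing $\mathbf{w}^{(s)}$ onto the singular configuration and apply Proposition \ref{transfer-map-proposition} to obtain a class $\zeta_s := transfer_{\tilde{\mathbf{w}}^{(s)}}(Z^{\heartsuit,s}) \in MCH(M,\mathfrak{w})$, where $Z^{\heartsuit,s}$ is the rank-one generator of $MCH(M,\mathbf{w}^{(s)})$ furnished by Proposition \ref{one-dimension} and Lemma \ref{basic-cycle}; the independence of $\zeta_s$ on the chosen family is guaranteed by the identity (\ref{tranfer-indipendent}). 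The claim is then that the $\Q[[g_s,a]]$-linear map $(q_s)_s \mapsto \sum_s q_s \zeta_s$ is an isomorphism onto $MCH(M,\mathfrak{w})$.

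For surjectivity I would run verbatim the inductive argument of Proposition \ref{one-dimension}. The geometry of the support enters only at graphs $G$ with $H(G)=\emptyset$, where the obstruction to extending a primitive is the class (\ref{obstructions}) in $H_0(\mathfrak{L}_G(M))$ of a zero-chain $\epsilon$-close to $\mathfrak{w}$. For an embedded link this localized group is rank one; for $\mathfrak{w}$ with $n$ crossings I would instead show that any zero-chain representing $\mathfrak{w}$ and $\epsilon$-close to it is homologous, through chains of the same limit support, to a combination $\sum_s r_s\,\mathbf{w}^{(s)}$. Matching leading terms then fixes, at each inductive step, the coefficient to be absorbed into the corresponding $q_s$, exactly as the single coefficient $r$ was fixed in the embedded case, and the bookkeeping of Lemma \ref{nice-extension-B} carries over unchanged.

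The heart of the argument, and the only genuinely new point, is injectivity. Suppose $\sum_s q_s \zeta_s = \hat{\partial} B$ for a nice $MC$-one chain $B$. Reading off the leading graph $G^{\heartsuit}$ forces the zero-chain $\sum_s (\text{leading term of } q_s)\,\mathbf{w}^{(s)}$ to bound within the loop space localized near $\mathfrak{w}$, through chains representing the current $\mathfrak{w}$. I must show this is impossible unless every $q_s$ vanishes, i.e. that the $2^n$ resolutions are homologically independent in the localized $H_0$. Near a single crossing the two embedded resolutions are separated by the wall of self-touching configurations, and passing from one to the other is a crossing change that cannot be realized by a current-preserving homology staying $\epsilon$-close to $\mathfrak{w}$; the local invariant distinguishing the resolutions—the order of the two strands in the direction normal to the crossing—is therefore locally constant on the admissible locus. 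Taking the product of these local invariants over the $n$ crossings yields a well-defined $\{0,1\}^n$-valued invariant separating the $\mathbf{w}^{(s)}$, which shows the localized homology is free of rank $2^n$ and gives injectivity. Establishing that a crossing change is unavailable within current-preserving, near-$\mathfrak{w}$ homologies is the step I expect to require the most care; everything else is the inductive machinery already assembled in Lemmas \ref{basic-cycle} and \ref{nice-extension-B}.
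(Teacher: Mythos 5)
Your construction of the candidate generators and your surjectivity sketch are essentially sound, and in fact close to the paper, which proceeds even more directly: it takes the $2^n$ multi-loops $\mathbf{w}_j$ that represent $\mathfrak{w}$ \emph{exactly} (one for each orientation-compatible reconnection pattern at the $n$ crossings) and applies to each of them the construction of Lemma \ref{basic-cycle} with property (\ref{generator-prop2}), so no transfer map is needed; your detour through embedded push-offs and Proposition \ref{transfer-map-proposition} is extra machinery but not a problem. The genuine gap is in your injectivity step, which you yourself identify as the heart of the argument. The invariant you propose --- ``the order of the two strands in the direction normal to the crossing'', i.e.\ the over/under datum --- is the wrong one, and the claim that a crossing change ``cannot be realized by a current-preserving homology staying $\epsilon$-close to $\mathfrak{w}$'' is false in this framework. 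The paper stresses exactly this point in its proof: one works with isotopies of multi-loops, so overcrossing and undercrossing are \emph{equivalent}. The admissible chains are not confined to embedded configurations: the limit support consists of the singular multi-loops $\mathbf{w}_j$ themselves, which lie \emph{on} your ``wall of self-touching configurations'', so your invariant is undefined precisely where the homology concentrates, and the over- and under-push-offs of the same reconnection pattern are joined by an arbitrarily small admissible homotopy passing through the wall. Indeed, if your invariant were locally constant on the admissible locus, the over and under resolutions would give independent classes and the rank would exceed $2^n$, contradicting the very statement you are proving.

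What actually separates the $2^n$ generators is the reconnection (connectivity) pattern, not the strand order: two multi-loops representing $\mathfrak{w}$ with different connectivity at some crossing are at a definite positive $C^0$-distance $d_0$ from each other in multi-loop space (a loop that enters the crossing along one strand and exits along the other cannot be $C^0$-close to a loop that does not), so for $\epsilon < d_0/2$ the $\epsilon$-neighborhoods of the $\mathbf{w}_j$ are pairwise disjoint, the localized $H_0(\mathfrak{L}_G(M))$ appearing in the obstruction (\ref{obstructions}) is free of rank $2^n$, and the leading-term contradiction from the injectivity part of Proposition \ref{one-dimension} applies verbatim, neighborhood by neighborhood. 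No wall-crossing analysis is required --- and none would succeed.
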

\begin{proof}
Let $\{ \mathbf{w}_j \}_{j \in J}$ be the set of multi-loops that  represent  $\mathfrak{w}$ as a current. Since, up small isotopy, each crossing can be smoothed in two different ways, $J$ has cardinality $2^n$. We stress that here we are interested to isotopies of multi-loops, in particular the over crossing and undercrossing are equivalent. 


For each $j \in J$, pick a $MC$-cycle $Z^{ \heartsuit, j}$  with the property  (\ref{generator-prop2}) using the multi-loop $\mathbf{w}_j $. 
The argument of Lemma  \ref{one-dimension} shows that $\{ Z^{ \heartsuit ,j} \}_{j \in J}$ is a basis of the module $MCH(M, \mathfrak{w})$.



\end{proof}

\subsubsection{Coherent Cycles}

Proposition (\ref{one-dimension}) claims that any $MC$-cycle satisfying condition (\ref{generator-prop2}) is a generator of 
$MCH(M, \mathbf{w})^{\dagger}$ but it does not provide a canonical generator of $MCH(M, \mathbf{w})^{\dagger}$.  It is possible to pick a particular generator after the choice of some topological data, namely a frame compatible in a suitable sense with $\mathbf{w}$ and $Z_{Ann0}$.
We call these $MC$-cycles coherent cycles. These $MC$-cycles were introduced in the abelian case in \cite{Boundary-States} .  Their construction in the not abelian case is more complicated and it is made in \cite{PSPCS}.


We say that a frame $\mathbf{fr} \in \mathfrak{Fr}(M)$ is compatible with $\mathbf{w}$
 if there exists 
$\mathbf{a}=(a_1,a_2,a_3) \in \R^3$    such that $T_z \mathbf{w} = \langle a_1 \mathbf{fr}_1(z) +  a_1 \mathbf{fr}_2(z)  +  a_3 \mathbf{fr}_3(z) \rangle $ for each $z \in \mathbf{w}$.
 Denote by $ \mathfrak{Fr}(M, \mathbf{w}  )$ the set of orthogonal frames  of $M$ compatible with $\mathbf{w}$.

Recall the relation between $Z_{Ann0}$ and the space of Euler structures given in (\ref{isotopy-classes-Ann}). We say that an Euler Structure $U^{\blacktriangle}$ is compatible with the frame  $\mathbf{fr}$ if it is constant in the trivialization defined by $\mathbf{fr}$.


\begin{proposition} (\cite{PSPCS})
    To a triple $(\mathbf{w},\mathbf{fr}, U^{\blacktriangle}  )$ compatible in the sense above it is associated a nice $MC$-cycle 
    $Z_{(\mathbf{w},\mathbf{fr}, U^{\blacktriangle}  )}  $, canonically defined in $MCH(M, \mathbf{w} |Z_{Ann0})$
$$ (\mathbf{w},\mathbf{fr} ) \leadsto  Z_{(\mathbf{w},\mathbf{fr}, U^{\blacktriangle}  )}.$$
\end{proposition}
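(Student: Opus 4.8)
The plan is to run the same inductive construction over the partial order on decorated graphs that underlies Lemma \ref{basic-cycle} and the surjectivity argument of Proposition \ref{one-dimension}, but to use the frame $\mathbf{fr}$ and the Euler structure $U^{\blacktriangle}$ to remove, at every step, the freedom that made the generator of $MCH(M,\mathbf{w})^{\dagger}$ non-canonical. Recall that in the proof of Proposition \ref{one-dimension} the only genuine choice occurs on graphs $G$ with $H(G)=\emptyset$: there the obstruction (\ref{obstructions}) lives in $H_0(\mathfrak{L}_G(M))$ and is absorbed at the price of a rational constant $r$, which enters the power series $q$ through the replacement $q \mapsto q + r\,a^{d(G)}g_s^{\kappa(G)}$. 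The frame is precisely the datum needed to fix all these constants simultaneously, by prescribing how colliding marked points are separated, i.e. the point-splitting mechanism.

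First I would fix the truncated data. Using the identification (\ref{isotopy-classes-Ann}), I choose $Z_{Ann0}$ to be the $\mathcal{Z}_{Ann0}$-cycle representing $[U^{\blacktriangle}]\in\mathfrak{Eul}(M)^{\blacktriangle}$; the hypothesis that $U^{\blacktriangle}$ is compatible with $\mathbf{fr}$ means it is constant in the trivialization determined by $\mathbf{fr}$, so one may take a representative that is literally constant in the frame, which renders $Z_{Ann0}$ canonical up to isotopy. Then fix $Z^{\clubsuit}$ as in Subsection \ref{forget-MC-section}, which is unique up to isotopy. Finally set the base chain $\overline{Z}_{G^{\heartsuit}}=\mathbf{w}$, so that condition (\ref{generator-prop2}) holds; by Proposition \ref{one-dimension} whatever cycle we complete this to is a generator, and the point of the argument is to make the completion canonical.

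Next I would carry out the induction on graphs. For $G$ with $H(G)\neq\emptyset$ the chains $Z_{\preccurlyeq G}$ are produced by Lemma \ref{nice-extension} and involve no new choice beyond a $C^0$-small transversal perturbation, which the frame supplies canonically by pushing the evaluation off the diagonals $\pi_e^{-1}(Diag)$ along the directions $\mathbf{fr}_1,\mathbf{fr}_2,\mathbf{fr}_3$; compatibility of $\mathbf{fr}$ with $\mathbf{w}$, namely that $T_z\mathbf{w}$ lies in the span of the frame, guarantees these pushoffs are consistent with the forgetful maps and with the already-fixed $Z^{\clubsuit}$. On a graph $G$ with $H(G)=\emptyset$ I would resolve the collision locus using the frame: the normal framing of $\mathbf{w}$ determines a preferred separation of each colliding pair of points along the diagonal, hence a preferred chain filling the obstruction (\ref{obstructions}) together with a specific value of $r$. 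Recording the corresponding monomial and continuing pins down the power series $q$ of Proposition \ref{one-dimension} term by term, while the Euler-structure datum fixes the residual ambiguity coming from the $Ann0$-components through relation (\ref{equivalence-zero-component}).

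Finally, for canonicity up to isotopy I would apply the one-parameter versions of the extension lemmas. Given an isotopy of compatible triples, one runs the same induction on $\tilde{\mathcal{Z}}$ using Lemma \ref{nice-extension-isotopy} for graphs with half-edges and the frame-determined filling for graphs with $H(G)=\emptyset$, yielding an isotopy between the two coherent cycles, so that $Z_{(\mathbf{w},\mathbf{fr},U^{\blacktriangle})}$ is well defined in $MCH(M,\mathbf{w}\,|\,Z_{Ann0})$. The hard part will be the coherent treatment of the diagonal singularity: one must check that the frame-prescribed separation of colliding points defines an actual chain near $\bigcap_{e}\pi_e^{-1}(Diag)$ that is simultaneously compatible with every operator $\delta_e$, with the forgetful compatibility of Subsection \ref{forget-MC-section}, and with the fixed $Z_{Ann0}$ and $Z^{\clubsuit}$, i.e. that the local point-splitting rule globalizes without anomaly. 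This is the heart of \cite{PSPCS} and is considerably more delicate in the non-abelian setting, where the half-edges at each vertex carry a cyclic order, than in the abelian case of \cite{Boundary-States}.
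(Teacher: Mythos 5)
The first thing to note is that the paper contains no proof of this proposition at all: it is cited from \cite{PSPCS}, which is listed as ``in preparation,'' and the surrounding text explicitly defers the non-abelian construction of coherent cycles to that reference. So your proposal cannot be matched against an argument in the present paper; it has to stand on its own. As a standalone argument it has the right skeleton --- induction over the partial order on decorated graphs, Lemma \ref{nice-extension} and Lemma \ref{nice-extension-isotopy} handling all graphs with $H(G)\neq\emptyset$, the genuine choices concentrated on graphs with $H(G)=\emptyset$ where the obstruction (\ref{obstructions}) lives in $H_0(\mathfrak{L}_G(M))$, and the frame invoked to eliminate exactly those choices. That diagnosis agrees with what the paper's own machinery (Proposition \ref{one-dimension} and its surjectivity argument) makes visible.

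However, there is a genuine gap, and it sits precisely where the theorem's content lies. You assert that the frame ``supplies canonically'' the $C^0$-small transversal perturbations by pushing off the diagonals along $\mathbf{fr}_1,\mathbf{fr}_2,\mathbf{fr}_3$, and that the normal framing of $\mathbf{w}$ ``determines a preferred separation of each colliding pair of points,'' hence a preferred filling chain and a preferred constant $r$ at each graph with $H(G)=\emptyset$. Neither claim is constructed or verified: a trivialization of $TM$ does not by itself produce a chain in $M^{H(G)}$ (or in $\mathfrak{L}_G(M)$) filling a given null-homologous zero-cycle, let alone one that is simultaneously equivariant under $\mathrm{Aut}(G,m)$, compatible with every operator $\delta_e$ (whose definition involves intersecting with diagonals and pushing forward), compatible with the cyclic orders on the $H_v$, consistent with the forgetful compatibility of Subsection \ref{forget-MC-section}, and matching the fixed $Z_{Ann0}$ and $Z^{\clubsuit}$. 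You acknowledge this yourself in the final paragraph (``the hard part will be the coherent treatment of the diagonal singularity \ldots this is the heart of \cite{PSPCS}''), which is an honest admission that the core of the proof is missing. The same gap propagates to canonicity: the one-parameter lemmas give existence of extending isotopies only once the frame-determined fillings are known to be isotopy-invariant, which again requires the unconstructed point-splitting rule. In short, your proposal is a plausible reduction of the proposition to a local-to-global anomaly-vanishing statement about frame-prescribed point splitting, but that statement is the proposition, and it remains unproved.
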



In the preview proposition the $MC$-cycle depends on the choice of a frame $\mathbf{fr}$ of the manifold $M$. 
We now define a slightly different version of $MC$-chain complex that allow to associate a $MC$-cycle to a compatible pair $(\mathcal{L},  U^{\blacktriangle})$ where $\mathcal{L}$ is a framed link.

Consider the set $\mathfrak{G}^{\dagger, \blacktriangledown}$ of graphs $G^{\dagger} \in \mathfrak{G}^{\dagger}$ such that exits a component $c \in Comp_0(G^{\dagger}) $  with $V_{c} = \emptyset$.
Let $\mathcal{C}^{\dagger, \blacktriangledown}$ be the sub-space of   $\mathcal{C}^{\dagger}$ whose elements have support in $\mathfrak{G}^{\dagger, \blacktriangledown}$. Observe that  $\mathcal{C}^{\dagger, \blacktriangledown}$ is invariant by forget compatibility and $\hat{\partial}$ and hence
$$  \mathcal{C}^{\ddagger} := \mathcal{C}^{\dagger}/ \mathcal{C}^{\dagger, \blacktriangledown}   $$
defines a  version of the $MC$-chain complex, which we call Normalized MC-chain complex.

Formally we are replacing
relation (\ref{equivalence-zero-component}) with 
\begin{align} \label{equivalence-zero-component-reducted}
       ( \kappa^* , d^* , V^*,D^*, Comp_0 , (V_c,D_c)_c , (g_c)_c , (H_v )_v  , E ) \sim 0.
 \end{align}

\begin{remark}
The reason of the name stem of the fact that 
after we couple normalized $MCH$ with the Chern-Simons propagator 
we obtain the normalized expectation values of Wilson loops.
\end{remark}

$MCH(M, \mathbf{w})^{\ddagger}$ has the following new property:
\begin{lemma} \label{lemma-canonical-one-dimensional}
    If $\mathbf{w}$ is the empty  link, 
\begin{equation} \label{zero-MCH}
MCH(M, \emptyset)^{\ddagger}= \Q[[g_s, a]]   
\end{equation}
canonically.
\end{lemma}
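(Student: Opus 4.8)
The plan is to adapt the proof of Proposition \ref{one-dimension} to the normalized complex $\mathcal{C}^{\ddagger}$ with $\mathbf{w}=\emptyset$, and then to extract from the normalization a \emph{canonical} generator rather than a generator defined only up to a unit of $\Q[[g_s,a]]$, as was the case after Proposition \ref{one-dimension}. First I would pin down the canonical generator. For the empty link $I=\emptyset$ the graph $G^{\heartsuit}$ of Proposition \ref{one-dimension} degenerates to the trivial graph $G_0$ with $Comp_0=\emptyset$, $V^*=\emptyset$, $H=\emptyset$ and $\kappa^*=d^*=0$. Since $V(G_0)=\emptyset$ we have $\mathfrak{L}_{G_0}(M)=\mathrm{pt}$ and $M^{H(G_0)}=\mathrm{pt}$, so the coefficient $\overline{Z}_{G_0}$ of any cycle lies canonically in $C_0(\mathrm{pt})=\Q$. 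I take $\mathbf{1}$ to be the nice $MC$-cycle produced by Lemma \ref{basic-cycle} from the empty multi-loop; by construction $\overline{Z}_{G_0}=1$, the empty multi-loop being the point of $\mathfrak{L}_{G_0}(M)$ with coefficient $1$. The shifted graphs $g_s^{\kappa}a^{d}G_0$ are likewise empty of vertices and of components, hence are not in $\mathfrak{G}^{\dagger,\blacktriangledown}$ and survive in $\mathcal{C}^{\ddagger}$; together they span a subcomplex isomorphic to $\Q[[g_s,a]]$ on which $\hat{\partial}$ acts trivially, since no edge of a nontrivial graph contracts to a vertex-free graph and $G_0$ carries no marked points.

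Next I would prove rank-one freeness by transcribing the surjectivity and injectivity arguments of Proposition \ref{one-dimension}, using Lemma \ref{nice-extension-B} at graphs with $H(G)\neq\emptyset$ and analyzing the obstruction (\ref{obstructions}) in $H_0(\mathfrak{L}_G(M))$ at graphs with $H(G)=\emptyset$. The simplification for $\mathbf{w}=\emptyset$ is that every loop in the limit support is $\epsilon$-close to a constant loop, so the relevant $H_0$-classes are detected by a single rational number, absorbed into $q$ by the replacement $q\rightsquigarrow q+r\,a^{d(G)}g_s^{\kappa(G)}$ exactly as in Proposition \ref{one-dimension}. The essential role of the normalization is to kill, via (\ref{equivalence-zero-component-reducted}), precisely the graphs carrying a zero-charge component with empty vertex set --- the closed-string ``vacuum bubbles'' --- which in the $\dagger$-complex contribute, through (\ref{equivalence-zero-component}), the extra power-series factor responsible for the non-canonicity noted after Proposition \ref{one-dimension}.

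Finally, for canonicity I would exhibit the inverse isomorphism explicitly as the coefficient map
$$\Phi: MCH(M,\emptyset)^{\ddagger}\to\Q[[g_s,a]],\qquad [Z]\mapsto\sum_{\kappa,d}\overline{Z}_{g_s^{\kappa}a^{d}G_0}\,g_s^{\kappa}a^{d}.$$
One checks that $\Phi$ descends to homology: if $Z=\hat{\partial}B$ then $(\hat{\partial}B)_{g_s^{\kappa}a^{d}G_0}=0$, because $B_{g_s^{\kappa}a^{d}G_0}\in C_1(\mathrm{pt})=0$ so the $\partial$-term vanishes, because no pair $(G',e')$ with a nontrivial edge contracts to a vertex-free graph so the $\delta$-term vanishes, and because $G_0$ has empty $m$ so the $\eth$-term vanishes. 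Then $\Phi(q\,\mathbf{1})=q$, and by the rank-one statement $\Phi$ is a two-sided inverse of $q\mapsto q\,\mathbf{1}$. The main obstacle I anticipate is exactly this last point: one must verify that in $\mathcal{C}^{\ddagger}$ the coefficient along the family $\{g_s^{\kappa}a^{d}G_0\}$ is genuinely an isotopy invariant, unaffected by the choices in the inductive extension --- that is, that quotienting by $\mathfrak{G}^{\dagger,\blacktriangledown}$ removes the \emph{entire} vacuum-factor ambiguity and forces the value $1$ at $G_0$. Once this is established, the remainder is a routine adaptation of Proposition \ref{one-dimension}.
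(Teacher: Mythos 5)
The paper states Lemma \ref{lemma-canonical-one-dimensional} without any proof, so there is no official argument to compare against; judged on its own terms and against the implicit intended argument (Proposition \ref{one-dimension} combined with the normalization), your proposal is correct and complete at the paper's level of rigor. Your induction is essentially the transcription the paper presumably has in mind: Lemma \ref{nice-extension-B} at graphs with $H(G)\neq\emptyset$, absorption of the $H_0$-obstruction into $q$ at vertex-free graphs, and disposal of near-constant vacuum components --- now killed outright by (\ref{equivalence-zero-component-reducted}) rather than transferred to shifted graphs under (\ref{equivalence-zero-component}). What you add, and what the paper leaves implicit, is the coefficient map $\Phi$ reading off $\overline{Z}_{g_s^{\kappa}a^{d}G_0}$; this is precisely what upgrades ``rank one free module'' (the content of Proposition \ref{one-dimension}) to ``canonically $\Q[[g_s,a]]$''. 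Injectivity of $q\mapsto q[\mathbf{1}]$ becomes immediate, since $\Phi(\hat{\partial}B)=0$: the operation $\delta_{e'}$ always creates at least one real vertex, so no graph contracts onto the vertex-free graphs $g_s^{\kappa}a^{d}G_0$, the $\eth$-term is empty, and the $\partial$-term is a boundary of a chain on a point. Your diagnosis of the role of normalization is also the correct one: in $\mathcal{C}^{\dagger}$ the relation (\ref{equivalence-zero-component}) identifies the shifts of $G_0$ with vacuum-bubble graphs, which do receive $\delta$-contributions, so the $G_0$-coefficient is not a homology invariant there; replacing that relation by (\ref{equivalence-zero-component-reducted}) severs exactly this identification.

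Two points to tighten. First, $C_1(\mathrm{pt})$ is not literally zero in the paper's chain model (a circle mapping to the point is a nonzero generator), but every boundary of a $1$-chain on a point vanishes, which is all your argument needs. Second, since $MCH(M,\emptyset)^{\ddagger}$ is built from \emph{nice} cycles, i.e.\ sequences $(Z^n)_n$, you should also record that the $G_0$-coefficient is constant in $n$: this follows by applying your same computation to the interpolating isotopies $\tilde{Z}^{int,n}$, whose $G_0$-component is a closed Borel--Moore $1$-chain on $\R$ agreeing with $Z^{n-1}_{G_0}$ and $Z^{n}_{G_0}$ at the two ends, forcing these to coincide. With these remarks your argument stands as written.
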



Given a framed link $\mathcal{L}$, 
we say that an Euler Structure $U^{\blacktriangle}$ is compatible with $\mathcal{L}$ if $U^{\blacktriangle}|_{\mathbf{w}}$ is constant when written in the trivialization associated to the frame of $\mathcal{L}$.




Fix a tubular neighborhood $T $ of $ \mathbf{w}$. Up to isotopy, the frame of $\mathcal{L}$ defines a frame on $T$. Let $M'$ be the complementary of $ \mathbf{w}$, which is equipped with the collar inducted by $T$. The frame of $\mathcal{L}$ defines a frame on the collar of $M'$. Up to isotopy, an Euler Structure $U^{\blacktriangle}$ compatible with $\mathcal{L}$ defines an Euler Structure on $M'$ compatible with the frame of the collar. Denote with $  \mathfrak{FrEul}(M')$ the set of the homology classes of these Euler Structures. $  \mathfrak{FrEul}(M')$ is a torsor on $H_1(M', \Z)$ (see \cite{Boundary-States} for more about this).

There is an obvious map 
\begin{equation} \label{framed-not-framed}
    \mathfrak{FrEul}(M') \rightarrow  \mathfrak{Eul}(M)
\end{equation}
which is compatible with the action of $H_1(M', \Z)$ and $ H_1(M, \Z)$.
The kernel $ \text{Ker} \{ H_1( \partial T , \Z) \rightarrow H_1(T, \Z)   \} \cong \Z^I$ acts transitively on the fibers of (\ref{framed-not-framed}).



\begin{proposition} \label{framed-link-cycle}  (\cite{PSPCS})


To a compatible pair $(\mathcal{L},    U^{\blacktriangle}  )$  it is associated canonically an element 
$$[Z_{(\mathcal{L},  U^{\blacktriangle}) }] \in MCH(M, \mathbf{w}| Z_{Ann0})^{\ddagger}.$$

\end{proposition}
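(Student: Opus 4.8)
The plan is to reduce Proposition \ref{framed-link-cycle} to the coherent cycle Proposition of \cite{PSPCS} (stated above for a compatible triple $(\mathbf{w},\mathbf{fr},U^{\blacktriangle})$) by choosing a global frame that extends the framing of $\mathcal{L}$, and then to show that passing to the normalized complex $\mathcal{C}^{\ddagger}$ erases the dependence on how this extension is chosen away from $\mathbf{w}$. First I would construct a candidate. Since $M$ is a compact oriented three manifold it is parallelizable, so the frame of $\mathcal{L}$ on the tubular neighborhood $T$ of $\mathbf{w}$ extends to a global orthogonal frame $\mathbf{fr} \in \mathfrak{Fr}(M,\mathbf{w})$. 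Varying the homotopy class of the extension rel $T$ realizes, through its constant Euler structure, all classes in the appropriate coset under the torsor structure of $\mathfrak{FrEul}(M')$ over $H_1(M',\Z)$ and the map (\ref{framed-not-framed}); hence I may choose $\mathbf{fr}$ so that the constant Euler structure it defines equals $U^{\blacktriangle}$, making $(\mathbf{w},\mathbf{fr},U^{\blacktriangle})$ compatible. The coherent cycle Proposition then yields a nice $MC$-cycle $Z_{(\mathbf{w},\mathbf{fr},U^{\blacktriangle})}$ with a well defined class in $MCH(M,\mathbf{w}|Z_{Ann0})$, and I define $[Z_{(\mathcal{L},U^{\blacktriangle})}]$ as its image under the projection $\mathcal{C}^{\dagger} \to \mathcal{C}^{\ddagger}=\mathcal{C}^{\dagger}/\mathcal{C}^{\dagger,\blacktriangledown}$.

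The content of the statement is that this image does not depend on the chosen extension. Two extensions agreeing with $\mathcal{L}$ on $T$ and realizing the same $U^{\blacktriangle}$ differ by a gauge transformation $g:(M,T)\to(SO(3),\mathbf{1})$, and the homotopy classes of such $g$ rel $T$ that fix the constant Euler structure are governed by the winding $\pi_3(SO(3))\cong\Z$. When $\mathbf{fr}^0$ and $\mathbf{fr}^1$ are homotopic rel $T$, I would connect them by a path $\mathbf{fr}^t\in\mathfrak{Fr}(M,\mathbf{w})$ and feed this one parameter family into the isotopy version of the coherent cycle construction, using the extension mechanism of Lemma \ref{nice-extension-isotopy}, to produce an isotopy between $Z_{(\mathbf{w},\mathbf{fr}^0,U^{\blacktriangle})}$ and $Z_{(\mathbf{w},\mathbf{fr}^1,U^{\blacktriangle})}$. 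Their classes then already agree in $MCH(M,\mathbf{w}|Z_{Ann0})$, and a fortiori in the normalized $MCH(M,\mathbf{w}|Z_{Ann0})^{\ddagger}$.

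It remains to treat extensions that differ by the nontrivial winding in $\pi_3(SO(3))$. Here the change of global trivialization away from $\mathbf{w}$ inserts closed surface contributions, so that the difference $Z_{(\mathbf{w},\mathbf{fr}^1,U^{\blacktriangle})}-Z_{(\mathbf{w},\mathbf{fr}^0,U^{\blacktriangle})}$ is a combination of $MC$-cycles supported on graphs carrying a component $c$ with $V_c=\emptyset$, that is on $\mathfrak{G}^{\dagger,\blacktriangledown}$; such a class lies in the image of $\mathcal{C}^{\dagger,\blacktriangledown}$ and is therefore killed by the relation (\ref{equivalence-zero-component-reducted}), so the two classes coincide in $MCH(M,\mathbf{w}|Z_{Ann0})^{\ddagger}$. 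I expect this identification to be the main obstacle: one must show precisely that a change of the global frame supported away from $\mathbf{w}$ affects the coherent cycle only through the vacuum (empty vertex set) sector. This requires unpacking the frame dependence of the construction of \cite{PSPCS}; the point is that near $\mathbf{w}$ the frame is pinned by $\mathcal{L}$ and governs the $Z_{Ann0}$ and $\clubsuit$ data entering forgetful compatibility, while the residual frame data enters only through the normalization of closed components, whose winding contributes exactly the $\mathfrak{G}^{\dagger,\blacktriangledown}$ bubbles.

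Finally, independence of the remaining auxiliary choices, namely the tubular neighborhood $T$ and the induced collar on $M'$, is routine: the space of such choices is contractible, so any two lead to isotopic coherent cycles by the same one parameter argument, and hence to equal classes in the normalized homology. Assembling the three comparisons shows that $[Z_{(\mathcal{L},U^{\blacktriangle})}] \in MCH(M,\mathbf{w}|Z_{Ann0})^{\ddagger}$ is independent of all choices, which is the asserted canonicity.
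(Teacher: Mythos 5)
There is no proof in the paper to compare against: Proposition \ref{framed-link-cycle} is quoted from \cite{PSPCS} (listed as ``in preparation''), and the surrounding text only sets up the objects $T$, $M'$, $\mathfrak{FrEul}(M')$ and the map (\ref{framed-not-framed}). Note that this setup suggests the intended construction lives on the link complement $M'$ with its collar frame, not on a global frame of $M$ --- which matters for the first gap below.

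Your opening step is wrong as stated: parallelizability of $M$ guarantees that \emph{some} global frame exists, not that the frame of $\mathcal{L}$ on the tubular neighborhood $T$ extends to one. The difference between the link frame on $T$ and a reference global frame is a map $g: T \rightarrow SO(3)$, and extending $g$ over $M$ meets an obstruction in $H^2(M,T;\pi_1(SO(3))) $ with $\pi_1(SO(3)) \cong \Z/2$. This obstruction is genuinely nonzero in examples: for an unknot in $S^3$, changing the framing by one twist changes the restriction of $g$ to $\mathbf{w}$ by the nontrivial loop in $SO(3)$, so the frames of odd-framed and even-framed unknots cannot both extend globally; one parity does not extend at all. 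This is exactly the kind of dependence recorded by Lemma \ref{twist-frame} (the cycle changes by $\alpha(g_s,a)$ under a twist), so the failure is not a technicality you can isotope away; your reduction to the triple $(\mathbf{w},\mathbf{fr},U^{\blacktriangle})$ simply does not get off the ground for such links. Any correct argument has to work with frames and Euler structures on $M'$ rel the collar, which is what the paper's $\mathfrak{FrEul}(M')$ formalism is for.

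Second, even where a global extension exists, the load-bearing step of your argument --- that two extensions differing by the winding in $\pi_3(SO(3))$ produce coherent cycles whose difference is supported on graphs with a component $c$ having $V_c=\emptyset$, hence dies in $\mathcal{C}^{\ddagger}=\mathcal{C}^{\dagger}/\mathcal{C}^{\dagger,\blacktriangledown}$ --- is asserted, not proved. You acknowledge it ``requires unpacking the frame dependence of the construction of \cite{PSPCS}'', but that construction is precisely what is unavailable, and this claim is essentially equivalent to the canonicity being proved; as written, the proposal reduces the proposition to itself at this point. The pieces that do work (the isotopy argument via Lemma \ref{nice-extension-isotopy} for extensions homotopic rel $T$, and contractibility of the space of tubular neighborhoods) are the routine parts; the two items above are where the actual content lies, and both are gaps.
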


\subsubsection{Skein}


Let $\mathbf{w}_{\times}$ be a link with a crossing singularity.
Denote by $\mathbf{w}_+$ the link overcrossing and by $\mathbf{w}_-$ the link undercrossing. Let $\mathbf{w}_0$ be the only link obtained by removing the crossing point in the only orientation-preserving way. 

We consider frame of $\mathbf{w}_{\times}$ which belongs to the plane of link around the singular point.
This frame can be deformed  obtaining framed links $\mathcal{L}_+,\mathcal{L}_-,\mathcal{L}_0$ corresponding to 
 $\mathbf{w}_+$, $\mathbf{w}_-$, $\mathbf{w}_0$.

Let $U^{\blacktriangle}$ be orthogonal to the plane defined by  $\mathbf{w}_{\times}$ on a small ball surrounding the singularity.
 Let $U^{\blacktriangle}_+,U^{\blacktriangle}_-,U^{\blacktriangle}_0$
compatible with 
 $\mathcal{L}_+,\mathcal{L}_-,\mathcal{L}_0$ respectively obtained deforming $U^{\blacktriangle}$.






From Proposition (\ref{framed-link-cycle}) we obtain the  $MC$-cycles 
$ Z_{\mathcal{L}_+,  U^{\blacktriangle}_+ } ,  Z_{\mathcal{L}_-, U^{\blacktriangle}_- }  ,   Z_{\mathcal{L}_0, U^{\blacktriangle}_0  } .$ 

\begin{lemma}   \label{skein-relation}   (\cite{PSPCS})
There exists universal   formal power series $A(g_s,a), \beta(g_s,a)$ in the formal variables $g_s$ and $a$,  such that there exists an isotopy of nice $MC$-cycles between
$\beta Z_{\mathcal{L}_+, U^{\blacktriangle}_+  } -  \beta^{-1} Z_{\mathcal{L}_-, U^{\blacktriangle}_-  }$ and $ A  Z_{\mathcal{L}_0,  U^{\blacktriangle}_0  }$.
The isotopy is well defined up to isotopy of isotopies.


The leading terms of $A$ and $\beta$ are given by
$$ \beta = 1 + .... , \thickspace A = g_s(1 + ....) .$$  
 The following reflection symmetry property holds
\begin{equation} \label{simmetry-opposite}
\beta(-g_s,a)  = \beta(g_s,a)^{-1},  \bigskip A(-g_s,a) = - A(g_s,a).
\end{equation}
\end{lemma}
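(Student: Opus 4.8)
The plan is to reduce the skein relation to a linear-algebra statement in the rank-two module attached to the singular link $\mathbf{w}_{\times}$, and then to solve for $\beta$ and $A$ order by order in the maximal ideal of $\Q[[g_s,a]]$. First I would record the two smoothings of the crossing as multi-loops: the oriented resolution $\mathbf{w}_0$ and the ``non-reconnected'' resolution, which as a multi-loop is the common underlying multi-loop $\mathbf{w}_{\infty}$ of both $\mathbf{w}_+$ and $\mathbf{w}_-$ (recall that over- and under-crossings coincide as multi-loops). By Lemma \ref{crossing-dimension} with $n=1$, the module $MCH(M,\mathfrak{w}_{\times})^{\ddagger}$ is free of rank two, and I would fix the basis $(e_{\infty},e_0)$ given by the coherent generators $Z^{\heartsuit}_{\mathbf{w}_{\infty}}$ and $Z^{\heartsuit}_{\mathbf{w}_0}$ as in Proposition \ref{one-dimension}. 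Since isotopy classes of nice $MC$-cycles are by definition the classes in $MCH$, an equality in $MCH(M,\mathfrak{w}_{\times})^{\ddagger}$ realizes exactly the isotopy asserted in the statement.

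Next, using Proposition \ref{transfer-map-proposition} (via Lemma \ref{deformation-cycles}) I would build the three degeneration families of embedded links collapsing, respectively, $\mathbf{w}_+$, $\mathbf{w}_-$ and $\mathbf{w}_0$ onto the singular crossing $\mathbf{w}_{\times}$, and use the resulting transfer maps to land $Z_{\mathcal{L}_+}, Z_{\mathcal{L}_-}, Z_{\mathcal{L}_0}$ in $MCH(M,\mathfrak{w}_{\times})^{\ddagger}$. The geometric content is the leading-order computation of these images: to leading order each of the $\pm$-cycles stays in the non-reconnected channel, while the reconnection into $\mathbf{w}_0$ is produced by the bracket operator $\delta$ and is therefore of order $g_s$; symmetrically $Z_{\mathcal{L}_0}$ has leading term along $e_0$ with an $O(g_s)$ spill into $e_{\infty}$. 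Thus in the basis $(e_{\infty},e_0)$ one has
\begin{align*}
\mathrm{transfer}(Z_{\mathcal{L}_+}) &= (1+\cdots,\ c_+ g_s + \cdots), \\
\mathrm{transfer}(Z_{\mathcal{L}_-}) &= (1+\cdots,\ c_- g_s + \cdots), \\
\mathrm{transfer}(Z_{\mathcal{L}_0}) &= (d\, g_s + \cdots,\ 1+\cdots).
\end{align*}
The main obstacle is exactly this step: pinning down these leading coefficients requires the explicit evaluation of the Chern--Simons-type propagator and of the $\delta$-bracket on the two strands near the double point, which is the analytic heart of the construction carried out in \cite{PSPCS}.

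Finally I would solve $\beta\,\mathrm{transfer}(Z_{\mathcal{L}_+}) - \beta^{-1}\mathrm{transfer}(Z_{\mathcal{L}_-}) = A\,\mathrm{transfer}(Z_{\mathcal{L}_0})$ in the two coordinates. The $e_{\infty}$-equation reads $\beta(1+\cdots) - \beta^{-1}(1+\cdots) = A(d\,g_s+\cdots)$, whose order-zero part forces the leading term $\beta = 1+\cdots$; the $e_0$-equation then gives $A = \beta\, c_+ g_s - \beta^{-1} c_- g_s + \cdots$, so that $A = g_s(1+\cdots)$ after normalizing the generator. Because the leading terms of the three images are pairwise independent over the residue field $\Q$, any two of them form a basis and the $\Q[[g_s,a]]$-module of relations among the three is free of rank one; this yields simultaneously the existence of $(\beta,A)$ and, together with the two-parameter independence of Proposition \ref{transfer-map-proposition}, the well-definedness of the isotopy up to isotopy of isotopies.

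The reflection symmetry (\ref{simmetry-opposite}) then follows formally from the uniqueness of this relation. The orientation-reversing involution $g_s\mapsto -g_s$ exchanges $\mathcal{L}_+$ with $\mathcal{L}_-$ and fixes $\mathcal{L}_0$, hence sends the normalized relation $\beta\,\mathrm{transfer}(Z_{\mathcal{L}_+}) - \beta^{-1}\mathrm{transfer}(Z_{\mathcal{L}_-}) = A\,\mathrm{transfer}(Z_{\mathcal{L}_0})$ to $\beta(-g_s,a)\,\mathrm{transfer}(Z_{\mathcal{L}_-}) - \beta(-g_s,a)^{-1}\mathrm{transfer}(Z_{\mathcal{L}_+}) = A(-g_s,a)\,\mathrm{transfer}(Z_{\mathcal{L}_0})$. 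Comparing this, after multiplying by $-1$, with the unique rank-one relation gives $\beta(-g_s,a)=\beta(g_s,a)^{-1}$ and $A(-g_s,a)=-A(g_s,a)$, which is precisely (\ref{simmetry-opposite}).
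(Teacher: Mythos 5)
First, a framing point: this paper contains no proof of Lemma \ref{skein-relation} at all --- the lemma is stated with the citation (\cite{PSPCS}), a reference listed as ``in preparation'' --- so there is no in-paper argument to compare yours against line by line. Judged within the paper's framework, your skeleton is sensible and uses the right tools: collapse the three links onto the singular link $\mathbf{w}_{\times}$ and transfer the coherent classes into $MCH(M,\mathfrak{w}_{\times})^{\ddagger}$ via Proposition \ref{transfer-map-proposition}; invoke Lemma \ref{crossing-dimension} (with $n=1$, the two smoothings being $\mathbf{w}_{\infty}$ and $\mathbf{w}_0$) to get a rank-two free module; note that the syzygy module of three elements whose evaluation matrix has a unit $2\times 2$ minor is projective of rank one over the local ring $\Q[[g_s,a]]$, hence free; normalize a generator to the shape $(\beta,-\beta^{-1},-A)$ (this needs a square root of a unit congruent to $1$, which exists in $\Q[[g_s,a]]$); and deduce (\ref{simmetry-opposite}) from uniqueness of the rank-one syzygy, since the scalar $u$ relating the original and reflected relations satisfies $u^{2}=1$ and the leading terms exclude $u=-1$.

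There are, however, genuine gaps, and you flag the central one yourself. (1) The entire quantitative content sits in the leading-order shape of the transferred classes, above all that the $e_0$-components of $\mathrm{transfer}(Z_{\mathcal{L}_+})$ and $\mathrm{transfer}(Z_{\mathcal{L}_-})$ differ at order $g_s$ by a \emph{unit} (indeed by $1$): if $c_+=c_-$ the syzygy forces $A=O(g_s^2)$ or a degenerate relation, and the assertion $A=g_s(1+\cdots)$ fails. That nonvanishing is exactly the propagator/$\delta$-bracket evaluation near the double point, i.e.\ the content of \cite{PSPCS}; deferring it to that reference makes the argument circular, since the lemma being proved is itself quoted from \cite{PSPCS}. (2) You identify ``isotopy of nice $MC$-cycles, well defined up to isotopy of isotopies'' with equality in $MCH(M,\mathfrak{w}_{\times})^{\ddagger}$. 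In this paper those are distinct equivalence relations: homological triviality means $\hat{\partial}B=Z$, while isotopy is a one-parameter $\hat{\partial}$-closed family, and the remark following Lemma \ref{extension-isotopies-inductive} states explicitly that promoting the one to the other meets obstructions in $H_0(\mathfrak{L}_G(M))$; an argument controlling the limit supports (in the spirit of Lemma \ref{deformation-cycles} and Lemma \ref{loops-contraction}) is needed to produce the actual isotopy and its well-definedness up to isotopy of isotopies, and it is missing. (3) The reflection symmetry rests on an involution of nice $MC$-cycles exchanging $Z_{\mathcal{L}_+,U^{\blacktriangle}_+}$ and $Z_{\mathcal{L}_-,U^{\blacktriangle}_-}$, fixing $Z_{\mathcal{L}_0,U^{\blacktriangle}_0}$, and acting on the coefficient ring by $g_s\mapsto -g_s$; this involution is asserted, not constructed, and verifying its action on the grading (where the anti-invariance (\ref{reverse-propagator}) and the orientation local systems $\mathfrak{o}(H(G))$ enter) is a nontrivial piece of work in its own right.
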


Given a framed link $\mathcal{L}$ denote by $\mathcal{L}^{+1}$  the framed link whose frame is the twisting by $+1$ of the frame of $\mathcal{L}$. 
Let $U^{\blacktriangle}_{twist}$ be tangent to $\mathbf{w}$ in a neighborhood of the twist.

\begin{lemma}   \label{twist-frame}  (\cite{PSPCS})
There exists $\alpha(g_s, a)$ universal formal power series in $g_s$ and $a$ such that
\begin{equation}
[ Z_{\mathcal{L}^{+1},  U^{\blacktriangle}_{twist}  }] = \alpha(g_s,a)  [Z_{\mathcal{L},  U^{\blacktriangle}_{twist}  }].
\end{equation}
\end{lemma}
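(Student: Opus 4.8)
The plan is to note that both framed-link cycles in the statement live in a single rank-one module and that their ratio is a purely local, hence universal, quantity, which can then be computed by the skein relation.

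Since $\mathcal{L}^{+1}$ and $\mathcal{L}$ have the same underlying link $\mathbf{w}$, Proposition \ref{framed-link-cycle} assigns to each of them a canonical class in the \emph{same} normalized module $MCH(M, \mathbf{w}| Z_{Ann0})^{\ddagger}$. As in Proposition \ref{one-dimension}, this module is free of rank one over $\Q[[g_s,a]]$ and the coherent class $[Z_{\mathcal{L}, U^{\blacktriangle}_{twist}}]$ is a generator (its leading loop datum on $G^{\heartsuit}$ is $\mathbf{w}$, property (\ref{generator-prop2})). Consequently there is a unique $\alpha(g_s,a) \in \Q[[g_s,a]]$ with $[Z_{\mathcal{L}^{+1}, U^{\blacktriangle}_{twist}}] = \alpha\,[Z_{\mathcal{L}, U^{\blacktriangle}_{twist}}]$; the whole content of the lemma is that $\alpha$ is independent of $(\mathcal{L}, U^{\blacktriangle}_{twist})$.

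I would deduce this universality from the locality of the coherent-cycle construction of \cite{PSPCS}. The $+1$ twist modifies the frame only inside a small ball $B$ meeting $\mathbf{w}$ in a single arc, and $U^{\blacktriangle}_{twist}$ is chosen tangent to $\mathbf{w}$ near the twist, so that it is compatible with both framings throughout $B$; outside $B$ the data defining $Z_{\mathcal{L}^{+1}}$ and $Z_{\mathcal{L}}$ agree. Using the factorization property of coherent cycles I would express the two cycles as the gluing of a common exterior piece with the two local models in $B$, so that the exterior data cancel in the ratio and $\alpha$ is computed entirely inside $B$. The passage to the normalized complex $\ddagger$ is what makes this local contribution a genuine scalar: by Lemma \ref{lemma-canonical-one-dimensional} the normalized module of the empty link is canonically $\Q[[g_s,a]]$, so the $\mathbf{w}$-free local part of the ratio is a well-defined element of $\Q[[g_s,a]]$, independent of the ambient link.

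Finally, to pin down and compute $\alpha$ I would realize the twist as a Reidemeister-I curl and resolve its self-crossing with the skein relation $\beta Z_{+} - \beta^{-1} Z_{-} = A Z_{0}$ of Lemma \ref{skein-relation}. The two crossing resolutions are the $+1$- and $-1$-twisted arcs, whose classes are $\alpha\,[Z_{\mathcal{L}}]$ and $\alpha^{-1}[Z_{\mathcal{L}}]$ (the $-1$-twist factor being $\alpha^{-1}$ since twisting by $+1$ and then $-1$ is the identity), while the oriented smoothing splits off a small unknot and contributes $u\,[Z_{\mathcal{L}}]$ for the universal split-unknot value $u(g_s,a)$ fixed by Lemma \ref{lemma-canonical-one-dimensional}. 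The skein relation then reads $\beta\alpha - \beta^{-1}\alpha^{-1} = A u$, a quadratic equation whose root with $\alpha = 1 + \dots$ determines $\alpha$ as a universal power series; since $A = g_s(1+\dots)$ and $\beta = 1 + \dots$, the leading term is $\alpha = 1 + \dots$, and the reflection symmetry (\ref{simmetry-opposite}) of $A$ and $\beta$ induces a corresponding symmetry of $\alpha$. The main obstacle is the universality step: isolating the twist as a genuinely local contribution rests on the factorization/locality of the coherent-cycle construction together with careful bookkeeping of Euler-structure compatibility and the $Z_{Ann0}$ data --- precisely the delicate content of \cite{PSPCS}; granting locality, both the existence and the computation of $\alpha$ are formal.
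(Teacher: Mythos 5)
The paper itself contains no proof of this lemma: it is imported from \cite{PSPCS} (``in preparation''), as are the skein relation (Lemma \ref{skein-relation}), the unknot evaluation (Lemma \ref{unknot}) and the coherent-cycle construction (Proposition \ref{framed-link-cycle}) that you invoke, so your proposal can only be measured against the paper's toolkit rather than an actual argument in the text. Your first step is sound modulo small imprecisions: both classes lie in one module, and Proposition \ref{one-dimension} (stated for $MCH(M,\mathbf{w})^{\dagger}$, so you need its extension to the normalized, $Z_{Ann0}$-constrained module where Proposition \ref{framed-link-cycle} actually places the classes) together with the paper's assertion that coherent cycles are generators satisfying (\ref{generator-prop2}) gives, for each fixed $(\mathcal{L},U^{\blacktriangle})$, a unique $\alpha_{\mathcal{L}}\in\Q[[g_s,a]]$ with $[Z_{\mathcal{L}^{+1}}]=\alpha_{\mathcal{L}}[Z_{\mathcal{L}}]$. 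The whole content of the lemma is universality of $\alpha_{\mathcal{L}}$, and there your argument has genuine gaps.

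First, the Euler-structure bookkeeping is not a side issue; it breaks the skein computation as written. Lemma \ref{twist-frame} concerns $U^{\blacktriangle}_{twist}$, which is \emph{tangent} to $\mathbf{w}$ near the twist, whereas Lemma \ref{skein-relation} applies only to Euler structures obtained by deforming a $U^{\blacktriangle}$ \emph{orthogonal} to the plane of the link near the crossing. Resolving the Reidemeister-I curl therefore yields a relation among $[Z_{\mathcal{L}^{+1},U^{\blacktriangle}_+}]$, $[Z_{\mathcal{L}^{-1},U^{\blacktriangle}_-}]$ and $[Z_{\mathcal{L}\sqcup\mathcal{L}_{unknot}},U^{\blacktriangle}_0]$, which are not the classes appearing in the lemma; comparing the two kinds of decoration can introduce further universal factors (compare the $\theta$-lemma for $U^{\blacktriangle}+C_{trivial}$), so the equation $\beta\alpha-\beta^{-1}\alpha^{-1}=Au$ is not justified. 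Second, the split term: Lemma \ref{unknot} evaluates only a standalone unknot, and Lemma \ref{lemma-canonical-one-dimensional} only supplies the identification $MCH(M,\emptyset)^{\ddagger}\cong\Q[[g_s,a]]$; nothing in the paper provides the disjoint-union multiplicativity $[Z_{\mathcal{L}\sqcup\mathcal{L}_{unknot}}]=r(g_s,a)[Z_{\mathcal{L}}]$ --- the factorization product $\boxtimes$ is developed for Gromov--Witten charges $\beta$, not for splitting off link components in the normalized complex --- so your $u$ is undefined. Third, selecting the root of the quadratic requires knowing $\alpha_{\mathcal{L}}=1+O(g_s,a)$, which you assert but must deduce (for instance from both coherent cycles having leading $G^{\heartsuit}$-datum exactly $\mathbf{w}$), since the other root has constant term $-1$. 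If these three points were supplied, your scheme would in fact yield universality without any locality argument --- each $\alpha_{\mathcal{L}}$ would be the distinguished root of a single universal equation, a genuinely attractive structural shortcut --- but as written the proof delegates exactly the delicate content that \cite{PSPCS} is supposed to contain.
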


Let  $\mathcal{L}_{unknot}$ be the unknot equipped with his canonical frame.
Assume that the knot lives inside a small ball $B$ which we identify with $\R^3$.
Up to isotopy we can assume that on $B$ the link together with its frame lives in a two-dimensional plane. We assume  $ U^{\blacktriangle}_{unknot}$ be orthogonal to this plane. 

\begin{lemma} \label{unknot}   (\cite{PSPCS})
There exists an universal  formal power series $r(g_s, a)$ such that there exists an isotopy of nice $MC$ cycles between  
$Z_{\mathcal{L}_{unknot} ,  U^{\blacktriangle}_{unknot} } $ and $r(g_s, a) $.
(Here we consider $r(g_s, a) \in MCH(M, \emptyset)^{\ddagger}$ using  Lemma \ref{lemma-canonical-one-dimensional} .)
\end{lemma}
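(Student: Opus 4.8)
The plan is to realize the claimed power series $r(g_s,a)$ as the image of the coherent cycle of the unknot under the transfer map induced by collapsing the unknot to a point inside the ball $B \cong \R^3$. By Lemma \ref{lemma-canonical-one-dimensional} the target $MCH(M,\emptyset)^{\ddagger}$ is canonically $\Q[[g_s,a]]$, so any class landing there is automatically a formal power series, and this power series will be our $r$.

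First I would set up a one-parameter family of multi-loops $\tilde{\mathbf{w}} = ( \mathbf{w}^t )_{t \in [0,1]}$ supported in $B$, with $\mathbf{w}^0 = \mathbf{w}_{unknot}$, with $\mathbf{w}^t$ an embedded circle of radius shrinking to zero as $t \to 1$, and $\mathbf{w}^1$ the constant (point) loop. Since the construction takes place in the flat plane with $U^{\blacktriangle}_{unknot}$ orthogonal, the canonical (blackboard) framing accumulates no self-twist along the shrinking, so no application of Lemma \ref{twist-frame} is needed. Because $\mathbf{w}^t$ is an embedded link for all $t \in [0,1)$ and the current represented by $\mathbf{w}^1$ is the zero current, Lemma \ref{deformation-cycles} and Proposition \ref{transfer-map-proposition} apply and produce a transfer map
\[
transfer_{\tilde{\mathbf{w}}} : MCH(M, \mathbf{w}_{unknot})^{\ddagger} \longrightarrow MCH(M, \emptyset)^{\ddagger} = \Q[[g_s,a]],
\]
together with an explicit isotopy $\tilde Z \in \tilde{\mathcal{Z}}_{\tilde{\mathbf{w}}}$ realizing it. I would then set $r(g_s,a) := transfer_{\tilde{\mathbf{w}}} [Z_{\mathcal{L}_{unknot}, U^{\blacktriangle}_{unknot}}]$, the coherent cycle being supplied by Proposition \ref{framed-link-cycle}. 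Applying the isotopy $\tilde Z$ to this cycle yields exactly the required isotopy of nice $MC$-cycles between $Z_{\mathcal{L}_{unknot}, U^{\blacktriangle}_{unknot}}$ and $r(g_s,a)$, and its well-definedness up to isotopy of isotopies follows from the independence statement \eqref{tranfer-indipendent}.

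Next I would establish universality, i.e.\ that $r$ depends neither on $M$ nor on the embedded ball. Here I would use that the entire deformation is localized in $B \cong \R^3$ with the standard flat unknot and orthogonal Euler structure, and that the $MC$-cycle together with its collapse is assembled from the local graph data supported near $\mathbf{w}^t$. Concretely, for graphs $G$ whose loops approach $\mathbf{w}^t$ the chains $\overline Z_{G,m}$ are determined, via forgetful compatibility and the perturbation scheme, by universal local models in $\R^3$; the normalization relation \eqref{equivalence-zero-component-reducted} discards the spurious closed components left behind by the collapse, so the scalar surviving in $\Q[[g_s,a]]$ is read off from these universal local contributions alone. Since by Proposition \ref{one-dimension} the module $MCH(M, \mathbf{w}_{unknot})^{\ddagger}$ is rank one over $\Q[[g_s,a]]$ and the transfer sends its generator to a generator of the target, the coefficient $r$ is intrinsic and universal.

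The main obstacle is controlling the degenerate moment $t=1$, where the embedded circle collapses: I must check that the transfer map extends through this non-embedded time and that the limit support genuinely becomes the zero current, so that the outcome lands in $MCH(M,\emptyset)^{\ddagger}$ rather than retaining a residual one-dimensional piece. This amounts to verifying that the shrinking circle, once smaller than the relevant perturbation scale $\epsilon$, behaves like a constant loop in the limit-support bookkeeping and is therefore killed by \eqref{equivalence-zero-component-reducted}, combined with the locality argument above to guarantee that the surviving scalar is universal. The remaining ingredients --- existence of the coherent cycle, the rank-one property, and independence of the isotopy class --- are supplied verbatim by the cited results.
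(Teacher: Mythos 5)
There is a preliminary point you could not have known: the paper contains no proof of this lemma at all. Lemma \ref{unknot} is imported verbatim from \cite{PSPCS}, which is listed as ``in preparation'', so there is no in-paper argument to compare your proposal against. Judged on its own terms, your strategy --- shrink the unknot inside $B$, push the coherent cycle $Z_{\mathcal{L}_{unknot},U^{\blacktriangle}_{unknot}}$ through the transfer machinery of Lemma \ref{deformation-cycles} and Proposition \ref{transfer-map-proposition}, and read off a scalar via Lemma \ref{lemma-canonical-one-dimensional} --- is a reasonable reconstruction built from tools this paper genuinely provides, and Proposition \ref{one-dimension} indeed makes the output intrinsic once it exists. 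But the proposal has two concrete gaps.

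First, the transfer machinery is stated only for families $(\mathbf{w}^t)_t$ that degenerate at $t=1$ to a one-dimensional current $\mathfrak{w}$; its intended use is crossing singularities (cf.\ Lemma \ref{crossing-dimension} and the skein relation). A circle shrinking to a point degenerates to a constant loop, i.e.\ the zero current, and nothing in Proposition \ref{transfer-map-proposition} identifies the target $MCH(M,\mathfrak{w}=0)$ with $MCH(M,\emptyset)^{\ddagger}$: in this formalism a constant loop is \emph{not} the empty multi-loop. The link loops are carried by vertices of $V^*$, and when such a loop becomes constant the corresponding vertex becomes a degenerate vertex in $D^*$ (the constant-loop boundary-face mechanism of subsection \ref{chain-loop-section}); converting that data into an element of $\Q[[g_s,a]]$ is precisely the step that must be supplied, and it is where the nontrivial series $r$ is created. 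Second --- and this is an outright error --- you attribute this conversion to relation (\ref{equivalence-zero-component-reducted}). That relation is the \emph{normalization}: it sets to zero any graph having a vertexless component in $Comp_0$, it does not apply to the collapsed link vertex (which lives in $V^*/D^*$, not in a $Comp_0$ component), and if it did apply it would force $r=0$, trivializing the lemma. The relation that actually produces the scalar is the second, unlabeled equivalence following (\ref{equivalence-zero-component}), which trades $D^*$ for a factor $g_s^{|D^*|}a^{|D^*|}$. Together with the fact that your universality claim is asserted via a locality heuristic rather than proven, the proposal should be regarded as a plausible outline whose two load-bearing steps (the degenerate $t=1$ analysis and universality) remain open.
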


We also need to consider the action of $\ker (H_1(M' , \Z) \rightarrow H_1(M , \Z) )$. 
Let $C_{trivial}$ be a closed loop with support in a small ball, linking $\mathbf{w}$ one time positively  . 
There exists an isotopy between $U^{\blacktriangle}+ C_{trivial}$ and $U^{\blacktriangle}$ uniquely determined up to isotopy of isotopies.  

\begin{lemma}  (\cite{PSPCS})
    There exists an universal  formal power series $\theta(g_s, a)$ such that there exists an isotopy of nice $MC$ cycles between  
 $Z_{\mathcal{L}, U^{\blacktriangle} + C_{trivial}}$ and $  \theta(g_s ,a ) Z_{\mathcal{L}, U^{\blacktriangle}}$.
\end{lemma}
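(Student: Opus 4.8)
The plan is to deduce the lemma from the rank-one freeness of the normalized homology established in Proposition~\ref{one-dimension}, together with a locality argument that pins down the universality of $\theta$. First I would verify that the two cycles $Z_{\mathcal{L},U^{\blacktriangle}}$ and $Z_{\mathcal{L},U^{\blacktriangle}+C_{trivial}}$ are classes in one and the same module. The loop $C_{trivial}$ is supported in a small ball and links $\mathbf{w}$ once, so it represents a meridian class in $H_1(M',\Z)$ that dies in $H_1(M,\Z)$; accordingly $C_{trivial}$ lies in $\ker\{H_1(M',\Z)\to H_1(M,\Z)\}$ and, by the isotopy of Euler structures between $U^{\blacktriangle}+C_{trivial}$ and $U^{\blacktriangle}$ recorded just before the statement, the two framed Euler structures have the same image in $\mathfrak{Eul}(M)$ under the map (\ref{framed-not-framed}). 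By (\ref{isotopy-classes-Ann}) they therefore determine isotopic data $Z_{Ann0}$, so both coherent cycles produced by Proposition~\ref{framed-link-cycle} live in the single module $MCH(M,\mathbf{w}\,|\,Z_{Ann0})^{\ddagger}$. Crucially, this module depends only on the $\mathfrak{Eul}(M)$-class, whereas the particular generator depends on the finer framed lift in $\mathfrak{FrEul}(M')$, which genuinely changes when $C_{trivial}$ is added; this is what produces a nontrivial ratio.

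Since $\mathbf{w}$ is an embedded link, $MCH(M,\mathbf{w}\,|\,Z_{Ann0})^{\ddagger}$ is free of rank one over $\Q[[g_s,a]]$ by Proposition~\ref{one-dimension}, and $Z_{\mathcal{L},U^{\blacktriangle}}$ is a generator because it satisfies (\ref{generator-prop2}). Hence there is a unique $\theta(g_s,a)\in\Q[[g_s,a]]$ with $[Z_{\mathcal{L},U^{\blacktriangle}+C_{trivial}}]=\theta\,[Z_{\mathcal{L},U^{\blacktriangle}}]$, and running the same construction with $-C_{trivial}$ shows $\theta$ is a unit. I would then build the actual isotopy of nice $MC$-cycles and compute $\theta$ simultaneously, by the inductive obstruction argument from the surjectivity half of Proposition~\ref{one-dimension}. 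Extending graph-by-graph with Lemma~\ref{nice-extension-isotopy}, the only obstructions to an isotopy between $Z_{\mathcal{L},U^{\blacktriangle}+C_{trivial}}$ and $\theta\,Z_{\mathcal{L},U^{\blacktriangle}}$ are concentrated on graphs $G$ with $H(G)=\emptyset$ and are classes in $H_0(\mathfrak{L}_G(M))$, as in the remark preceding Lemma~\ref{nice-extension-isotopy-B} and in (\ref{obstructions}). On the generating graph $G^{\heartsuit}$ the obstruction is a rational multiple of $[\mathbf{w}]$; one fixes the matching coefficient of $\theta$, order by order in $g_s$ and $a$, to annihilate it, after which the remaining $H(G)=\emptyset$ obstructions are $\Q[[g_s,a]]$-multiples of $[\mathbf{w}]$ and vanish automatically, exactly as before.

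The hard part is the universality of $\theta$, i.e.\ its independence of $(M,\mathcal{L})$. Here I would invoke the factorization/locality built into the forgetful compatibility of Section~\ref{forget-MC-section}. The difference between the two framed Euler structures, and therefore the obstruction cocycles that read off each coefficient of $\theta$ on $G^{\heartsuit}$, is supported near the small ball $B$ containing $C_{trivial}$ and involves only the single strand of $\mathbf{w}$ threading $B$. I would show, using the factorization relations, that each coefficient of $\theta$ is computed entirely from the local model of a small positively-linking loop alongside one strand, a model common to all links, so that comparison against the standard local picture (for instance the unknot in a ball as in Lemma~\ref{unknot}) forces the coefficients to coincide for every $(M,\mathcal{L})$. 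Making this localization rigorous — precisely isolating the part of each $H_0(\mathfrak{L}_G(M))$-obstruction that survives and verifying that the ambient topology of $M$ and the remainder of $\mathbf{w}$ genuinely drop out — is the principal technical obstacle, and is where the factorization property does the essential work.
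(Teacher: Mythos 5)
First, a point you could not have known but that frames this review: the paper does not prove this lemma at all. Like Lemmas \ref{skein-relation}, \ref{twist-frame} and \ref{unknot}, it is imported wholesale from \cite{PSPCS}, which is listed as ``in preparation.'' So your proposal can only be measured against the machinery the paper does provide, not against a proof it contains. Within that machinery, your first two steps are a reasonable reconstruction: $C_{trivial}$ lies in $\ker\{H_1(M',\Z)\to H_1(M,\Z)\}$, the paper supplies an isotopy between $U^{\blacktriangle}+C_{trivial}$ and $U^{\blacktriangle}$, so the two coherent cycles of Proposition \ref{framed-link-cycle} live in one module; rank-one freeness then produces a unique ratio $\theta$, and building the isotopy graph-by-graph while tuning the coefficients of $\theta$ to kill the $H_0(\mathfrak{L}_G(M))$-obstructions mirrors the surjectivity half of Proposition \ref{one-dimension}. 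Two caveats even here: Proposition \ref{one-dimension} is stated for $MCH(M,\mathbf{w})^{\dagger}$, while Proposition \ref{framed-link-cycle} places the coherent cycles in the \emph{normalized} module $MCH(M,\mathbf{w}|Z_{Ann0})^{\ddagger}$, and the paper asserts a statement about the quotient only for the empty link (Lemma \ref{lemma-canonical-one-dimensional}); you need to check that passing to $\mathcal{C}^{\ddagger}=\mathcal{C}^{\dagger}/\mathcal{C}^{\dagger,\blacktriangledown}$ preserves freeness and the generator property (\ref{generator-prop2}). Also, the obstructions at graphs with $H(G)=\emptyset$ and $V(G)\neq I$ do not ``vanish automatically'': in the paper's own argument they are disposed of by shrinking the extra loops via relation (\ref{equivalence-zero-component}).

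The genuine gap is universality, and the tool you invoke for it is the wrong one. The factorization property of this paper --- the maps (\ref{restriction}), (\ref{fact-map}) and the induced product (\ref{product-nice-homology}) --- decomposes cycles according to a splitting of the topological charge $\beta=\beta_1+\beta_2\in\Gamma$ (it is what makes the partition function exponentiate); it contains no statement localizing a computation to a ball $B\subset M$. Forgetful compatibility likewise governs the removal of unstable components, not spatial support. So neither can deliver what your argument needs: that the $H_0(\mathfrak{L}_G(M))$-obstruction classes determining each coefficient of $\theta$ are computed in a local model of one strand plus a meridian, independently of the ambient $(M,\mathcal{L},U^{\blacktriangle})$. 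Without such a locality principle your construction yields a series $\theta$ depending a priori on all of $(M,\mathcal{L},U^{\blacktriangle})$, i.e.\ it proves the lemma with the word ``universal'' deleted --- but universality is precisely the content of the statement, and precisely what the paper defers to \cite{PSPCS}. Your honest flagging of this as ``the principal technical obstacle'' is correct; the point is that it is not an obstacle your cited tools can remove.
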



Let $Skein(M)[[g_s,a]]^+$ be the set of formal power series with coefficients pairs $ (\mathcal{L}_+, U^{\blacktriangle} )$ modulo the relations 
$$
 \beta (\mathcal{L}_+, U^{\blacktriangle}_+ )-  \beta^{-1} (\mathcal{L}_-, U^{\blacktriangle}_- ) = A (\mathcal{L}_0, U^{\blacktriangle}_0 ), \thickspace
(\mathcal{L}^{+1}, U^{\blacktriangle}_{twist} ) = \alpha(g_s,a) (\mathcal{L}_+, U^{\blacktriangle}_{twist} ), \thickspace $$
$$ (\mathcal{L}_{unknot}, U^{\blacktriangle}_{unknot} ) = r(g_s, a),  
 (\mathcal{L}, U^{\blacktriangle} + C_{trivial}) =  \theta(g_s ,a ) (\mathcal{L}, U^{\blacktriangle}).
$$
From the above Lemmas we obtain a map
$$ Skein(M)[[g_s,a]]^+  \rightarrow MCH(M)^{\ddagger} .$$
$$  (\mathcal{L}, U^{\blacktriangle} ) \mapsto  Z_{ (\mathcal{L}, U^{\blacktriangle} )}$$
compatible with  isotopies. The map is injective but not surjective. However any element of $MCH(M)^{\ddagger} $ is isotopic to an element of the image, with isotopy arbitrary small.

\section{Open Gromov-Witten Partition Function}

For simplicity we consider the case of the trivial flat connection with gauge group $U(N)$. We denote by $ \mathfrak{g}= h(N) $ its  Lie algebra.

Consider the pairing $\langle A,B \rangle = \text{tr}(AB)$ on $h(N) \times h(N)$.  Let $\mathbf{Id} \in h(N) \times h(N)$ the dual tensor to $\text{tr}$.

Let $\{ X_k \}_k$ be a basis of $\mathfrak{g}$. Let $\{ X_k' \}_k$ be the dual basis of $\{ X_k \}_k$, i.e.,  the following identity holds for each $A,B \in \mathfrak{g}$
$$  \sum_k \text{tr}(AX_k) \text{tr}(BX_k') = \text{tr}(AB) .$$

Let $ \mathbf{Id} \in \mathfrak{g} \otimes \mathfrak{g}$ be the dual tensor of $Tr$. Using a  basis of $\mathfrak{g}$,  we have $ \mathbf{Id} = \sum_k X_k \otimes X_k'$.
Define the non-abelian propagator $P^{not-ab} $ as 
$$ P^{not-ab} = P \otimes \mathbf{Id} \in \Omega^2(Conf_2(M)) \otimes \text{Sym} (\pi_1^*(\mathfrak{g}) \oplus \pi_2^*(\mathfrak{g})) ,$$ 
where $P$ is the abelian propagator used in \cite{QME}. 
We have $(\alpha_i ,\beta_i)_i \in Omega^*(M)$ closed, such that $([\alpha_i] ,[\beta_i] )_i$ is a  symplectic basis of $H^*(M)$ and
$d  P^{not-ab} = \{  x_i^k \alpha_i X_k, y_i^k  \beta_i X_k' \}$, 
$$d  P^{not-ab} = \sum_i (\alpha_i \otimes \beta_i + \beta_i \otimes \alpha_i) \otimes \mathbf{Id} .$$

We introduce formal variables $x ,y$ with values in $H^{odd}(M, \R) \otimes \mathfrak{g} $ and  $H^{even}(M, \R) \otimes \mathfrak{g} $. 
We may write  $x ,y$ as a collection of formal variables  $x_i^k, y_i^k$ dual to $\alpha_i \otimes X_k, \beta_i \otimes X_k$.

$P^{not-ab}$ is anti-invariant under the switch isomorphism
\begin{equation} \label{reverse-propagator}
 sw^* (P^{not-ab}) = - P^{not-ab} .
\end{equation}

Let $G \in \mathfrak{G}$. For each $h \in H(G)$ denote by $\mathfrak{g}_h$ a copy of the Lie algebra $\mathfrak{g}$.


Define 
\begin{equation} \label{trace}
\text{Tr}_V : \text{Sym} (\bigoplus_{h \in H(G)} \mathfrak{g}_h)\rightarrow \C
\end{equation}
as  
$$ \text{Tr}_V(\otimes_{s \in S} X_{s}) =0$$
if $X_s \in  \mathfrak{g}_{h(s)}$ and $h: S \rightarrow H$ is not a bijection, and
$$ \text{Tr}_V(\otimes_{h \in H} X_h) = \prod_{v \in V} \text{Tr}(\prod_{h \in H_v}^{cyclic}X_h)  $$
where, for each $v$, $\{ X_h \}_{h \in H_v}$ in the argument of $\text{Tr}$ is ordered respecting the cyclic order of $H_v$.





For $e \in E(G) \setminus E_l$ let $Conf_e(M)$  be he compactification of the configuration space of two points labeled by the half-edges of $e$. Define
$$Conf_{G,m}(M)= M^{H_l}   \otimes  \bigotimes_{e \in E(G) \setminus E_l}Conf_e(M)  .$$
We have projections
$$ \pi_e : Conf_{G,m}(M)  \rightarrow  Conf_e(M) \text{    for   } e \in E^{in}(G) \setminus E_l,$$
$$ \pi_e : Conf_{G,m}(M)  \rightarrow  M^e \text{    for   } e \in E_l \setminus E^{ex},$$
$$ \pi_e : Conf_{G,m}(M)  \rightarrow  M \text{    for   } e \in E^{ex}(G) .$$

For $e \in E(G) \setminus E_l$, 
$$ \pi_e^*(P) \in \Omega^2(Conf_{G,m}(M))\otimes ( \mathfrak{g}_h \otimes \mathfrak{g}_{h'})  \otimes  \mathfrak{o}(e)  .$$

Assume that
\begin{equation} \label{condition-m}
m=\{ E_0,E_1,...,E_l  \} \text{   with     }|E_i|= |E_{i-1}| +1. 
\end{equation}
 Let $e_i \in E(G)$ such that $E_i= E_{i-1} \sqcup \{ e_i \}$. We have
\begin{multline} \label{omega-graphs0}
\bigwedge_{e \in E(G) \setminus E_l} \pi_e^*(P^{not-ab}) \wedge \bigwedge_{i} \pi_{e_i}^*(dP^{not-ab}) \wedge \bigwedge_{e \in E^{ext}(G)}  \pi_e^*( \sum_{i,k} x_i^k \alpha_i X_k) ) \\
\in \R[x] \otimes \Omega^*(M^{H(G)}) \otimes    \text{Sym} (\bigoplus_{h \in H(G)} \mathfrak{g}_h)   \otimes \mathfrak{o}(H(G)) .
\end{multline}
Applying  the trace (\ref{trace})  to this expression, we define
\begin{equation} \label{omega-graphs}
    \Omega_{G,m} :=  \text{Tr}_V ( \text{ expression  (\ref{omega-graphs0}) }  )\in   \R[x] \otimes \Omega^*(M^{H(G)}) \otimes \mathfrak{o}(H(G)).
\end{equation}


\begin{remark} \label{blow-up-chain}
A chain transversal to  the Diagonals associated to $E(G) \setminus E_l$ defines (up to triangulation) a chain on $Conf_{G,m}(M)$. In particular the chain $Z_{G,m}$ defines a chain on $Conf_{G,m}(M)$ with coefficients $\mathfrak{o}(H(G))$ , which we still denote by $Z_{G,m}$.  The extra boundary term of $Z_{G,m}$ coming from $\partial Conf_e(M)$ corresponds to $\delta_{e} Z_{G,m}$. 
\end{remark}


According to remark ( \ref{blow-up-chain}) it makes sense to integrate $\Omega_{G,m}$ on the chain $Z_{G,m}$. Denote by $ \langle   \Omega_{G,m} ,   Z_{G,m}  \rangle$ the result of this integration. Set
\begin{equation} \label{partition-definition}
    \mathfrak{P} (Z) :=  \sum_{G,m}  g_s^{- \chi(G)}  N^{|D(G)|} \langle   \Omega_{G,m} ,   Z_{G,m}  \rangle .
\end{equation}
If $Z \in \mathcal{Z}_{\beta}$ we have
$$ \mathfrak{P}(Z) \in  \frac{1}{g_s^{N_{\beta}}} \R[[g_s,x]] ,$$
with $N_{\beta}$ integer depending on $\beta$.

\begin{remark}
There is an important difference about  signs between this section and \cite{CS}. The reverse homomorphism (\ref{reverse-propagator})  has opposite compared to the one of \cite{CS}.  Related to this, in this section $Z_{G,m}$ is a chain oriented with local coefficients on $\mathfrak{o}(H(G))$ and
in formula (\ref{trace}) appears the symmetric product, instead in \cite{CS} the configuration space of the points is oriented in the usual sense and it is used the wedge product. 
\end{remark}

\begin{proposition}
For $B$  a $MC$-one chain
$$ \mathfrak{P} ( \hat{\partial} B ) = g_s \Delta \mathfrak{P} (B) .$$
In particular  $ \mathfrak{P} (Z) =0$ if $[Z]=0$ in nice-MCH.
\end{proposition}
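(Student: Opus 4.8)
The plan is to expand $\hat{\partial} = \partial + \delta + \eth$ and to evaluate each of the three resulting pieces of $\mathfrak{P}(\hat{\partial}B)$ in (\ref{partition-definition}) by Stokes' theorem on the compactified configuration spaces. By Remark \ref{blow-up-chain} each $B_{G,m}$, being transversal to the diagonals, defines a chain on the manifold with corners $Conf_{G,m}(M)$, whose boundary is the genuine chain boundary together with the collision faces coming from the $\partial Conf_e(M)$. Thus Stokes gives, for each $(G,m)$ and with the single weight $g_s^{-\chi(G)}N^{|D(G)|}$,
$$\langle \Omega_{G,m}, \partial B_{G,m}\rangle = \langle d\Omega_{G,m}, B_{G,m}\rangle - \sum_{e \in E^{in}(G)\setminus E_l} \langle \Omega_{G,m}, (\text{collision face of } e)\rangle .$$
The whole proof is then a matter of matching the right-hand side against $\mathfrak{P}(\eth B)$, $\mathfrak{P}(\delta B)$ and the announced anomaly.

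First I would treat the term $\langle d\Omega_{G,m}, B_{G,m}\rangle$. Since $dP^{not-ab}$ is closed and the external factors $\alpha_i$ in (\ref{omega-graphs}) are closed, differentiating $\Omega_{G,m}$ amounts to replacing, one edge at a time, an unmarked propagator $\pi_e^*(P^{not-ab})$ by $\pi_e^*(dP^{not-ab})$, i.e.\ $d\Omega_{G,m} = \sum_{e} \pm\, \Omega_{G, m\sqcup\{e\}}$. Reindexing the sum over $(G,m)$ by the enlarged flag, these terms reassemble precisely into $\mathfrak{P}(\eth B)$, because $\eth$ is exactly the alternating sum over the positions of the flag $m$; the sign $(-1)^{d+1}$ built into the definition of $\eth$ is what makes the $d\Omega$-contribution cancel against $\mathfrak{P}(\eth B)$.

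Next I would analyze the collision faces. The face where the two endpoints of an edge $e$ come together is the sphere bundle of the normal diagonal, and integrating the restriction of $\pi_e^*(P^{not-ab})$ over the $S^2$-fiber contracts the two half-edges of $e$ through the tensor $\mathbf{Id}$, producing $\delta_e B_{G,m}$ on the configuration space of $\delta_e G$; this is the geometric content of Remark \ref{blow-up-chain}. Comparing with $(\delta C)_{G,m} = \sum_{\delta_{e'}(G',m') = (G,m)} \delta_{e'} C_{(G',m')}$, these reproduce $\mathfrak{P}(\delta B)$ up to the power of $g_s$ recorded by the change of $\chi$ under edge contraction. The faces that are \emph{not} absorbed by $\delta$ are exactly those in which the contraction glues a pair of external $x$-insertions (equivalently raises the genus/self-connects a component): there the fiber integral of $P^{not-ab}$ leaves the pairing dual to $\mathbf{Id}$ acting on two external legs, which is precisely the BV Laplacian $\Delta$, and the accompanying shift of $\chi$ in $g_s^{-\chi(G)}$ supplies the single factor $g_s$. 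Collecting everything yields $\mathfrak{P}(\hat{\partial}B) = g_s\,\Delta\,\mathfrak{P}(B)$.

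The hard part will be the precise identification and bookkeeping of the boundary faces: one must show that the hidden faces (simultaneous collision of three or more points, or collision of an external leg with a vertex) contribute nothing, which I expect to follow from the anti-invariance $sw^*(P^{not-ab}) = -P^{not-ab}$ of (\ref{reverse-propagator}) together with the orientation local system $\mathfrak{o}(H(G))$ and the cyclic trace $\text{Tr}_V$, and that the $\mathbf{Id}$-contraction from the fiber integral is genuinely the pairing entering $\Delta$; the finiteness of each $\mathfrak{G}(\beta,\kappa)$ guarantees that all the sums make sense order by order in $g_s$. For the final assertion, if $[Z]=0$ in nice-$MCH$ then $Z = \hat{\partial}B$ for a nice $MC$-one chain $B$, whence $\mathfrak{P}(Z) = g_s\,\Delta\,\mathfrak{P}(B)$; since $\mathfrak{P}(B)\in\R[[g_s,x]]$ depends only on the variables $x$ while $\Delta$ differentiates in the conjugate pairs $(x,y)$, we get $\Delta\,\mathfrak{P}(B)=0$ and therefore $\mathfrak{P}(Z)=0$.
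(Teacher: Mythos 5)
Your Stokes-theorem skeleton coincides with the paper's first step, and you are right that the collision faces of $Conf_{G,m}(M)$ account for $\mathfrak{P}(\delta B)$. But the mechanism you propose for producing the right-hand side $g_s\Delta\mathfrak{P}(B)$ is not available in this formalism, and this is a genuine gap rather than a presentational difference. The compactified space $Conf_{G,m}(M)= M^{H_l}\otimes\bigotimes_{e\in E(G)\setminus E_l}Conf_e(M)$ is blown up \emph{only} along the diagonals of the internal, non-cut edges, i.e.\ exactly the edges carrying a propagator; consequently \emph{all} of its boundary faces are absorbed into $\mathfrak{P}(\delta B)$ (note that $\delta_e$ already contains the self-connecting, genus-raising case, by the definition of $\delta_e G$ when both half-edges of $e$ lie on the same component or the same vertex). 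External legs carry the smooth closed forms $\sum_{i,k}x_i^k\alpha_i X_k$, not propagators, and no compactification is performed at them, so there is no boundary stratum where ``a pair of external $x$-insertions'' collides: the source you assign to $\Delta$ does not exist. In the paper the appearance of $\Delta$ is algebraic, not geometric, and it is the entire raison d'\^etre of the flag data $m=\{E_0\subset\cdots\subset E_l\}$ and of the operator $\eth$: one has the two identities $d\Omega_{G,m}=\sum_{m'\mid\partial_{l+1}m'=m}\Omega_{G,m'}$ and $\Delta\Omega_{G,m}=\sum_{m'\mid\partial_0 m'=m}\Omega_{G,m'}$, the second because $\Delta$ contracts the two $x$-insertions on a cut edge of $E_0$ into the kernel $dP^{not-ab}=\sum_i(\alpha_i\otimes\beta_i+\beta_i\otimes\alpha_i)\otimes\mathbf{Id}$. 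Accordingly, your claim that the $d\Omega$-term cancels against \emph{all} of $\mathfrak{P}(\eth B)$ is false: inside $\mathfrak{P}(\eth B)$ the interior positions $0<i<l$ cancel pairwise among themselves (switching $e_i$ with $e_{i+1}$ gives $B_{G,\partial_i m}=B_{G,\partial_i m'}$ while $\Omega_{G,m}=-\Omega_{G,m'}$), the top position cancels the $d\Omega$-term coming from Stokes, and the bottom position $i=0$ is precisely what survives and equals $g_s\Delta\mathfrak{P}(B)$, the factor $g_s$ coming from the change of $\chi$ under the cut-edge identification. Your bookkeeping leaves the $i=0$ terms of $\eth$ unmatched.

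There is also an internal inconsistency at the end: you assert $\Delta\mathfrak{P}(B)=0$ because $\mathfrak{P}(B)$ depends only on $x$ while $\Delta$ differentiates in conjugate pairs $(x,y)$. If that were literally true, the identity you are proving would collapse to $\mathfrak{P}(\hat{\partial}B)=0$, and the $\Delta$-contribution you extracted from boundary faces would have to vanish identically, contradicting your own second step. In fact $\Delta$ here is the odd (BV) Laplacian attached to the Poincar\'e pairing acting on the $x$-variables themselves; this is what makes the identity $\Delta\Omega_{G,m}=\sum_{m'\mid\partial_0 m'=m}\Omega_{G,m'}$, and the quantum master equation later in the paper, non-trivial, so the final clause cannot be disposed of by declaring $\Delta\mathfrak{P}(B)$ to vanish for degree reasons.
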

\begin{proof}
$$d \langle  \Omega_{G,m} ,  \partial  B_{G,m}  \rangle = \langle  d \Omega_{G,m} ,   B_{G,m}  \rangle + \sum_{e \in E^{in}(G) \setminus E_l} \langle  \Omega_{\delta_e G,m} ,  \delta_e B_{G,m}  \rangle $$
where the last term comes from the boundary of $Conf_G(M)$.

For $0< i < l$, define $m'$ from $m$ switching  $e_i$ with $e_{i+1}$. We have $B_{G, \partial_i m}  =  B_{G, \partial_i m'} $ and $\Omega_{G,m}= - \Omega_{G,m'} $. Thus   
$$\langle  \Omega_{G,m} ,   B_{G, \partial_i m}  \rangle  +  \langle  \Omega_{G,m'} ,    B_{G, \partial_i m'}  \rangle =0 .$$

We use the following two identities
$$ \Delta   \Omega_{G,m}  = \sum_{m' | \partial_0 m' =m }  \Omega_{G,m'}  $$
$$ d   \Omega_{G,m}  = \sum_{m' | \partial_{l+1} m' =m }  \Omega_{G,m'}  .$$

Adding the above identities  over all the graphs $(G,m)$ the proposition follows.


\end{proof}


As stated in Theorem \ref{main-theorem}, the Open Gromov-Witten  $MC$-cycle is defined up to isotopy. 

The following Proposition can be proved as the last Proposition.
\begin{proposition}
To an isotopy $ \tilde{Z} = (\tilde{Z}_{G,m})_{G,m}$ of  $MC$-cycles it is associated 
$\mathfrak{P} (\tilde{Z})$ which satisfies the $QME$:
\begin{equation} \label{master-equation}
    d_t \mathfrak{P} (\tilde{Z}) + g_s \Delta \mathfrak{P} (\tilde{Z}) =0
\end{equation}
\end{proposition}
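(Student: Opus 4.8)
The plan is to rerun the computation of the preceding Proposition, the only new ingredient being the extra $\R_t$ factor carried by the chains of a one-parameter family. For an isotopy $\tilde Z\in\tilde{\mathcal Z}_0$ each component $\tilde Z_{G,m}$ is a chain on $\R_t\times M^{H(G)}$, and $\mathfrak P(\tilde Z)$ is defined by integrating the forms $\Omega_{G,m}$ — pulled back along the projection $\R_t\times M^{H(G)}\to M^{H(G)}$, so that they carry no $dt$ — against the $\tilde Z_{G,m}$ and pushing the result forward to $\R_t$. The outcome is an inhomogeneous form on $\R_t$ valued in $\R[[g_s,x]]$, whose $0$-form part records the slice partition functions and whose $dt$-part records the variation along the isotopy. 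The computation rests, exactly as before, on the Stokes identity
$$ d\,\langle \Omega_{G,m}, \partial \tilde Z_{G,m}\rangle = \langle d\Omega_{G,m}, \tilde Z_{G,m}\rangle + \sum_{e\in E^{in}(G)\setminus E_l}\langle \Omega_{\delta_e G,m}, \delta_e \tilde Z_{G,m}\rangle, $$
together with the two identities $\Delta\Omega_{G,m}=\sum_{m'\mid\partial_0 m'=m}\Omega_{G,m'}$ and $d\Omega_{G,m}=\sum_{m'\mid\partial_{l+1}m'=m}\Omega_{G,m'}$, all of which hold verbatim since $\Omega_{G,m}$ is unchanged.

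Next I would invoke that $\tilde Z$ is an isotopy, i.e.\ $\hat\partial\tilde Z=0$, hence $\partial\tilde Z_{G,m}=-(\delta+\eth)\tilde Z_{G,m}$, and reassemble the identities summed over $(G,m)$ with the weights $g_s^{-\chi(G)}N^{|D(G)|}$ of the definition of $\mathfrak P$. The part of the argument concerning the $M^{H(G)}$-directional faces is identical to the closed case: the $Conf_e$-boundary terms pair with the $\delta$-contractions, the flag antisymmetry $\Omega_{G,m}=-\Omega_{G,m'}$ under the transposition $e_i\leftrightarrow e_{i+1}$ kills the interior diagonal faces, the $\eth$-terms cancel in the same way, and what survives organizes into $g_s\Delta\,\mathfrak P(\tilde Z)$. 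The genuinely new contribution comes from the $\R_t$-direction: the fiberwise pushforward to $\R_t$ does not commute with the $M^{H(G)}$-directional boundary, and the resulting defect assembles, across all $(G,m)$, into $d_t\,\mathfrak P(\tilde Z)$. Since the total left-hand sides are $d$-exact and so contribute nothing after summation, one is left with $d_t\mathfrak P(\tilde Z)+g_s\Delta\mathfrak P(\tilde Z)=0$.

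The step I expect to require the most care is the bookkeeping of the $dt$-factor and its sign. Concretely, one must check that the pushforward to $\R_t$ of the interplay between $\partial$ and fiber integration produces exactly one copy of $d_t\mathfrak P(\tilde Z)$, carrying the correct sign relative to $g_s\Delta\mathfrak P(\tilde Z)$, and that the constant-loop boundary faces $N_0$ — handled $\mathbb{S}^1$-equivariantly through Remark \ref{orbitfold} and the forgetful-compatibility conventions of Section \ref{forget-MC-section} — contribute no spurious terms in the $t$-direction. Once the orientation conventions on $\R_t\times M^{H(G)}$ are fixed so that the $M$-directional computation reproduces the signs of the closed case, the remaining manipulations are verbatim those already carried out, and the Proposition follows.
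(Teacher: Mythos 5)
Your proposal is correct and follows essentially the same route as the paper: the paper's own proof is literally the remark that the statement "can be proved as the last Proposition," i.e., by rerunning the Stokes-plus-cancellation argument (configuration-space boundary terms pairing with $\delta_e$, flag antisymmetry killing interior $\partial_i m$ terms, and the identities for $\Delta\Omega_{G,m}$ and $d\Omega_{G,m}$) in the one-parameter setting, with the $d_t$-term arising from the $\R_t$-direction exactly as you describe. Your added care about fiber integration over $\R_t$ and the constant-loop faces only makes explicit what the paper leaves implicit.
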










\subsection{Factorization Property}

The Factorization property in the not abelian context may be addressed analogously to what is done in \cite{QME}.

For $\beta_1, \beta_2 \in \Gamma$, set 
$$\mathfrak{G}_{l}(\beta_1,\beta_2) = \mathfrak{G}_{ l}(\beta_1) \times \mathfrak{G}_{ l}(\beta_2),$$ 
and $\mathfrak{G}_*(\beta_1,\beta_2)= \sqcup_l \mathfrak{G}_l(\beta_1,\beta_2)$. 
The operator $\delta_e$ extends straightforwardly to $\mathfrak{G}_*(\beta_1,\beta_2)$.


We  can define an analogous of $MC$-chain complex using the decorated graphs  $\mathfrak{G}_*(\beta_1,\beta_2)$ instead of $\mathfrak{G}_*(\beta)$:
let $\mathcal{C}_{\beta_1,\beta_2}$ be the set of collections of chains 
$$\{ C_{(G^1,m^1), (G^2, m^2)} \}_{((G^1,m^1), (G^2, m^2)) \in \mathfrak{G}(\beta_1,\beta_2)}.$$
 The operator $\hat{\partial}$ and the forgetful compatibility are extended straightforwardly to $\mathcal{C}_{\beta_1,\beta_2}$.   
 Denote by $\mathcal{Z}_{\beta_1,\beta_2}$  the corresponding vector space of $MC$-cycles. 


We  consider two  operations:
\begin{itemize}
\item The factorization map
\begin{equation} \label{restriction}
 \mathbf{fact}_{\beta_1,\beta_2} :  \mathcal{C}_{\beta} \rightarrow  \mathcal{C}_{(\beta_1,\beta_2)}.
\end{equation}
given by 
\begin{equation} \label{fact-map}
 \mathbf{fact}_{\beta_1,\beta_2}(C)  ((G_1,\{ E_{i,1}  \}_{0 \leq i \leq l}) , (G_2, \{ E_{i,2}  \}_{0 \leq i \leq l})) := C(G_1 \sqcup G_2,\{ E_{i,1} \sqcup E_{i,2} \}_{0 \leq i \leq l})
\end{equation}
$$    \mathbf{fact}_{\beta_1,\beta_2}(C)_ {( G^1, m^1), ( G^2,m^2  ) }  := C_{(G_1 \sqcup G_2,  m^1 \sqcup m^2) }. $$


\item The  product of $MC$-chains:
$$   \boxtimes : \mathcal{C}_{\beta_1}  \times  \mathcal{C}_{\beta_2} \rightarrow \mathcal{C}_{\beta_1,\beta_2} ,$$
\begin{equation} \label{cup-formula}
(C^1\boxtimes C^2)_{(G^1, m^1) , (G^2, m^2) } := \sum_{0 \leq r \leq l}  C^1_{(G^1, m^1_{[0,r]})} \times  C^2_{(G^2, m^2_{[r,l]})}. 
\end{equation}
Here we have used the notation $m_{[a,b]} := \{ E_i \}_{a \leq i \leq b}$, if $m = \{ E_i \}_{0 \leq i \leq l}$ and $0 \leq a \leq b \leq l$.

\end{itemize}

It is easy to check  that $  \boxtimes$  is compatible with $\hat{\partial} $:
$$  \hat{\partial}  (C^1\boxtimes C^2) =  \hat{\partial} C^1 \boxtimes C^2  + C^1 \boxtimes  \hat{\partial} C^2  .$$
Hence (\ref{cup-formula}) induces  a product in $MCH(M)^{\diamond}$
\begin{equation} \label{product-nice-homology}
    MCH(M, \beta_1)^{\diamond} \boxtimes MCH(M, \beta_2)^{\diamond} \rightarrow MCH(M, \beta_1 + \beta_2)^{\diamond} .
\end{equation}
It is easy to check that (\ref{product-nice-homology})  is commutative up to sign.

Fix $Z_{Ann0} $. We say that a collection of nice multi-curve homology classes  $([Z_{\beta}])_{\beta}$ with $Z_{\beta} \in \mathcal{Z}_{\partial \beta, w^{ann}}$ satisfies the factorization property if 
$$  \mathbf{fact}_{\beta_1,\beta_2}([Z_{\beta_1 + \beta_2}]) = [Z_{\beta_1}] \boxtimes [Z_{\beta_2}] $$
for each $\beta_1 , \beta_2  \in H_2(X, L) $.

\begin{proposition} (\cite{QME})
 To the moduli space of multicurves  we can associate a collection of nice multi-curve cycles  $(Z_{\beta})_{\beta}$ with $Z_{\beta} \in \mathcal{Z}_{ \beta}$ which satisfies the factorization property. $(Z_{\beta})_{\beta}$ is well defined up to isotopy. 
\end{proposition}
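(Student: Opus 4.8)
The plan is to construct the collection $(Z_\beta)_\beta$ by induction on the symplectic area $\omega(\beta)$, arranging at each step that the data defining $Z_\beta$ restricts, on the locus of graphs that split by charge, to the product of the data already fixed at smaller area. The geometric input is that the moduli space $\mathcal{M}_\beta$ of multi-curves of class $\beta$ contains the configurations that are disjoint unions of a charge-$\beta_1$ multi-curve and a charge-$\beta_2$ multi-curve, and on this \emph{split locus} it is canonically identified with the product $\mathcal{M}_{\beta_1}\times\mathcal{M}_{\beta_2}$ whenever $\beta=\beta_1+\beta_2$. Since the factorization map $\mathbf{fact}_{\beta_1,\beta_2}$ of \eqref{fact-map} reads off precisely the value of a chain on disjoint-union graphs $G_1\sqcup G_2$, matching the perturbation on the split locus with the product of the two individual perturbations is exactly what will force the identity at the chain level.

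First I would set up the induction. In the base case $\omega(\beta)$ is minimal, there are no nontrivial decompositions, and $Z_\beta$ is produced by Theorem \ref{main-theorem} and made nice by Proposition \ref{nonice-nice}. For the inductive step, fix $\beta$; for every decomposition $\beta=\beta_1+\beta_2$ with $\omega(\beta_i)<\omega(\beta)$ the product $Z_{\beta_1}\boxtimes Z_{\beta_2}\in\mathcal{Z}_{\beta_1,\beta_2}$ is already determined, and because $\boxtimes$ is a chain map it is $\hat\partial$-closed. I would choose the Kuranishi data on $\mathcal{M}_\beta$ so that its restriction to the split locus is the product of the data for $\beta_1$ and $\beta_2$. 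Transversality is not obstructed there: on a disjoint-union graph $G_1\sqcup G_2$ every internal edge lies inside a single factor, so transversality to $\cap_{e\in E'}\pi_e^{-1}(Diag)$ is just the product of the transversalities of the two factors — this is precisely the hypothesis that the families $\{N^0_v\}_v$ be transversal in the forgetful-compatibility set-up, applied separately to each factor. Having pinned down the data on the split locus, I would extend it over the remaining connected and mixed graphs by the extension Lemma \ref{nice-extension} (using the $H(G)=\emptyset$ argument of Lemma \ref{basic-cycle} where needed), achieving global $\hat\partial$-closedness and transversality while leaving the split-locus chains untouched.

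For this compatible choice the factorization identity $\mathbf{fact}_{\beta_1,\beta_2}(Z_{\beta_1+\beta_2})=Z_{\beta_1}\boxtimes Z_{\beta_2}$ then holds on the nose in $\mathcal{Z}_{\beta_1,\beta_2}$; the combinatorial sum $\sum_{0\le r\le l}$ of \eqref{cup-formula} is produced automatically, being the same splitting of the simplicial $m$-parameter that already governs the forgetful-compatibility structure, so no extra choice enters. Since both $\mathbf{fact}_{\beta_1,\beta_2}$ and $\boxtimes$ are chain maps and respect isotopy, the corresponding identity of nice homology classes then holds for \emph{any} choices, not just the compatible one.

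Finally, for well-definedness up to isotopy I would run the one-parameter version of the same construction. Given two factorization-compatible collections, Theorem \ref{main-theorem} supplies isotopies $\tilde Z_\beta$, which I would make factorization-compatible by induction on $\omega(\beta)$: prescribe $\tilde Z_{\beta_1}\boxtimes\tilde Z_{\beta_2}$ on the split locus and extend by the isotopy-extension Lemma \ref{nice-extension-isotopy}, transporting across the split stratum by Lemma \ref{deformation-cycles}. The main obstacle, flagged in the introduction, is exactly this coherence of perturbations: a global transversality perturbation does not respect the product decomposition of graphs, so one cannot take products everywhere. The resolution is to constrain the perturbation only on the split loci — where product transversality is automatic because edges never cross factors — and to absorb all non-product behaviour into the connected graphs through the extension lemmas. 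On the graphs with $H(G)=\emptyset$, where genuine obstructions live, factorization is unobstructed because $\mathfrak{L}_{G_1\sqcup G_2}(M)=\mathfrak{L}_{G_1}(M)\times\mathfrak{L}_{G_2}(M)$ and a product of null-homologous zero-cycles is null-homologous.
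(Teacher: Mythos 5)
A preliminary remark: the paper states this proposition with the citation \cite{QME} and gives no proof here at all --- the construction is deferred entirely to that reference. So there is no in-paper argument to compare yours against; what follows assesses your proposal on its own terms.

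There is a genuine gap, and it sits exactly where the introduction warns that ``transversality destroys the product structure of the graphs''. Your induction is driven by the claim that on the split locus the product Kuranishi data is admissible because, for a disjoint-union graph $G_1\sqcup G_2$, every internal edge lies inside a single factor, so all diagonal-transversality requirements factor. This overlooks two coupled features of the $MC$-complex. First, by the cut-compatibility requirement $C_{G,m}=C_{cut_{E_0}G,m}$, the chains attached to disjoint-union graphs with external half-edges are literally equal to the chains attached to charge-$\beta$ graphs in which those half-edges are joined into internal edges \emph{crossing} between the charge-$\beta_1$ and charge-$\beta_2$ parts; for filtrations $m$ in which such a crossing edge $e$ does not lie in the top level, the transversality requirement demands transversality to the crossing diagonal $\pi_e^{-1}(Diag)$, a joint condition on the two factors that a product chain need not satisfy. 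Second, and fatally, your induction cannot avoid decompositions with $\beta_1=\beta_2$, or more generally split configurations in which the two halves share curve components. There the prescription ``the restriction of the Kuranishi data to the split locus is the product of the data'' is not even well posed: the two factors carry the same moduli problem, the product of a chain with itself meets every crossing diagonal along its own diagonal and can never be made transverse to it by a perturbation that is the same in both factors, and $Aut$-equivariance of the chains forbids breaking the symmetry naively. Overcoming this is the entire point of the point-splitting formalism: the two copies must be perturbed differently, the resulting chains are not products, and the discrepancy is recorded by the interpolating chains indexed by the filtrations $m$ --- this is what the shuffle sum $\sum_{0\le r\le l}$ in (\ref{cup-formula}) encodes, and it is not, as you assert, ``produced automatically'' by the forgetful-compatibility structure. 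Consequently factorization can only be established as an identity of nice homology classes, by constructing those correcting chains and isotopies; your argument, which tries to force the identity at chain level by a compatible choice of perturbations and then extend, collapses at precisely this step. The closing remark that obstructions on graphs with $H(G)=\emptyset$ vanish because products of null-homologous cycles are null-homologous does not repair this, since the failure above occurs on graphs with half-edges.
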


\begin{lemma} \label{factorization-partition-function-lemma}
$$\mathfrak{P}(Z^1 \boxtimes Z^2) = \mathfrak{P} (Z^1) \times  \mathfrak{P} (Z^2).$$
\end{lemma}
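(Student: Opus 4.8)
The plan is to expand both sides as sums over decorated graphs and to match them, using that a pair $((G^1,m^1),(G^2,m^2))\in\mathfrak{G}(\beta_1,\beta_2)$ is evaluated by $\mathfrak{P}$ through the disjoint union $G^1\sqcup G^2\in\mathfrak{G}(\beta_1+\beta_2)$. Three factorizations drive the argument: the numerical weight $g_s^{-\chi}N^{|D|}$ in (\ref{partition-definition}) is multiplicative under $\sqcup$ because $\chi(G^1\sqcup G^2)=\chi(G^1)+\chi(G^2)$ and $|D(G^1\sqcup G^2)|=|D(G^1)|+|D(G^2)|$; the integrand $\Omega$ factorizes as a wedge; and the pairing $\langle\,\cdot\,,\,\cdot\,\rangle$ factorizes by Fubini. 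Note that this is a chain-level identity and does not use that $Z^1,Z^2$ are cycles; the remaining work is the reconciliation of the simplicial marking data, where the real content lies.

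First I would establish the factorization of the integrand. Since $G^1\sqcup G^2$ carries no edge joining the two summands, $H(G^1\sqcup G^2)=H(G^1)\sqcup H(G^2)$, $E(G^1\sqcup G^2)=E(G^1)\sqcup E(G^2)$, and $Conf_{G^1\sqcup G^2,m}(M)=Conf_{G^1,m^1}(M)\times Conf_{G^2,m^2}(M)$. Each propagator factor $\pi_e^*(P^{not-ab})$ and each $\pi_{e_i}^*(dP^{not-ab})$ in (\ref{omega-graphs0}), as well as each external insertion, depends only on the coordinates of the component containing $e$; and the trace (\ref{trace}) splits as a product $\prod_{v\in V(G^1)}\text{Tr}(\cdots)\cdot\prod_{v\in V(G^2)}\text{Tr}(\cdots)$ because the half-edges feeding any given vertex all lie in one summand. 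Hence $\Omega_{G^1\sqcup G^2,m}=\pm\,\Omega_{G^1,m^1}\wedge\Omega_{G^2,m^2}$ as elements of $\R[x]\otimes\Omega^*(M^{H(G^1)})\otimes\Omega^*(M^{H(G^2)})\otimes\mathfrak{o}(H(G^1\sqcup G^2))$, and combining this with Fubini over the product configuration space reduces each summand to a product of a $G^1$-pairing and a $G^2$-pairing.

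The combinatorial heart of the proof is to show that the sum over $r$ in the definition (\ref{cup-formula}) of $\boxtimes$, together with the combined-marking indexing (\ref{fact-map}) of $\mathbf{fact}$, reproduces the independent double sum $\sum_{m^1}\sum_{m^2}$ occurring in $\mathfrak{P}(Z^1)\,\mathfrak{P}(Z^2)$, each term exactly once. This is an Eilenberg--Zilber/prism statement: a marking of $G^1\sqcup G^2$ is an interleaving of a marking of $G^1$ with one of $G^2$, the parameter $r$ records how many descent steps have been spent on the first factor, and the $r$-sum is precisely the shuffle decomposition of the product of the two simplices. Fubini applied to the factorized form on such a shuffled chain yields the plain product of the two integrals, so that after summing over $r$ one recovers exactly $\langle\Omega_{G^1,m^1},Z^1_{G^1,m^1}\rangle\cdot\langle\Omega_{G^2,m^2},Z^2_{G^2,m^2}\rangle$.

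The hard part, which I expect to be the main obstacle, is the sign bookkeeping. One must check that the orientation local system splits, $\mathfrak{o}(H(G^1\sqcup G^2))\cong\mathfrak{o}(H(G^1))\otimes\mathfrak{o}(H(G^2))$ up to a sign absorbed into the chain; that the Koszul signs produced by reordering the wedge of the $P$- and $dP$-factors agree with the simplicial shuffle signs carried by the $r$-sum; and that the Fubini sign is consistent with both. As observed in the remark following (\ref{partition-definition}), the use of the symmetric product in (\ref{trace}) and of local-coefficient orientations on $Z_{G,m}$ (rather than the ordered wedge conventions of \cite{CS}) is exactly what makes these signs cancel; verifying this compatibility step by step is the only delicate point, the rest reducing to the formal factorizations above.
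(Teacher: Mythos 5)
Your first and last paragraphs are fine and consistent with the paper's proof: additivity of $\chi$ and $|D|$ under disjoint union, the splitting of the trace (\ref{trace}) over vertices, the factorization of the form (\ref{omega-graphs}) over the two components, Fubini on the product configuration space, and the sign conventions. The gap is exactly in what you call the combinatorial heart, and it is a genuine one. The $r$-sum in (\ref{cup-formula}) is an Alexander--Whitney (front-face/back-face) formula, not ``the shuffle decomposition of the product of the two simplices'', and for a fixed pair $((G^1,m^1),(G^2,m^2))$ it is \emph{not} true that ``after summing over $r$ one recovers exactly'' the product of the two pairings. What actually happens --- and this single observation is the entire content of the paper's proof --- is the opposite: for a fixed combined marking $m=m^1\sqcup m^2$ satisfying (\ref{condition-m}), there is \emph{at most one} $k$ for which both $m^1_{[0,k]}$ and $m^2_{[k,l]}$ are non-degenerate, and such a $k$ exists only when the interleaving is sorted, i.e. $e_i\in E(G^1)$ for $0<i\le k$ and $e_i\in E(G^2)$ for $k<i\le l$. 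That one term gives the product of pairings; every other term of the $r$-sum, and every non-sorted interleaving, pairs to zero against $\Omega_{G,m}$ (for $r$ different from the number of $G^1$-edges in $E_l$ this is a degree/dimension mismatch under Fubini; for $r$ equal to it but $m$ non-sorted, the restricted marking $m^1_{[0,r]}$ has repeated entries and the corresponding chain is degenerate). Your write-up never addresses degenerate restricted markings at all --- the word does not appear --- so this vanishing is neither stated nor proved.

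Without that vanishing your argument over-counts rather than proves the identity: a fixed pair of non-degenerate markings of lengths $l_1,l_2$ arises from $\binom{l_1+l_2}{l_1}$ distinct combined markings $m$ satisfying (\ref{condition-m}), each contributing $l+1$ terms through the $r$-sum, and you must show that exactly one of all these terms survives. This is precisely the combinatorial core of the standard fact that the Alexander--Whitney map composed after the Eilenberg--Zilber shuffle map is the identity on normalized chains, and the proof of that fact is the degeneracy argument just described; invoking ``Eilenberg--Zilber'' without it leaves the key claim (``each term exactly once'') unsupported, and indeed your phrasing suggests each shuffled marking contributes a full product term, which would give an answer too large by the shuffle count. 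Once the vanishing is in place, the rest of your outline (weights, trace splitting, Fubini, signs) does assemble into the paper's proof.
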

\begin{proof}
Observe that, if $m=m^1 \sqcup m^2 $ satisfies condition  (\ref{condition-m}) , there exists at most one $k$ such that $m^1_{[0,k]}$ and $m^2_{[k,l]}$ are both not degenerate, and in this case we have 
$e_i \in E(G^1)$ for $0 < i \leq k$, $e_i \in E(G^2)$ for $k < i \leq  l$.  
$$\langle  \Omega_{G,m} ,   (Z^1 \boxtimes Z^2)_{(G^1,m^1) \times (G^2,m^2)}  \rangle = \langle  \Omega_{G^1,m^1} ,   Z_{G^1,  m^1_{[0,k]}}  \rangle \times \langle  \Omega_{G^2,m^2} ,   Z_{G^2,  m^2_{[k,l]}}  \rangle. $$
From the last identity the Lemma follows.
\end{proof}



\subsection{Open Gromov-Witten Potential }

A closed component of a decorated graph is a component $c \in Comp(G)$ with $V_c = D_c = \emptyset$. The closed components are the generators of the Closed Gromov-Witten partition function. 
Open Gromov-Witten partition function is obtained quotient the Open-Closed partition function by the closed one. This corresponds to consider graphs without  closed  components. 
  

The Open Gromov-Witten potential is defined as
$$\mathfrak{W}(Z)  = g_s \sum_{(G,m) \text{  connected}}  g_s^{- \chi(G)}  N^{|D(G)|}  \langle  \Omega_{G,m} ,  Z_{G,m}  \rangle   \in \R[[g_s,x]] .$$
where the sum is made over the connected graphs  which are not closed components. 

Let $(Z_{\beta})_{\beta}$ a nice $MC$-cycle satisfying the factorization property.
Consider the Novikov Ring with formal variable $T$. Set 
$$  \mathfrak{P}((Z_{\beta})_{\beta}) = \sum_{\beta}  \mathfrak{P}(Z_{\beta}) T^{\omega(\beta)},$$
$$  \mathfrak{W}((Z_{\beta})_{\beta}) = \sum_{\beta}  \mathfrak{W}(Z_{\beta}) T^{\omega(\beta)}.$$

The factorization property and Lemma \ref{factorization-partition-function-lemma} imply that
the open Gromov-Witten partition function is the exponential of the open Gromov-Witten potential
$$ \mathfrak{P}(Z) = \text{exp} (\frac{1}{g_s}\mathfrak{W}(Z) ). $$

We can consider isotopies $\mathfrak{W} (\tilde Z)$ of the open Gromow-Witten potential.
From the master equation (\ref{master-equation}) we have 
$$   d  \mathfrak{W} (\tilde{Z})  + \frac{1}{2} \{ \mathfrak{W} (\tilde{Z}) ,  \mathfrak{W} (\tilde{Z})  \}  + g_s \Delta   \mathfrak{W} (\tilde{Z})  =0 .  $$


\subsection{Bulk Deformations}

 Bulk deformations can be included as in \cite{QME}. As in \cite{QME} we need to consider decorated graphs with internal punctures $\mathfrak{G}^{+}$ and use the corresponding  version of the $MC$-chain complex. 


  A decorated graph     $G^+ \in \mathfrak{G}^+$ consists in an array 
$$(Comp, (V_c, P_c , D_c, \beta_c ,g_c )_c, (H_v)_v, E)$$ 
where 
\begin{itemize}
\item for each $c \in Comp(G^+)$,  $P_c$ is a finite set, called internal punctures.
\end{itemize}
All the other data are like before.
Set $P(G^+) = \sqcup_c P_c$.


The $MC$-chain complex with bulk deformations $\mathcal{C}^+$  is defined using  collections of chains      $( C_{(G^+,m)})_{(G^+,m) \in \mathfrak{G}_*^+(\beta  ) }$ with $C_{G^+,m} \in C_*( L^{H(G^+)})$.

Define $\Omega_{G^+,m}$ using the same formula (\ref{omega-graphs}). To $MC$-cycle $Z= (Z_{G^+,m})_{G^+,m}$  with bulk deformations we associate its partition function
$$ \mathfrak{P} (Z) :=   \sum_{(G^+,m) } g_s^{- \chi(G)} \mathfrak{b}^{|P(G)|} N^{|D(G)|}  \langle   \Omega_{G^+,m} ,   Z_{G^+,m}  \rangle ,$$
where $\mathfrak{b}$ is a new formal variable weighting the number of internal punctures.
From the definition, $\mathfrak{P}( Z)$ admits an expansion of formal power series 
$$ \mathfrak{P}( Z) = \sum_i r_i g_s^{ k_i } \mathfrak{b}^{l_i} p_i(x)$$
where $k_i , l_i \rightarrow \infty$, $k_i + l_i+ N_{\beta} \geq 0$ and 
$p_i(x) \in \R[[x]] $.







\subsubsection{Open Gromov-Witten Partition Function}

Adapting the construction of \cite{MCH}, in \cite{QME} we constructed the Gromov-Witten  not abelian  $MC$-cycle $Z^{not-ab}=(Z_{G^+,m})_{G^+,m} $ with bulk deformations from the moduli space of multi curves with bulk deformations. 

Denote by $Z^{K,A}_{\beta}$ the not-abelian Open Gromov-Witten nice $MC$-cycle  with four chain $K$ and bulk deformation $A$. 
Set 
$$ \mathfrak{P} (\beta,  K, A)= \mathfrak{P} (Z^{K,A}_{\beta}) .$$
$ \mathfrak{P} (\beta, K, A)$ is well defined up to isotopy.

The identity
$$  \mathfrak{P} (\beta, K+ r A, A)  (g_s, \mathfrak{b}) =  \mathfrak{P} (\beta, K, A)  (g_s, \mathfrak{b} + r g_s ),  $$
is an immediate consequence of the construction of the Open Gromov-Witten $MC$-cycle with bulk deformations of \cite{QME}.
 This identity tells us that the bulk deformation can be considered as  a deformation of the four-chain $K$.

\end{document}